\documentclass[12pt]{article}
\usepackage{amsmath,amssymb,amsfonts,amsthm}
\usepackage{mathrsfs}

\usepackage{mathtools, nccmath, relsize}

\usepackage{indentfirst}
\usepackage[pdftex]{color,graphicx}

\usepackage{xcolor}
\definecolor{lightblue}{rgb}{0,0.2,0.5}
\usepackage[colorlinks=true, urlcolor=lightblue,linkcolor=lightblue, citecolor=lightblue]{hyperref}
\usepackage{ucs}

\definecolor{ForestGreen}{RGB}{34,139,34}
\definecolor{mauve}{rgb}{0.7,0,0.43}
\definecolor{dkgreen}{rgb}{0,0.6,0}
\definecolor{darkgreen}{rgb}{0,0.6,0}
\definecolor{darkorange}{rgb}{1.0, 0.55, 0.0}
\definecolor{lightblue}{rgb}{0,0.2,0.5}
\definecolor{blue1}{rgb}{0,0.1,0.9}
\usepackage{xcolor}

\definecolor{lightblue}{rgb}{0,0.2,0.5}
\usepackage[colorlinks=true, urlcolor=lightblue,linkcolor=lightblue, citecolor=lightblue]{hyperref}
\usepackage{ucs}

\definecolor{dblackcolor}{rgb}{0.0,0.0,0.0}
\definecolor{dbluecolor}{rgb}{0.01,0.02,0.7}
\definecolor{dgreencolor}{rgb}{0.2,0.4,0.0}
\definecolor{dgraycolor}{rgb}{0.30,0.3,0.30}

\definecolor{ForestGreen}{RGB}{34,139,34}
\definecolor{mauve}{rgb}{0.7,0,0.43}
\definecolor{dkgreen}{rgb}{0,0.6,0}
\definecolor{darkgreen}{rgb}{0,0.6,0}
\definecolor{darkorange}{rgb}{1.0, 0.55, 0.0}
\definecolor{lightblue}{rgb}{0,0.2,0.5}
\definecolor{blue1}{rgb}{0,0.1,0.9}

\definecolor{lightblue}{rgb}{0,0.2,0.5}

\usepackage{enumerate} %
\usepackage[numbers]{natbib} %
\usepackage{yfonts} %
\usepackage[title]{appendix} %

\usepackage{comment}
\usepackage[margin=0.25in]{geometry}
\usepackage{pgfplots,pgfplotstable}
\pgfplotsset{width=10cm,compat=1.9}
\usepackage[edges]{forest}

\usepackage{empheq} 

\usepackage{cases}

\usepackage{subcaption} 

\usepackage{xr} %

\externaldocument[Z-]{../branching/m} %

\usepackage{float} 

\newenvironment{Proof}{\removelastskip\par\medskip
\noindent{\em Proof.} \rm}{\penalty-20\null\hfill$\square$\par\medbreak}

\newenvironment{Proofy}{\removelastskip\par\medskip
\noindent{\em Proof} \rm}{\penalty-20\null\hfill$\square$\par\medbreak}

\allowdisplaybreaks

\makeatletter
\let\@fnsymbol\@arabic %
\makeatother

\oddsidemargin 0in
\evensidemargin 0in
\topmargin -0.6in
\textwidth 6.5in
\textheight 9in

\theoremstyle{plain}
\newtheorem{theorem}{Theorem}[section]
\newtheorem{proposition}[theorem]{Proposition}
\newtheorem{corollary}[theorem]{Corollary}
\newtheorem{lemma}[theorem]{Lemma}
\newtheorem{remark}[theorem]{\underline{Remark}}

\theoremstyle{remark}

\theoremstyle{definition}
\newtheorem{definition}[theorem]{Definition}
\newtheorem{assumption}{Assumption}

\newcommand{\alphastar}{\alpha} 
\newcommand{\jstar}{j}

\newcommand{\R}{\mathbb R}
\newcommand{\N}{\mathbb N}

\newcommand{\C}{\mathscr{C}}

\newcommand{\F}{\mathcal F}

\newcommand{\ind}{\mathbf 1}

\newcommand{\id}{\mathrm{Id}}
\newcommand{\M}{\mathcal M}
\newcommand{\pt}{\partial}

\def\real{{\mathord{\mathbb R}}}
\def\N{{\mathord{\mathbb N}}}

\def\P{\mathbb{P}}
\def\E{\mathbb{E}}

\makeatletter
\newcommand*\rel@kern[1]{\kern#1\dimexpr\macc@kerna}
\newcommand*\widebar[1]{%
  \begingroup
  \def\mathaccent##1##2{%
    \rel@kern{0.8}%
    \overline{\rel@kern{-0.8}\macc@nucleus\rel@kern{0.2}}%
    \rel@kern{-0.2}%
  }%
  \macc@depth\@ne
  \let\math@bgroup\@empty \let\math@egroup\macc@set@skewchar
  \mathsurround\z@ \frozen@everymath{\mathgroup\macc@group\relax}%
  \macc@set@skewchar\relax
  \let\mathaccentV\macc@nested@a
  \macc@nested@a\relax111{#1}%
  \endgroup
}
\makeatother

\makeatletter
\DeclareRobustCommand\widecheck[1]{{\mathpalette\@widecheck{#1}}}
\def\@widecheck#1#2{%
    \setbox\z@\hbox{\m@th$#1#2$}%
    \setbox\tw@\hbox{\m@th$#1%
       \widehat{%
          \vrule\@width\z@\@height\ht\z@
          \vrule\@height\z@\@width\wd\z@}$}%
    \dp\tw@-\ht\z@
    \@tempdima\ht\z@ \advance\@tempdima2\ht\tw@ \divide\@tempdima\thr@@
    \setbox\tw@\hbox{%
       \raise\@tempdima\hbox{\scalebox{1}[-1]{\lower\@tempdima\box
\tw@}}}%
    {\ooalign{\box\tw@ \cr \box\z@}}}
\makeatother

\numberwithin{equation}{section} %

\usepackage{hyphenat}
\usepackage{booktabs}
\usepackage{graphicx}

\hyphenation{di-men-sio-nal}

\hyphenation{func-tio-nals} 

\begin{document}

\title{
\huge
Stability analysis of a branching diffusion solver for semilinear heat equations %
} 

\author{
 Qiao Huang\footnote{
 School of Mathematics,
  Southeast University,
  Nanjing 211189,
  P.R. China. 
 \href{mailto:qiao.huang@seu.edu.cn}{qiao.huang@seu.edu.cn}}
 \qquad
 Nicolas Privault\footnote{%
 School of Physical and Mathematical Sciences, 
 Nanyang Technological University, 
 21 Nanyang Link, Singapore 637371.
 \href{mailto:nprivault@ntu.edu.sg}{nprivault@ntu.edu.sg}
}
}

\maketitle

\vspace{-0.5cm}

\begin{abstract} 
Stochastic branching algorithms provide a useful alternative to grid-based schemes for the numerical solution of partial differential equations, particularly in high\hyp{}dimensional settings. However, they require a strict control of the integrability of random functionals of branching processes in order to ensure the non-explosion of solutions. In this paper, we study the stability of a functional branching representation of PDE solutions by deriving sufficient criteria for the integrability of the multiplicative weighted progeny of stochastic branching processes. We also prove the uniqueness of mild solutions under uniform integrability assumptions on random functionals. 
\end{abstract}
\noindent\emph{Keywords}:~
Semilinear PDEs; 
branching processes; 
random trees; 
weighted progeny; 
multiplicative progeny;  
Monte Carlo method.

\noindent
    {\em Mathematics Subject Classification (2020):}
35K55, %
35K58, %
60H30, %
60J85, %
65C05. %

\baselineskip0.7cm

\section{Introduction}%
\noindent
 Consider the $d$-dimensional semi-linear heat equation
 \begin{equation}
   \label{nl-heat}
  \begin{cases}
  \displaystyle
  \frac{\partial u}{\partial t} (t, x)+
  \frac{1}{2} \Delta
  u(t, x) + f( u(t, x) ) =0, & (t,x)\in[0,T)\times\R^d,
      \medskip
      \\
    u(T, x) = \phi (x), \quad x\in\R^d, & 
  \end{cases}
\end{equation}
 where $f\in {\cal C}^\infty(\R^d)$ and $\phi :\R^d\to\R$ are given functions,
 and 
$$
 \frac{1}{2} \Delta = \frac{1}{2} \sum_{i=1}^d \pt_{x_i}^2
$$
is the generator of the
standard $d$-dimensional Brownian motion $B= ( B_t )_{t\geq 0}$.

\medskip
 
Probabilistic schemes for the numerical solution of
partial differential equations 
provide a promising direction to overcome the curse of
dimensionality. 
 In the linear case with $f(u(t, x)) = r u(t, x)$ and
 $r\in \real$ a constant, the solution of the PDE
$$
\begin{cases}
      \displaystyle
      \frac{\partial u}{\partial t}
      (t,x) + \frac{1}{2}\Delta u(t,x)
        + r u(t,x) = 0,
  \medskip
  \\
u(T,x) = \phi (x), \qquad (t,x) = (t,x_1, \ldots, x_d) \in [0,T] \times \real^d,
\end{cases}
$$
 is well known to admit the probabilistic representation
$$
    u(0, x) = e^{rT} \E [ \phi (x + W_T) ],
$$
where $(W_t)_{t \geq 0}$ is a standard Brownian motion.

\medskip

More generally, probabilistic representations for the solutions of
first and second order nonlinear PDEs %
 can be obtained
 by
 representing $u(t,x)$ as
 $u(t,x) =
 Y_{t,t}^x$, $(t,x)\in [0,T]\times \real$,
 where $(Y_{t,s}^x)_{t \leq s \leq T}$ is the
 solution of a backward stochastic differential equation (BSDE),
 see \cite{peng2}, \cite{pardouxpeng}.
 The BSDE method has been implemented in
 \cite{han2018solving} using a deep learning
 algorithm in the case where $f$ depends on the first order derivative,
 see also \cite{hure2019some}
 for recent improvements. 
 This method also extends to second order fully nonlinear PDEs %
 by the use of second order backward stochastic differential
 equations, see e.g. \cite{touzi}, \cite{soner}.

  \medskip

 Numerical solutions of semilinear PDEs have also
 been obtained by the multilevel Picard method
 \cite{hutzenthaler-mlp0},
 \cite{hutzenthaler-mlp2},
 \cite{hutzenthaler-mlp3},
 \cite{hutzenthaler-mlp1},
 \cite{NeufeldNguyenWu2025MLPGradient},
 \cite{HanHuLongZhao2026},
 \cite{Nguyen2026MLP},
 with numerical experiments provided in \cite{hutzenthaler-mlp4},
 see also 
 \cite{Neufeld2025RandomDeepSplitting}
 for the deep splitting method. 

 \medskip 

 Stochastic branching diffusion mechanisms
 \cite{sevastyanov},
 \cite{skorohodbranching},  \cite[Example~3.4]{inw}, represents an alternative to the BSDE method.
 See \cite{hpmckean}
 for an application to the Kolmogorov-Petrovskii-Piskunov (KPP) equation,
 \cite{chakraborty} for existence of solutions
 of parabolic PDEs with %
 power series nonlinearities, and \cite{henry-labordere2012}
 for the probabilistic representation of solutions of
 more general PDEs with polynomial nonlinearities. 
 A linear wave equation has been treated in  
 \cite{dalang}
 using Markov branching process, 
 and a family of 
 semilinear and higher-order hyperbolic PDEs has been considered in 
 \cite{labordere2}. 

 \medskip

 Probabilistic representations for the solutions of nonlinear
 PDEs with polynomial nonlinearities have also been obtained using 
 stochastic cascades, see 
 \cite{sznitman},
 \cite{ramirez}, 
 \cite{blomker},
 \cite{bakhtin}, 
 \cite{waymire},
 \cite{waymire1},
 \cite{waymire2},
 which provide numerical approximations of PDE solutions by
 series truncation. 
 Stochastic branching methods have also been applied
 in e.g. \cite{lm}, \cite{labordere} to polynomial gradient nonlinearities.  
  
\medskip

 In \cite{penent2022fully}, a stochastic branching method 
 that carries information on (functional) nonlinearities along a random tree
 has been introduced, with the aim of providing
 Monte Carlo schemes for the numerical solution of
 fully nonlinear PDEs with gradients of arbitrary orders.
 The performance of this scheme has been investigated in
 \cite{nguwipenentprivaultnv}, 
 \cite{burgers}, 
 \cite{nguwipenentprivault} 
 via numerical experiments. 
 
 \medskip

 However, the above references generally 
 assume the uniform integrability of 
 random functionals and/or the existence of a solution, see 
 \cite[Theorem~3.5]{labordere},
 \cite[Theorem~4.2]{penent2022fully}.  
 The goal of this paper is to carry out the stability analysis
of this numerical scheme by ensuring the
 (uniform) integrability 
of the random functionals involved in the algorithm, 
based on conditions on the
coefficients $f$ and $\phi$ appearing in the PDE~\eqref{nl-heat}. 
For this, we aim at controlling 
the explosion of the underlying branching process
in order to derive an interval of existence for the probabilistic
solution. 

\medskip

Although there exists a rich classical literature on random trees and
branching processes, see for example 
 \cite{kendall1948},
 \cite{Ott49},
 \cite{harris1963}, 
 \cite{athreya}, 
 \cite{brown-shubert}, 
 it focuses mainly on estimating 
 the progeny of processes. 
 However, in our setting we are dealing with random
 product functionals based on the derivatives
 of the equation coefficients $f$ and $\phi$,
 which requires us to analyse the multiplicative
 weighted progeny of branching processes. 

\medskip

We proceed by 
dominating the original branching process using a binary branching process, whose integrability is studied separately.
For this, we control the generating function
of the corresponding multiplicative weighted progeny
by showing that it satisfies a contact Hamilton--Jacobi
(HJ) equation. 
Although this equation cannot be solved explicitly, we show 
that its solution satisfies a simpler 
Hamilton--Jacobi equation admitting a closed-form solution,
from which we derive explicit bounds. 

\medskip

 In Theorems~\ref{thm-1} and \ref{main-visc}
 we respectively obtain probabilistic representations 
 for the classical, mild and viscosity
 solutions of the PDE~\eqref{nl-heat}
 under integrability conditions on random functionals.
 Those integrability properties are shown 
 to hold in Propositions~\ref{integ-final-1}-\ref{integ-final-2}
 under factorial and exponential growth assumptions 
 on the terminal condition $\phi$, 
 written in terms of the existence time $T$
 of solutions and on the derivatives 
$$ 
\| \pt^\alpha \phi  \|_\infty
  , \ 
\big\| \pt^\alpha [f^{(j)} (\phi )]\big\|_\infty,
 \quad j\geq 0. 
$$ 
 The proof of those results relies on 
 the control of the multiplicative weighted progeny of random
 binary trees
 and on stochastic dominance results
 derived Sections~\ref{s7} and \ref{s5}
 via the solution of a related Hamilton--Jacobi equation in
 Section~\ref{s7}.
 Our results also generalize the construction of
 \cite{huangprivault1} which has been
 applied to the Monte Carlo generation of ODE solutions
 in \cite{huangprivault2}. 

 \medskip 

 We proceed as follows.
After introducing the notation of relevant branching diffusion mechanism
in Subsections~\ref{sec-construction} and \ref{s-3}, 
 Subsection~\ref{s4} presents our main results
 Theorem~\ref{thm-1} and \ref{main-visc}
 on the probabilistic representation of the solutions
 of semilinear PDEs.
Sections~\ref{s9} and \ref{s9-2} are devoted to fully nonlinear examples
with explicit solutions and numerical comparisons.
 
 \medskip 

 Our proof proceeds in Section~\ref{s4.1} by rewriting
 the PDE~\eqref{nl-heat}
 as a PDE system~\eqref{pde-system-FK}
 which is solved in Proposition~\ref{mild-system}
 under suitable integrability conditions
 on relevant random functionals, 
 which are derived in Section~\ref{s7}.

 \medskip

 This approach is implemented by studying
 the integrability of the multiplicative weighted progeny
 of binary branching processes in Section~\ref{s7},
 using a dominating binary branching process
 introduced in Section~\ref{s5}. 
 Finally, Section~\ref{s11} deals with the
uniqueness and probabilistic representation
of PDE system solutions under 
 uniform integrability assumptions on random functionals. 

 \medskip

 {A numerical implementation of
 the branching mechanism proposed
 in this paper with application to the
 functional nonlinearity example~\eqref{b2}
 in Section~\ref{s9-2} is available at 
 \url{https://github.com/nprivaul/coding_trees_v2/},
    and includes
      comparisons of the algorithm of this paper
      with that of
      \cite{penent2022fully,nguwipenentprivault}, 
 and with the deep BSDE method, %
    for 
 Allen-Cahn equations %
   and Example~\eqref{nonl0}, 
 in dimensions up to $1000$.
 We observe in particular that the branching algorithms
 remain more stable than the BSDE method
 in dimensions $d=1000$ with $T=0.5$. 

 \subsubsection*{Multi-index notations}
\noindent 
We let
$\N_0 := \{0,1,2,\ldots \}$, 
$\N_1:= \{1,2,3,\ldots \}$,
and
$$
\N_{-1}:= \{-1,0,1,2,3,\ldots \}.
$$ 
 Given $\alpha = (\alpha_1, \alpha_2, \ldots, \alpha_d)
 \in \N_0^d$ 
 a $d$-dimensional multi-index or $d$-tuple
 of non-negative integers, we set 
\begin{equation*}
  |\alpha| : = \alpha_1 + \alpha_2 + \cdots + \alpha_d, 
\end{equation*}
and we let the factorial of $\alpha \in \N_0^d$
be defined as 
\begin{equation*}
  \alpha! = \alpha_1! \alpha_2! \cdots \alpha_d!.
\end{equation*}
 Using the partial order 
\begin{equation*}
  \beta \leq \alpha \quad \Leftrightarrow \quad \beta_i \leq \alpha_i \quad i = 1, \ldots, d, 
\end{equation*}
 on $\N_0^d$,
 the binomial coefficients can be generalized to multi-indices
 $\beta \leq \alpha \in \N_0^d$ as 
\begin{equation*}
  \binom{\alpha}{\beta}=\binom{\alpha_1}{\beta_1}\binom{\alpha_2}{\beta_2} \cdots\binom{\alpha_d}{\beta_d}=\frac{\alpha!}{\beta!(\alpha-\beta)!}. 
\end{equation*}
 The $\alpha$-power of a vector $x\in\R^d$ (or $\mathbb{C}^d$) is
\begin{equation*}
  x^\alpha = x_1^{\alpha_1} x_2^{\alpha_2} \ldots x_d^{\alpha_d}.
\end{equation*}
The partial derivative with order $\alpha$ is defined by
\begin{equation*}
  \pt^\alpha = \pt_1^{\alpha_1} \pt_2^{\alpha_2} \ldots \partial_d^{\alpha_d}, 
\end{equation*}
 where we denote $\pt_i := \pt_{x_i}$,  $i=1,\ldots,d$.
The zero multi-index is $\mathbf{0}  := (0,\ldots, 0)$, while the unit multi-index is $\mathbf{1}  := (1,\ldots, 1)$.
For $i\in\{1, \ldots, d\}$, we denote by $\ind_i$ the multi-index whose $i$-th component is 1 and all others are 0, i.e.,
\begin{equation*}
  \ind_i := (0, \ldots, 0, \underbrace{1}_{i\text{-th}}, 0,\ldots, 0).
\end{equation*}
Clearly, $\pt^{\mathbf{0}} = \id$, $\pt^{\ind_i} = \pt_i$. We will also use the gradient notation $\nabla = (\pt_1, \ldots, \pt_d)^\top$.

\section{Monte Carlo representation} %
\label{section2}
\noindent
 This section presents the probabilist representation
of solutions of the PDE~\eqref{nl-heat}
using a coded branching diffusion process. 
\subsection{Coding mechanism}
\label{sec-construction}
\noindent
In this section we introduce the use of {\em codes},
 as operators which will be used to rewrite the PDE~\eqref{nl-heat} as a PDE system in Lemma~\ref{classical-sol}.
\begin{definition}
  Let $\C_\partial$ and $\C_f$ denote the
  code sets consisting of the following maps:
\begin{equation}
  \label{codes-org}
 \C_\partial :=
  \left\{ \frac{1}{\alpha!} \pt^\alpha  \right\}_{\alpha \in \N_0^d}
  \quad
  \mbox{and}
  \quad 
    \C_f :=
  \left\{ \frac{1}{\alpha!} \pt^\alpha \circ f^{(j)} \right\}_{ ( \alpha , j ) \in \N_0^d \times \N_0 }
, 
\end{equation}
 and let $\C(f) := \C_f \cup \C_\partial $.
\end{definition}
\noindent
 In what follows, codes in $\C (f)$ will also be represented as
 elements of $\N_0^d \times \N_{-1}$, as
\begin{equation}
\label{u-indexed}
  c =
  \begin{cases}
    (\alpha,-1)
    &
    \mbox{ if } \displaystyle c = \frac{1}{\alpha!} \pt^\alpha,
    \ \alpha \in \N_0^d,
        \medskip
    \\
   (\alpha,j)
  & \mbox{ if } \displaystyle c = \frac{1}{\alpha!} \pt^\alpha \circ f^{(j)},
    \ (\alpha , j ) \in \N_0^d \times \N_0, 
  \end{cases}
\end{equation}
 and the action of a given code
 $c\in \C (f)$ on a smooth function $g:\real^d \to \real$
 is given as follows: 
\begin{equation}
\nonumber %
  c (g) (x) = 
  \begin{cases}
    \displaystyle
    \frac{1}{\alpha!} \pt^\alpha g (x) 
    & \mbox{ if } \displaystyle c = \frac{1}{\alpha!} \pt^\alpha,
        \medskip
    \\
    \displaystyle
    \frac{1}{\alpha!} \pt^\alpha [ f^{(j)} (g(x) ) ]
    & \mbox{ if } \displaystyle c = \frac{1}{\alpha!} \pt^\alpha \circ f^{(j)}. 
  \end{cases}
\end{equation} 
 Definition~\ref{djklsd11} introduces a mechanism ${\cal M}$
 acting on codes and only requiring at most
 binary branching, therefore simplifying the construction of
 \cite{penent2022fully},
 \cite{nguwipenentprivault}
 in which the expansion of $\pt^\beta \circ f^{(j+1)}$
 via the Fa\`a di Bruno formula can yield an 
 exponentially explosive number of branches. 
\begin{definition}
  \label{djklsd11}
   Let 
$\M: \C (f) \to ( \R \times \C (f) )
\bigcup
( \R \times \C (f) \times \C (f) )$
 denote the mechanism
defined by
\begin{align}
   \label{mechanism-org}
   &    \M\left( \frac{1}{\alpha!} \pt^\alpha \right)  =\ \left\{ \left( 1,\ \frac{1}{\alpha!} \pt^\alpha \circ f \right) \right\},
   \\
   \nonumber 
   &    \M\left( \frac{1}{\alpha!} \pt^\alpha \circ f^{(j)} \right)  =\ \left\{ \left( 1,\ \frac{1}{(\alpha-\beta)!} \pt^{\alpha-\beta} \circ f,\ \frac{1}{\beta!} \pt^\beta \circ f^{(j+1)} \right)
   \right\}_{\mathbf{0}\leq \beta\leq \alpha}
   \\
   \nonumber 
   & \quad 
   \bigcup
   \left\{
   \left( -\frac{1}{2} ( 1 + \beta_i )( 1 + \alpha_i-\beta_i ),\ \frac{1}{(\alpha-\beta+\ind_i)!} \pt^{\alpha-\beta+\ind_i},\ \frac{1}{(\beta+\ind_i)!} \pt^{\beta+\ind_i} \circ f^{(j+1)} \right) \right\}_{\mathbf{0}\leq \beta\leq \alpha \atop i=1,\ldots,d}, 
\end{align} 
$(\alpha , j)\in\N_0^d \times \N_0$.
\end{definition}
\noindent 
\begin{remark}
 We note that for any $c \in {\cal C}(f)$, the first
 component $z_1$ of $z \in {\cal M}(c)$ is a scalar, while the
 others are codes in the set ${\cal C}(f)$.
 The reason why we keep track of those first components in the
 mechanism ${\cal M}$ is twofold:
\begin{enumerate}[1)]
\item
 the scalar components 
 play the role of coefficients in the formal
 iteration \eqref{pde-system}
 and naturally give rise to a
 Hamilton--Jacobi equation,
 which is the key for the
 analysis of integrability, see Proposition~\ref{gfdkj4}; 
\item
 codes are kept in a same form 
 depending only on the orders of partial derivatives
 of the functions appearing in the remaining components, so that
 the assumptions on code bounds can be formulated using
 the partial derivatives of terminal data, see
 \eqref{bound-code-1}
 and
 \eqref{bound-code-2}.
\end{enumerate}
\end{remark} 
\subsection{Coded branching diffusion}
\label{s-3}
\noindent
 In this subsection, we construct
 an age-dependent coded branching diffusion process 
 using the following ingredients defined on a
 sufficiently large filtered probability space $(\Omega,\F,\P,(\F_s)_{s\geq 0})$ and with the set of labels $\mathbb{K} := \{\varnothing\}\cup \bigcup_{n\geq 1} \N_1^n$. 
\begin{itemize}
\item Diffusion structure: a family $(B^{\mathbf{k}})_{{\mathbf{k}}\in\mathbb{K}}$
  of independent standard $d$-dimensional Brownian motions. 
\item Lifetimes: a family
 $(\tau_{\mathbf{k}})_{{\mathbf{k}}\in\mathbb{K}}$
 of i.i.d. random times %
 with common probability density $\rho:\R^+\to\R$. 
\item Coding mechanism: the pair $({\cal C}(f) ,\M)$ constructed in
 \eqref{codes-org}-\eqref{mechanism-org}.  
\item Offsprings distributions: to every $c\in {\cal C}(f)$,
  we associate a random variable $I(c) = (I_k(c))_{1\leq k \leq |I(c)|}$
  in $\M(c)$ with nowhere zero probability distribution
  $q_c(z) := \P(I(c) = z) > 0$, $z\in \M(c)$,
  defined as follows.  
\begin{enumerate}[i)]
    \item 
 If $c$ is of the form $c= {\alpha!}^{-1} \pt^\alpha$,
 we let $q_c(z) :=1$, $z\in \M (c)$. 
\item
 If $c$ 
  is of the form
  $c = {\alpha!}^{-1} \pt^\alpha \circ f^{(j)}$,
  we let

\begin{subequations}
\begin{empheq}[left={\hskip-1.2cm q_c(z) :=\hskip-.2cm \;\empheqlbrace}]{align}
  \label{offsp-dist}
   &  \displaystyle
      q_\alpha^{(0)} :=
      \displaystyle
      \frac{\prod_{k=1}^d ( 1 + \alpha_k )^{-1}}{d+1}
      ,
      \quad
      \qquad
      \qquad
      \
      z_1 = 1,
    \medskip
   \\
  \label{offsp-dist-2}
   & \displaystyle
      q_\alpha^{(i)}(\beta) := \displaystyle
      \frac{12 |z_1|
        \prod_{k=1}^d ( 1 + \alpha_k )^{-1}
      }{(d+1)
        (2 + \alpha_i)(3 + \alpha_i )
        \displaystyle
      },
      \quad 
\displaystyle
      z_1 = -\frac{1}{2} ( 1 + \beta_i )( 1 + \alpha_i-\beta_i ), 
\end{empheq}
\end{subequations}
\noindent
$\mathbf{0}\leq \beta\leq \alpha$,
  $i=1,\ldots,d$,
  $z\in \M (c)$, 
\end{enumerate}
 based on the first component $z_1$ of $z\in \M (c)$, $c\in \C (f)$. 
\end{itemize}
 We also assume that the families $(B^{\mathbf{k}})_{{\mathbf{k}}\in\mathbb{K}}$,
 $(\tau_{\mathbf{k}})_{{\mathbf{k}}\in\mathbb{K}}$ and the $I(c)$'s are mutually independent.
\begin{remark}
\begin{enumerate}[i)]
\item
 See Lemma~\ref{lA3} for the fact that
 $\{q_c(z)\}_{z\in \M(c)}$ is a probability distribution.
\item
 While the choice of offsprings distributions $\{ q_c\}_{c\in {\cal C}(f)}$
 is not unique, the choice made in \eqref{offsp-dist}-\eqref{offsp-dist-2}
 allows us to formulate suitable bound assumptions on terminal
 data when studying integrability.
\item
 It is also worth noticing that the choice of
 $\{q_c\}_{c\in {\cal C}(f)}$ does not affect the emergence of
 Hamilton--Jacobi equations in Section~\ref{s7}. 
\end{enumerate}
\end{remark}
 Given an initial data $(t,x) \in [0,T]\times\R^d$ and an
 initial code $c\in {\cal C}(f)$, an
 age-dependent branching diffusion with killing
 is constructed as follows:
\begin{itemize}
\item Start from a branch at position $x\in\R^d$ at time $t$,
  labelled $\varnothing$ and coded by $c$. This is the zeroth generation and is the common ancestor of all branches.
  The branch evolves according to $X^\varnothing := x+ B^\varnothing_{\cdot-t}$ with lifetime $\tau_\varnothing$.
  If the end time of life $T_\varnothing := t+\tau_\varnothing> T$, the branch $\varnothing$ is killed and yields no offspring, else the branch splits at its end of life into $|I(c)|-1$ independent offsprings, which belong to the first generation. These offsprings are indexed by label $k$ and coded by $I_{k+1}(c)$, for $k = 1,\ldots, |I(c)|-1$.
\item For generation $n \geq 1$, let the label for a branch be given by ${\mathbf{k}}=(k_1, \ldots, k_{n-1}, k_n) \in \mathbb{N}_+^n$ and the code be given by $c^{{\mathbf{k}}} \in\C (f)$. Denote by ${\mathbf{k}}^{-} := (k_1, \ldots, k_{n-2}, k_{n-1})$ the parent branch of ${\mathbf{k}}$. The branch ${\mathbf{k}}$ starts from $X_{T_{{\mathbf{k}}^-}}^{{\mathbf{k}}^{-}}$ at time $T_{{\mathbf{k}}^{-}}$, according to $X^{\mathbf{k}} := X_{T_{{\mathbf{k}}^-}}^{{\mathbf{k}}^{-}}+ B^{\mathbf{k}}_{\cdot- T_{{\mathbf{k}}^-}}$ with lifetime $\tau_{\mathbf{k}}$. If the end time of life $T_{{\mathbf{k}}} := T_{{\mathbf{k}}^{-}} + \tau_{\mathbf{k}} > T$, the branch ${\mathbf{k}}$ is killed without splitting to any offspring, else the branch splits at its end of life into $|I ( c^{{\mathbf{k}}} )|-1$ independent offsprings, which belong to the $(n+1)$-th generation. These offsprings are indexed by label $({\mathbf{k}},k_{n+1})$ and coded by $I_{k_{n+1}+1}( c^{{\mathbf{k}}})$,
  for $k_{n+1} = 1,\ldots, |I ( c^{{\mathbf{k}}}) |-1$.
\end{itemize} 

\vskip-0.2cm

\noindent
 Figure~\ref{fjkld23-1} and \ref{fjkld23-2} present
 samples of coded and labelled random tree
 started the codes 
 $c=\alpha!^{-1}\partial^\alpha$. 
 and
 $c=\alpha!^{-1} \partial^\alpha \circ f^{(j)}$,
 respectively. 
 
 \vspace{-0.4cm}

\begin{figure}[H]
\centering
\tikzstyle{level 1}=[level distance=4cm, sibling distance=6cm]
\tikzstyle{level 2}=[level distance=4cm, sibling distance=5cm]
\tikzstyle{level 3}=[level distance=7.5cm, sibling distance=5.2cm]
\tikzstyle{level 4}=[level distance=7.5cm, sibling distance=5cm]
\begin{center}
\resizebox{0.88\textwidth}{!}{
\begin{tikzpicture}[scale=1.0,grow=right, sloped]
\node[rectangle,draw,black,text=black,thick, rounded corners=5pt,fill=gray!05]{\large $t$} 
child {
  node[%
     draw,black,text=black,thick, rounded corners=5pt,fill=gray!05,xshift=-0.5cm] {\large $T_\varnothing$} %
            child {
  node[name=data, %
         draw, rounded corners=5pt,fill=gray!05,xshift=0.5cm] (main) 
        {\large $T_{(1)}$} %
                child{
                  node[%
                     draw,black,text=black,thick,xshift=2cm,yshift=-1cm,rounded corners=5pt,fill=gray!05]{\large $T_{(1,2)}$} %
                    child{
    node[name=data, %
         draw, rounded corners=5pt,yshift=0.5cm,xshift=3.5cm,fill=gray!05]  
        {\large $T$} %
                    edge from parent
                    node[above,trapezium,draw,fill=gray!05]{$(1,2,2)$}
                    node[rectangle,draw,below,fill=gray!05]{$\cdots$}%
                    }
                    child{
    node[name=data, %
         draw, rounded corners=5pt,yshift=-1.1cm,xshift=3.5cm,fill=gray!05]  
        {\large $T$ \nodepart{two} $f$}
                    edge from parent
                    node[above,trapezium,draw,fill=gray!05]{$(1,2,1)$}
                    node[rectangle,draw,below,fill=gray!05]{\large $\cdots$} %
                    }
                edge from parent
                node[above,trapezium,draw,fill=gray!05]{$(1,2)$}
                node[rectangle,draw,below,fill=gray!05]{\large $\beta!^{-1} \partial^{\alpha - \beta} \circ f^{(1)}$}%
                }
                child{
                  node[%
                     draw,black,text=black,thick,yshift=0.4cm,rounded corners=5pt,fill=gray!05]{\large $T_{(1,1)}$} %
                    child{
                      node[%
                        draw,black,text=black,thick,yshift=0.4cm,rounded corners=5pt,fill=gray!05]{\large $T_{(1,1,2)}$} %
                    child{
    node[name=data, %
         draw, rounded corners=5pt,yshift=0.6cm,xshift=-2cm,fill=gray!05]  
        {\large $T$} %
                    edge from parent
                    node[above,trapezium,draw,fill=gray!05]{$(1,1,2,2)$}
                    node[rectangle,draw,below,fill=gray!05]{$\cdots$}%
                    }
                    child{
    node[name=data, %
         draw, rounded corners=5pt,yshift=-1cm,xshift=-2cm,fill=gray!05]  
        {\large $T$} %
                    edge from parent
                    node[above,trapezium,draw,fill=gray!05]{$(1,1,2,1)$}
                    node[rectangle,draw,below,fill=gray!05]{\large $\cdots$} %
                    }
                    edge from parent
                    node[above,trapezium,draw,fill=gray!05]{$(1,1,2)$}
                    node[rectangle,draw,below,fill=gray!05]{\large $(\alpha-\beta)!^{-1}\partial^{\alpha-\beta} \circ f^{(2)}$} %
                    }
                    child{
    node[name=data, %
         draw, rounded corners=5pt,fill=gray!05,xshift=5.5cm]  
        {\large $T$ \nodepart{two} $f$}
                    edge from parent
                    node[above,trapezium,draw,fill=gray!05]{$(1,1,1)$}
                    node[rectangle,draw,below,fill=gray!05]{\large $(\alpha-\beta-\gamma)!^{-1}\partial^{\alpha-\beta-\gamma}$} %
                    }
                edge from parent
                node[above,trapezium,draw,fill=gray!05]{$(1,1)$}
                node[rectangle,draw,below,fill=gray!05]{\large $(\alpha - \beta)!^{-1} \partial^{\alpha - \beta} \circ f$} %
                }
                edge from parent
                node[above,trapezium,draw,fill=gray!05] {$(1)$}
                node[rectangle,draw,below,fill=gray!05] {\large $\alpha!^{-1}\partial^\alpha \circ f$} %
            }
                edge from parent
                node[above,trapezium,draw,fill=gray!05] {$\varnothing$}
                node[rectangle,draw,fill=gray!05,below] {\large $\alpha!^{-1}\partial^\alpha $} %
};
\end{tikzpicture}
}
\end{center}
\vskip-0.5cm
\caption{Sample of random tree
 started from $c=\alpha!^{-1}\partial^\alpha$.}
\label{fjkld23-1}
\end{figure}

\vskip-0.1cm

\begin{figure}[H]
\centering
\tikzstyle{level 1}=[level distance=6cm, sibling distance=6cm]
\tikzstyle{level 2}=[level distance=7cm, sibling distance=6.5cm]
\tikzstyle{level 3}=[level distance=8.5cm, sibling distance=5cm]
\tikzstyle{level 4}=[level distance=7.5cm, sibling distance=4cm]
\begin{center}
\resizebox{0.83\textwidth}{!}{
\begin{tikzpicture}[scale=1.0,grow=right, sloped]
        \node[draw,black,text=black,thick, rounded corners=5pt,fill=gray!05] {\large $t$} 
            child {node[name=data, fill=gray!05,%
         draw, rounded corners=5pt] (main) 
        {\large $T_\varnothing$} %
                child{
                  node[%
                    draw,fill=gray!05,text=black,thick,yshift=0.4cm,rounded corners=5pt]{\large $T_{(2)}$ \nodepart{two} $\nabla c$} 
                    child{
    node[name=data, %
         draw, fill=gray!05,rounded corners=5pt,xshift=6cm,yshift=0cm]  
        {\large $T$ \nodepart{two} $\nabla^2 c$}
                    edge from parent
                    node[above,trapezium,draw,fill=gray!05]{$(2,2)$}
                    node[rectangle,draw,fill=gray!05,below]{\large $\gamma!^{-1}\partial^{\gamma+\ind_i}\circ f^{(j+2)}$}%
                    }
                    child{
                      node[%
                         draw,black,text=black,thick,xshift=1cm,yshift=0cm,fill=gray!15,rounded corners=5pt]{\large $T_{(2,2)}$ \nodepart{two} $\nabla f$} 
                    child{
    node[name=data, %
         draw,fill=gray!05, rounded corners=5pt,xshift=-2.5cm,yshift=0.5cm]  
        {\large $T$ \nodepart{two} $\nabla^2 f$}
                    edge from parent
                    node[above,trapezium,draw,fill=gray!05]{$(1,2,2)$}
                    node[rectangle,draw,fill=gray!05,below]{$\cdots $}%
                    }
                    child{
    node[name=data, %
         draw,fill=gray!05, rounded corners=5pt,yshift=-.5cm,xshift=-2.5cm]  
        {\large $T$ \nodepart{two} $faa$}
                    edge from parent
                    node[above,trapezium,draw,fill=gray!05]{$(1,2,1)$}
                    node[rectangle,draw,fill=gray!05,below]{\large $\cdots$} %
                    }
                    edge from parent
                    node[above,trapezium,draw,fill=gray!05]{$(2,1)$}
                    node[rectangle,draw,fill=gray!05,below]{\large $(\alpha-\beta-\gamma)!^{-1}\partial^{\alpha-\beta - \gamma+\ind_i}$} %
                    }
                edge from parent
                node[above,trapezium,draw,fill=gray!05]{$(2)$}
                node[rectangle,draw,fill=gray!05,below]{\large $(\alpha -\beta)!^{-1}\partial^{\alpha -\beta }\circ f^{(j+1)}$}%
                }
                child{
                  node[%
                     draw,black,text=black,thick,yshift=0.4cm,xshift=2cm,fill=gray!15,rounded corners=5pt]{\large $T_{(1)}$ \nodepart{two} $fgg$} %
                    child{
                      node[%
                         draw,black,text=black,thick,xshift=4cm,yshift=1cm,fill=gray!15,rounded corners=5pt]{\large $T$ \nodepart{two} $\nabla f$} 
                    edge from parent
                    node[above,trapezium,draw,fill=gray!05]{$(1,2)$}
                    node[rectangle,draw,fill=gray!05,below]{\large $\gamma!^{-1} \partial^{\gamma + \ind_i}\circ f^{(j+1)}$} %
                    }
                    child{
    node[name=data, %
         draw, fill=gray!05, yshift=0.5cm,xshift=4cm,rounded corners=5pt]  
        {\large $T$ \nodepart{two} $fqqq$}
                    edge from parent
                    node[above,trapezium,draw,fill=gray!05]{$(1,1)$}
                    node[rectangle,draw,fill=gray!05,below]{\large $(\alpha-\beta-\gamma)!^{-1}\partial^{\alpha-\beta-\gamma + \ind_i}$} %
                    }
                edge from parent
                node[above,trapezium,draw,fill=gray!05]{$(1)$}
                node[rectangle,draw,fill=gray!05,below]{\large $(\alpha-\beta)!^{-1}\partial^{\alpha - \beta} \circ f^{(j)}$} %
                }
                edge from parent
                node[above,trapezium,draw,fill=gray!05] {$\varnothing$}
                node[rectangle,draw,fill=gray!05,below]  {\large $\alpha!^{-1}\partial^\alpha \circ f^{(j)}$} %
}; %
\end{tikzpicture}
}
\end{center}
\vskip-0.3cm
\caption{Sample of random tree
  started from $c=\alpha!^{-1} \partial^\alpha \circ f^{(j)}$.}
\label{fjkld23-2}
\end{figure}

\vskip-0.3cm

\noindent  
 The above construction yields a coded branching process, denoted by
 $\mathcal{X}_t^x(c) = \big(\mathcal{X}_{t,s}^x(c)\big)_{s\in[t,T]}$,
 in which every branch has at most two offsprings.
 The label set of the branches which
 die before or at time $T$ is denoted by $\mathcal{K}^\circ_{t,T}(c)$,
 whereas the set of those which die after time $T$
 is denoted by $\mathcal{K}^{\rm b}_{t,T}(c)$.
 For $n\geq 1$, we also denote by $\mathcal{K}^n_{t,T}(c)$ the label set of branch of the $n$-th generation, and we let
 $$
 \mathcal{K}^{\circ,n}_{t,T}(c) := \mathcal{K}^\circ_{t,T}(c) \cap \mathcal{K}^n_{t,T}(c)
 \quad
 \mbox{and}
 \quad
 \mathcal{K}^{{\rm b},n}_{t,T}(c) := \mathcal{K}^{\rm b}_{t,T}(c) \cap \mathcal{K}^n_{t,T}(c),
 $$
 where the superscripts ``$\circ$'' and ``${\rm b}$'' stand
 for ``interior'' and ``boundary'', respectively.
 
\subsection{Probabilistic representation of PDE solutions}
\label{s4}
\noindent
 Consider the random functional 
\begin{equation}
\label{functional}
 \mathcal{H}_{t,T}\big(\mathcal{X}_t^x(c)\big) := \prod_{{\mathbf{k}} \in \mathcal{K}^{\rm b}_{t,T}(c)} \frac{c^{{\mathbf{k}}}(\phi ) \big(X_T^{\mathbf{k}}\big)}{\bar\rho(T-T_{{\mathbf{k}}^{-}})} \prod_{{\mathbf{k}} \in \mathcal{K}^\circ_{t,T}(c)} \frac{I_1( c^{{\mathbf{k}}})}{\rho(\tau_{\mathbf{k}}) q_{c^{{\mathbf{k}}}}(I(c^{{\mathbf{k}}}))} 
\end{equation}
 on the path space, where $\bar\rho$ is the tail distribution function 
$$
\bar\rho(s) := \P( \tau_{\mathbf{k}} >s ) = \int_s^\infty \rho(r) dr,
 \qquad s>0. 
$$

\begin{assumption}
\hypertarget{H}
  The probability density function $\rho$ satisfies
  \begin{enumerate}[i)]
  \item
 \hypertarget{density-waiting-time}{$\rho_*(T) := \min_{s\in[0,T]} \rho(s) > 0$}; and 
 \hfill %
\item 
 there exists $\lambda>0$ such that 
 \hypertarget{asmpt-1}{$\bar\rho(r) \geq e^{-\lambda r}$} for all $r\geq 0$.
\end{enumerate} 
\end{assumption}
\noindent
Assumption~\hyperlink{H}{\bf H}$_\rho$-$i)$ 
guarantees that the denominators of \eqref{functional} are nonzero,
and Assumption~\hyperlink{H}{\bf H}$_\rho$-$ii)$  
 states that the lifetime distribution can be stochastically
 dominated by an exponential distribution.

 \medskip
 
 The existence of $\rho$ satisfying
  Assumption~\hyperlink{H}{\bf H}$_\rho$
  and conditions similar to those in
  Propositions~\ref{integ-final-1}-\ref{integ-final-2}
  has been discussed in
  \cite[Lemma 8.1]{huangprivault2}.
  Examples include exponential distributions,
  as well as combinations of uniform and exponential
  distributions. 
 Propositions~\ref{integ-final-1}-\ref{integ-final-2},
 which are proved in Section~\ref{s7},
 ensure the integrability of $\mathcal{H}_{t,T}\big(\mathcal{X}_t^x(c)\big)$ 
 under growth conditions on the
 derivatives of the terminal condition $\phi$ and nonlinearity
 $f$. 
\begin{proposition}[Factorial growth] 
\label{integ-final-1}
 Under Assumption~\hyperlink{H}{\bf H$_\rho$},
 let $\theta, r>0$
 be such that
\begin{subequations}
\begin{empheq}[left=\empheqlbrace]{align}
\label{*}
  &    \displaystyle
 \theta r \geq \sqrt{\frac{2}{d}},
     \medskip
      \\
\label{*-2}  %
   &   \displaystyle
 \min ( \theta^2 r^2,1)
 \geq \frac{\rho_*(T) \bar\rho(T)}{d+1}, 
\end{empheq}
\end{subequations}
 let 
 \begin{equation}
\label{gthetar}
  g_{\theta , r} (\alpha) :=
  \frac{\theta^{|\alpha|}}{\alpha!}
  \prod_{l=0}^{|\alpha|-1} ( r + l), \quad \alpha\in \N_0^d,
\end{equation}
 and suppose that 
\begin{subequations}
\begin{empheq}[left=\empheqlbrace]{align}
 \label{bound-code-1}
  &
  \displaystyle
  \
  \max \Big( \| \pt^\alpha \phi  \|_\infty
  ,
  \frac{1}{\min ( \rho_*(T) , 1)}
  \sup_{k\geq 0} \big\| \pt^\alpha [f^{(k)} (\phi )]\big\|_\infty
  \Big) \leq
  \alpha! g_{\theta , r} (\alpha), 
  \ \ \alpha\in \N_0^d, 
  \smallskip
  \\
    \label{bound-radius}
    & \displaystyle
    \ 
    2^{r+2}
    \frac{1- e^{-\lambda T}}{\rho_*(T)\bar\rho(T) } <
    R_{\theta,r} := \frac{(r+1)^{r+1}}{2\theta^2 r (r+2)^{r+2}d}. %
\end{empheq}
\end{subequations} 
  Then, 
 $\mathcal{H}_{t,T}\big(\mathcal{X}_t^x(c)\big)$
 is integrable uniformly in $(t,x)\in[0,T]\times\R^d$,
 with 
\begin{equation*}
    \E \left[ \left| \mathcal{H}_{t,T}\big(\mathcal{X}_t^x(c)\big) \right| \right] \leq C_{\theta,r,d, \lambda,T} (2\theta d)^{|\alpha|}, \quad (t,x) \in [0,T]\times \R^d, 
\end{equation*}
 for any code $c\in \C (f)$ written as 
 $c = (\alpha , j)\in\N_0^d \times \N_{-1}$, 
 where $C_{\theta,r,d, \lambda,T}>0$ is a constant. 
\end{proposition}
\begin{proposition}[Exponential growth] 
\label{integ-final-2}
 Under Assumption~\hyperlink{H}{\bf H}$_\rho$,
 let $\theta, r>0$ be such that 
\begin{subequations}
\begin{empheq}[left=\empheqlbrace]{align}
\label{cond-theta-2-0}
  &    \displaystyle
     \medskip
     \theta \geq \sqrt{\frac{2}{d}},
     \\
\label{cond-theta-2-0-2}  
   &   \displaystyle
\min ( \theta^2 ,1)
 \geq \frac{\rho_*(T) \bar\rho(T)}{d+1}, 
\end{empheq}
\end{subequations}
 let 
\begin{equation}
\label{gtheta}
 g_\theta (\alpha) :=
 \frac{\theta^{|\alpha|}}{\alpha!}, \quad \alpha\in \N_0^d,
\end{equation} 
 and suppose that 
\begin{subequations}
\begin{empheq}[left=\empheqlbrace]{align}
     \label{bound-code-2}
  &    \displaystyle
     \max
     \Big( \| \pt^\alpha \phi  \|_\infty ,
     \frac{1}{\min ( \rho_*(T) , 1)} \sup_{k\geq 0} \big\| \pt^\alpha [f^{(k)} (\phi )]\big\|_\infty \Big) \leq
     \alpha! g_\theta (\alpha),
     \quad \alpha\in \N_0^d, 
      \medskip
      \\
\label{bound-radius-2}
   &   \displaystyle
 \frac{1- e^{-\lambda T}}{\rho_*(T)\bar\rho(T) }
 <
 R_\theta : = \frac{1}{2e\theta^2 d}. 
\end{empheq}
\end{subequations}
 Then, 
 $\mathcal{H}_{t,T}\big(\mathcal{X}_t^x(c)\big)$ is integrable uniformly in $(t,x)\in[0,T]\times\R^d$, with 
 \begin{equation*}
    \E \left[ \left| \mathcal{H}_{t,T}\big(\mathcal{X}_t^x(c)\big) \right| \right] \leq C_{\theta,d,\lambda,T} \left( \frac{d}{\log
       \big( R_\theta \rho_*(T)\bar\rho(T) /
      (1- e^{-\lambda T} ) \big)} \right)^{|\alpha|-1}, \quad (t,x) \in [0,T]\times \R^d,
\end{equation*}
 for any code $c\in \C (f)$ written
 as $c = (\alpha , j )\in\N_0^d \times \N_{-1}$,
 where $C_{\theta,d,\lambda,T}>0$ is a constant.   
\end{proposition}
\begin{remark}
  \begin{enumerate}[i)]
\item
  On the one hand, the expressions in \eqref{bound-radius} and
  \eqref{bound-radius-2} tell that when either $\theta$ or
  $r$ gets larger, $R_{\theta, r}$ and $R_\theta$ get smaller.
  On the other hand, when $r$ gets large,
  $2^{-(r+2)}R_{\theta, r}$ is less than
  $R_\theta$, while the bound of \eqref{bound-radius}
  is bigger than that of \eqref{bound-radius-2}.
  In summary, when the bounds on initial data and nonlinearity get looser, the condition on the existence time gets more strict, and vice versa.
  \item
 Using the probability density function
 $\rho(s) = \lambda e^{-\lambda s}$ which fulfills
      Assumption~\hyperlink{H}{\bf H}$_\rho$-\hyperlink{asmpt-1}{\rm ii)
      for $\lambda>0$}, %
 Conditions~%
\eqref{bound-radius} and \eqref{bound-radius-2} 
on the time interval $[0,T]$
yield the more explicit estimates
\begin{equation*}
T<\begin{cases}
    \displaystyle
   \frac{1}{\lambda} \log \left(
  \frac{1}{2}
  +
  \sqrt{\frac{1}{4} + \lambda 2^{-(r+2)} R_{\theta,r} } \right)
  < 2^{-(r+2)} R_{\theta,r}, 
 \medskip
    \\
    \displaystyle
    \frac{1}{\lambda} \log \left( \frac{1}{2}
    + \sqrt{\frac{1}{4} + \lambda R_\theta } \right) < R_\theta. %
\end{cases}
\end{equation*}
\item
 In Proposition~\ref{integ-final-2},
 $\E \left[ \left| \mathcal{H}_{t,T}\big(\mathcal{X}_t^x(c)\big) \right| \right]$
 decays exponentially
 with respect to the absolute
 order $|\alpha |$ of the derivative in the code $c \in \C (f)$ %
 under the condition 
\begin{equation*}
 \frac{1- e^{-\lambda T}}{\rho_*(T)\bar\rho(T) }< e^{-d} R_\theta. 
\end{equation*}
\item
\noindent 
 In \eqref{bound-code-1} and \eqref{bound-code-2},
 only the values of $f^{(k)}$ within the interval
 $\big[\inf_{x\in\R^d} \phi (x), \sup_{x\in\R^d} \phi (x)\big]$
  have an impact on the bounds of $\pt^\alpha [f^{(k)} (\phi )]$,
 hence the nonlinearity $f$
 can be modified arbitrarily outside of that interval.
  \end{enumerate}
\end{remark}
\noindent
Theorem~\ref{thm-1} provides the probabilistic
representation of solutions of
 the PDE~\eqref{nl-heat} as a consequence of
 Propositions~\ref{integ-final-1}-\ref{integ-final-2},
 Proposition~\ref{mild-system}, and Proposition~\ref{uniqueness-system}. 
\begin{theorem}
\label{thm-1}
 Suppose that
  \begin{enumerate}[i)]
  \item
    Assumption~\hyperlink{H}{\bf H}$_\rho$ 
 is satisfied,
\item
  the PDE~\eqref{nl-heat} admits a classical solution
  $u\in {\cal C}^{1,\infty}([0,T]\times\R^d)$, and
\item the function $c(u)(t)$ satisfies 
  \begin{equation}
    \label{***}
  \sup_{(\alpha , j ) \in
    \N_0^d \times \N_{-1}
    } C^{-|\alpha|} \| c(u)(t) \|_\infty < \infty
\end{equation}
 for all $t\in[0,T]$ and $c\in\C (f)$, where either:   
\begin{enumerate}[a)]
      \item
 $C := 2\theta d$ 
 for some $\theta > 0$ 
 such that \eqref{*}-\eqref{*-2} and 
 \eqref{bound-code-1}-\eqref{bound-radius}
 are satisfied for some $r>0$, 
 or 
\item
  $C := \max \left( 1 ,
 d / \log
       \big(\rho_*(T)\bar\rho(T) /
       (2 (1- e^{-\lambda T}) e \theta^2 d) \big)
 \right)$
  for some $\theta >0$ such that
  \eqref{cond-theta-2-0}-\eqref{cond-theta-2-0-2} and \eqref{bound-code-2}-\eqref{bound-radius-2}
  are satisfied. 
\end{enumerate}
\end{enumerate}
  Then, the solution $u$ of the PDE~\eqref{nl-heat} can be
  uniquely represented as
 the expectation 
 \begin{equation}
   \label{fkldsaa1} 
    u(t,x) = \E \big[ \mathcal{H}_{t,T}\big(\mathcal{X}_t^x ( \id ) \big) \big],
    \quad (t,x)\in [0,T]\times \R^d.
  \end{equation}
\end{theorem}
\begin{proof}
  Under the respective conditions $a)$ and $b)$,
  Propositions~\ref{integ-final-1} and \ref{integ-final-2}
  ensure the integrability of $\mathcal{H}_{t,T}\big(\mathcal{X}_t^x(c)\big)$ uniformly in $(t,x)\in[0,T]\times\R^d$ for each $c\in\C (f)$,
  and $c(\phi )$ is bounded on
  $\real^d$ for every $c\in\C (f)$ by \eqref{***}. 
  Thus, by Proposition~\ref{mild-system}-$i)$
  the family $\{u_c(t,x)\}_{c\in\C (f)}
  := \big\{ 
  \E \big[
    \mathcal{H}_{t,T}\big(\mathcal{X}_t^x(c)\big) ]
  \big\}_{c\in\C (f)}$
  is a mild solution of the PDE system
      \begin{equation}
\label{pde-system}
\left( \pt_t + \frac{1}{2} \Delta \right)
u_c + \sum_{z \in \mathcal{M}(c)} z_1 \prod_{i=2}^{|z|} u_{z_i} =0, \quad u_c(T) = c(\phi ).
\end{equation}
  Moreover, in both cases $a)$ and $b)$, 
  by Proposition~\ref{uniqueness-system}
  the family $\{u_c\}_{c\in\C (f)}$ is the unique mild solution
  of the PDE system~\eqref{pde-system}
  such that 
$$ 
  \sup_{(\alpha , j ) \in
    \N_0^d \times \N_{-1}
    } C^{-|\alpha|} \| c(u)(t) \|_\infty < \infty. 
  $$
  Hence, by Proposition~\ref{mild-system}-$ii)$
  we obtain that
  $$
  c(u) (t,x)= u_c(t,x)
  =
  \E \big[ \mathcal{H}_{t,T}\big(\mathcal{X}_t^x ( c ) \big) \big],
  \quad (t,x)\in [0,T]\times \R^d, 
  $$
  for all $c\in\C (f)$.
  The particular case $c = \id$ yields \eqref{fkldsaa1}. 
\end{proof}
\noindent 
Next, we address the existence of mild and viscosity solutions for the 
PDE~\eqref{nl-heat}. 
 Let 
 $( S(t))_{t\geq 0} = (e^{t \Delta / 2})_{t\geq 0}$ denote the heat semigroup
 generated by $\Delta / 2$, defined as $S(0)={\rm Id}$ and 
\begin{equation*}
  S(t) v(x) := \int_{\R^d} v(y) p_0(t,x-y) dy, \quad
  (t,x)\in (0,\infty )\times\R^d, 
\end{equation*}
where
$$
p_0(t,x) := \frac{1}{\sqrt{2\pi t}} e^{- {|x|^2} / (2t ) }, \quad
(t,x)\in (0,\infty )\times\R^d, 
$$
 is the standard heat kernel.
\begin{definition}
 Let $u\in {\cal C}([0,T]\times\R^d)$ be such that $u(T) = \phi $.
\begin{enumerate}[i)]
\item
 The function $u$ is called a mild solution of 
 the PDE~\eqref{nl-heat} if it satisfies 
$$ 
  u = S(T-t) \phi  + \int_t^T S(s-t) f(u)(s) ds, \quad
   t\in [0,T].
$$ 
\item
 The function $u$ is called a viscosity solution of 
 the PDE~\eqref{nl-heat} if the following two statements are satisfied: 
\begin{itemize}
  \item 
 (viscosity subsolution) for each $v\in {\cal C}^\infty((0,T)\times\R^d)$, if $u-v$ has a local maximum at a point $(t_0,x_0) \in (0,T)\times\R^d$ such that $u=v$ at $(t_0,x_0)$, then
\begin{equation*}
  \frac{\partial v}{\partial t} (t_0, x_0)+
  \frac{1}{2} \Delta
  v(t_0, x_0) + f(v(t_0, x_0)) \geq 0;
\end{equation*}
  \item 
 (viscosity supersolution) for each $v\in {\cal C}^\infty((0,T)\times\R^d)$, if $u-v$ has a local minimum at a point $(t_0,x_0) \in (0,T)\times\R^d$ such that $u=v$ at $(t_0,x_0)$, then
\begin{equation*}
  \frac{\partial v}{\partial t} (t_0, x_0)+
  \frac{1}{2} \Delta
  v(t_0, x_0) + f(v(t_0, x_0)) \leq 0.
\end{equation*}
\end{itemize} 
  \end{enumerate}
\end{definition}
\noindent
The existence of viscosity
  solutions for the PDE~\eqref{nl-heat}
 in Theorem~\ref{main-visc}
 is obtained 
  by applying the argument of e.g. \cite[Proposition~3.4]{labordere}
  or \cite[Theorem~3.1]{claisse} to
  the me\-cha\-nism~${\cal M}$. 
  In Proposition~\ref{unf-intg-2} we will provide sufficient
  integrability conditions
 ensuring that the consistency condition 
 $u_f = f(u_{\id})$ in Theorem~\ref{main-visc}
 is satisfied. 
\begin{theorem}
\label{main-visc}
 Assume that 
\begin{itemize}
\item $\mathcal{H}_{t,T}\big(\mathcal{X}_t^x(c)\big)$ is $L^1$-integrable
  for all $(t,x)\in[0,T]\times\R^d$
  and $c\in \C (f)$, 
  and
\item
  the functions
  $$
  u_\id (t,x)  := \E [ \mathcal{H}_{t,T}\big(\mathcal{X}_t^x(\id )\big) ]
  \quad
  \mbox{and}
  \quad 
  u_f(t,x) := 
  \E \big[
    \mathcal{H}_{t,T}\big(\mathcal{X}_t^x(f)\big) ],
  \quad (t,x)\in[0,T]\times\R^d,
  $$ 
  satisfy $u_f = f(u_{\id})$. 
\end{itemize}
   The following assertions hold true. 
\begin{enumerate}[i)]
\item
 If $c(\phi )$ is bounded on $\real^d$
  for all $c\in \C (f)$, 
 then 
  $u_{\id}$ is a mild solution of
            the PDE~\eqref{nl-heat}.
\item 
  If
 $u_\id$ 
  is continuous in
  $(t,x)\in[0,T]\times\R^d$
  and 
  $\mathcal{H}_{t,T}\big(\mathcal{X}_t^x(c)\big)$ is $L^1$-integrable uniformly in $(t,x)\in[0,T]\times\R^d$
  for all $c\in \C (f)$, 
 then $u_\id$ is a viscosity solution of
  the PDE~\eqref{nl-heat}.
\end{enumerate}
\end{theorem}
\begin{proof}
\noindent
$i)$ Since $u_f = f(u_{\id})$, it follows from
Proposition~\ref{mild-system}-$i)$ that
\begin{align*}
  u_\id(t) & = S(T-t) \phi  + \int_t^T S(s-t) u_f(s) ds
  \\
  & = S(T-t) \phi  + \int_t^T S(s-t) f(u_\id)(s) ds, \quad
   t\in [0,T].
\end{align*}
Thus, $u_{\id}$ is a mild solution of the PDE~\eqref{nl-heat}.

\smallskip

\noindent
 $ii)$
 Let $\{u_c\}_{c\in\C (f)}$ denote
 the family of functions defined as 
   \begin{equation}
\nonumber 
    u_c(t,x) := \E \big[ \mathcal{H}_{t,T}\big(\mathcal{X}_t^x(c)\big) \big], \quad (t,x)\in[0,T]\times\R^d, \ c\in \C (f). 
 \end{equation}
  For any $\delta>0$, it follows from Lemma~\ref{lemma-uc}  
  and the Markov property that
\begin{equation*}
  \begin{split}
    u_c(t,x) &= \E \left[ u_c \big(t+\delta, X_{t+\delta}^\varnothing\big) + \int_t^{t+\delta} \sum_{z \in \mathcal{M}(c)} z_1 \prod_{i=2}^{|z|} u_{z_i}(s, X^\varnothing_s) ds \right].
  \end{split}
\end{equation*}
Let $v\in {\cal C}^\infty((0,T)\times\R^d)$ such that $u-v$ has a local minimum $0$ at $(t,x) \in (0,T)\times\R^d$.
 By It\^o's formula we have 
\begin{equation*}
  \begin{split}
    \E \left[ v_c \big(t+\delta, X_{t+\delta}^\varnothing\big) \right] &= v_c(t,x) + \E\left[ \int_t^{t+\delta} \left(
      \frac{\partial v_c}{\partial s}(s,X^\varnothing_s) 
      + \frac{1}{2} \Delta
      v_c(s,X^\varnothing_s)
      \right)
      ds \right], 
  \end{split}
\end{equation*}
 hence 
\begin{equation*}
  \begin{split}
    \E\left[ \int_t^{t+\delta} \left(
      \frac{\partial v_c}{\partial s} (s,X^\varnothing_s)
      + \frac{1}{2} \Delta v_c(s,X^\varnothing_s)
      + \sum_{z \in \mathcal{M}(c)} z_1 \prod_{i=2}^{|z|} u_{z_i}
      (s,X^\varnothing_s)
      \right)
      ds \right] \geq 0.
  \end{split}
\end{equation*}
It then follows from the mean value and
dominated convergence theorems that
\begin{equation*}
  \frac{\partial v_c}{\partial t} (t,x)
  + \frac{1}{2} \Delta v_c (t,x)
  + \sum_{z \in \mathcal{M}(c)} z_1 \prod_{i=2}^{|z|} u_{z_i} (t,x) \geq 0.
\end{equation*}
Set now $c=\id$. Since $\M(\id) = \{(1, f) \}$ and $u_f (t,x) = f(u(t,x)) = f(v(t,x))$, it follows that
\begin{equation*}
 \frac{\partial }{\pt t} v(t,x) + \frac{1}{2} \Delta v (t,x)+ f ( v (t,x) ) \geq 0,
\end{equation*}
 which shows that $u$ is a viscosity subsolution. The assertion that $u$ is a viscosity supersolution follows similarly.
\end{proof}
\section{Examples}
\label{s9}
\subsection{Generic examples}
\label{jklfda1} 
\noindent 
We consider the upper bounds on codes in \eqref{bound-code-1}
and \eqref{bound-code-2}, written as 
\begin{equation}\label{upper-bound-unf}
  \max \Big(
  \| \pt^\alpha \phi  \|_\infty
  ,
  \sup_{k\geq 0} \big\| \pt^\alpha [f^{(k)} (\phi )]\big\|_\infty
  \Big)
  \leq \Theta(|\alpha|), \quad \alpha\in \N_0^d,
\end{equation}
for some sequence $(\Theta(m))_{m\geq 0}$.
 We note that \eqref{upper-bound-unf} 
 simplifies in the following two trivial situations. 
 \begin{enumerate}[1)] 
 \item The nonlinearity $f$ is either a constant or 
   the identity function. %
   In this case, \eqref{upper-bound-unf} reduces to
  \begin{equation*}
    \| \pt^\alpha \phi  \|_\infty \leq \Theta(|\alpha|), \quad
     \alpha\in \N_0^d.
  \end{equation*}
\item The terminal data $\phi $ is constant. 
  In this case, \eqref{upper-bound-unf} is satisfied
  for any smooth nonlinearity~$f$.   
\end{enumerate} 
 In Lemma~\ref{jkld133}, we
 separate the assumptions on the terminal data $\phi $
 and the nonlinearity $f$ in \eqref{upper-bound-unf}.
 \begin{lemma}
   \label{jkld133}
     Assume that $f^{(j)}(x)$ is uniformly bounded in $x\in \real^d$ and
 $j \geq 0$, and that    
\begin{equation*}
  \| \pt^\alpha \phi  \|_\infty \leq K(|\alpha|), \quad
   \alpha\in \N_0^d, %
\end{equation*}
 where $(K(m))_{m\geq 0}$ is a positive sequence.
 Then, condition~\eqref{upper-bound-unf} is satisfied provided that 
 \begin{enumerate}[i)]
 \item
   $
K(m) \leq \Theta(m)$, $m \geq 1$,
and
\item 
  $\displaystyle 
     \sup_{k\geq 0} \big\| f^{(k)}(\phi ) \big\|_\infty
     B_m(K(1), \ldots, K(m))
     \leq  \Theta(m)$,
     $m \geq 1$,
\end{enumerate} 
  where $B_m (x_1,\ldots , x_m )$
 denotes the complete exponential Bell polynomial
 of order $m\geq 1$.
\end{lemma} 
\begin{Proof}
  By the multivariate Fa\`a di Bruno formula 
\begin{equation*}
  \frac{\pt^n}{\pt x_{k_1} \cdots \pt x_{k_n}} [f^{(l)} (\phi )] = \sum_{\pi \in \Pi [ n ]} f^{(l+\#\pi)}(\phi ) \prod_{A \in \pi} \frac{\pt^{\#A} \phi }{\prod_{j\in A} \pt x_{k_j}},
\end{equation*}
where $\Pi [ n ]$ is the set of all partitions of
$\{1, \ldots, n\}$ and $\#$ denotes set cardinality,
  we have 
\begin{equation*}
  \begin{split}
    \big\| \pt^\alpha [f^{(l)} (\phi )] \big\|_\infty &\leq \sum_{\pi \in \Pi [ |\alpha| ] } \| f^{(l+\#\pi)}(\phi ) \|_\infty \prod_{A \in \pi} K(\#A) \\
    &\leq \sup_{k\geq 0} \big\| f^{(k)}(\phi ) \big\|_\infty \sum_{\pi \in \Pi[ |\alpha| ] } \prod_{A \in \pi} K(\#A) \\
    &\leq \sup_{k\geq 0} \big\| f^{(k)}(\phi ) \big\|_\infty B_{|\alpha|}(K(1), \ldots, K(|\alpha|)).
  \end{split}
\end{equation*}
\end{Proof}
\noindent
In the following cases, assuming
$\sup_{k\geq 0} \big\| f^{(k)}(\phi ) \big\|_\infty<\infty$
we have several possible choices of $K(m)$ 
when $\Theta (m)$ has the factorial form used in \eqref{bound-code-1}:
\begin{enumerate}[1)] 
\item $K(m) = \theta^m$ for some $\theta >0$ 
  and $\Theta(m) \sim \theta^m m!$.
 In this case, we have 
  $$
  B_m( \theta, \theta^2 , \ldots, \theta^m ) = \sum_{k=1}^m \theta^k S(m,k) \leq \theta^m B_m(1, \ldots, 1) \leq \Theta(m), %
  $$
  where $S(m,k)$'s are Stirling numbers of the second kind.   
  This includes the case where
  the derivatives of $\phi $ are uniformly bounded
  when $K (m) \equiv 1$ is constant.   
\item
  $K(m) = \theta^m (m-1)!$
  and $\Theta(m) \sim \theta^m m!$
  for some $\theta>0$.
 In this case, we have 
  $$
  B_m(\theta 0!, \ldots, \theta^m (m-1)!) = \theta^m B_m(0!, \ldots, (m-1)!) = \theta^m m!,
  $$
  same choice for $\Theta$ as in $(1)$ above. 
\item $K(m) = \theta^m m!$ for some $\theta>0$ 
    and $\Theta(m) \sim (2\theta)^m m!$
  for some $\theta>0$.
 In this case, we have 
  $$
    B_m(\theta 1!, \ldots, \theta^m m!)  = \theta^m B_m(1!, \ldots, m!)
     = \theta^m \sum_{k=1}^m \frac{(m-1)!}{(k-1)!} \binom{m}{k}
     < (2\theta)^m (m-1)!. 
$$ 
\end{enumerate} 
\subsection{Functional nonlinearity example} 
\noindent
 In what follows, we denote 
 $$
 \langle x \rangle := x_1+ \cdots +x_d,
 \quad
 x\in \real^d.$$
 We consider the semilinear PDE %
\begin{numcases}{} 
  \nonumber
  \displaystyle
    \frac{\partial u}{\partial t} (t,x) + \frac{1}{2} \Delta u (t,x) + 4 e^{-u (t,x) }-10 e^{-u(t,x) / 2}+e^{u (t,x) / 2}-e^{u(t,x) } + 6 =0,
    \medskip
    \\
    \label{b2}
    \displaystyle
   u(T,x) %
   = 2 \log \left( \frac{2+e^{\langle x \rangle / \sqrt{d}}}{1+e^{\langle x \rangle / \sqrt{d}}} \right),
\end{numcases}
 $(t,x) \in [0,T] \times \real^d$,
 which admits the ${\cal C}^{\infty}$ solution
$$
 u(t, x)=2 \log \left( \frac{2+e^{\langle x \rangle / \sqrt{d}-(T-t)}}{1+e^{\langle x \rangle / \sqrt{d}-(T-t)}} \right) 
 \in (0, 2\log 2), \quad(t, x) \in[0, T] \times \mathbb{R}^d. 
$$
\begin{proposition}
\label{fjld1a}
  The nonlinearity $f(u)=4 e^{-u}-10 e^{-u / 2}+e^{u / 2}-e^u + 6$
  and terminal data $\phi (x) = 2 \log ( 1 + 1/(1+e^{\langle x \rangle / \sqrt{d}}) )$
  satisfy condition~\eqref{bound-code-1} for $r=2$, i.e. we have      
  $$ 
  \big\| \partial^\alpha \phi \big\|_\infty < 2 \left(
  \frac{(1+\theta)\theta}{\sqrt{d}}\right)^{|\alpha |} (|\alpha |-1)!
  \ \mbox{ and } \ 
   \sup_{k\geq 0}
   \big\| \partial_\alpha [f^{(k)}(\phi )] \big\| < 10
   \left( \frac{(1+\theta)\theta}{\sqrt{d}} \right)^{|\alpha |} (|\alpha |+1)!, 
$$
 $\alpha\in \N_0^d$. 
\end{proposition} 
\begin{Proof}
  For simplicity, the proof is first stated with $d=1$,
  and then extended to $d\geq 1$ using the relation
  $\partial_\alpha g ( \langle x \rangle /\sqrt{d} )
  = d^{-|\alpha |/2} g^{(|\alpha |)}( \langle x \rangle /\sqrt{d} )$,
  $x\in \real^d$, for any $g\in {\cal C}^\infty (\real^d)$.
  Letting $\zeta (x) := {1} / ( 1+e^x) \in (0,1)$, 
 by the Fa\`a di Bruno formula we have 
\begin{equation*}
  \zeta^{(m)}(x)  =\sum_{k=1}^m \frac{(-1)^k k!}{(1+e^x)^{k+1}} B_{m, k}(e^x, \ldots, e^x) = \sum_{k=1}^m \frac{(-e^x)^k k!}{(1+e^x)^{k+1}} S(m, k),
  \quad m\geq 1, 
\end{equation*}
 by Lemma~\ref{polylog-stirling} we have 
\begin{equation}\label{phi-bound}
  \big\|\zeta^{(m)}\big\|_\infty \leq \sum_{k=1}^m k!S(m, k) = \frac{1}{2} \mathrm{Li}_{-m}(1 / 2) \sim \frac{m!}{2(\log 2)^{m+1}} \leq \theta^{m+1} m!
\end{equation}
for some $\theta>1$.
As $\phi (x) = 2\log (1+\zeta (x))$, we use Fa\`a di Bruno's formula again to derive
\begin{equation*}
  \begin{split}
    \phi^{(m)}(x) &= 2 \sum_{k=1}^m
    (-1)^{k-1}
    \frac{(k-1)!}{(1+\zeta (x))^k} B_{m, k}(\zeta'(x), \ldots, \zeta^{(m-k+1)}(x)),
    \quad m \geq 1,
  \end{split}
\end{equation*}
which, together with \eqref{phi-bound}, implies
\begin{align*}
  \big\|\phi^{(m)}\big\|_\infty &\leq 2 \sum_{k=1}^m (k-1)! B_{m, k}
  ( \theta^2 1!, \ldots, \theta^{m-k+2} (m-k+1)! )
  \\
  &= 2 \sum_{k=1}^m (k-1)! \theta^{m+k} B_{m, k}
  ( 1!, \ldots, (m-k+1)! )
  \\
    &= 2 \sum_{k=1}^m (k-1)! \theta^{m+k} \frac{m!}{k!} \binom{m-1}{k-1}
    \\
     & = 2 \theta^m (m-1)! \sum_{k=1}^m \theta^k \binom{m}{k} \\
    &< 2 ( \theta(1+\theta) )^m (m-1)!,
       \quad m \geq 1.
\end{align*}
 On the other hand, the nonlinearity $f(u)=4 e^{-u}-10 e^{-u / 2}+e^{u / 2}-e^u + 6$ satisfies 
\begin{align*}
  f^{(k)}(\phi (x)) & = \frac{4 (-1)^k}{(1+\zeta (x))^2}
  - \frac{10 (-1)^k}{2^k (1+\zeta (x))}
  +\frac{1+\zeta (x)}{2^k} -(1+\zeta (x))^2 +6 \ind_{\{k=0\}}, 
  \quad k \geq 0, 
\end{align*}
 i.e. $f^{(k)}(\phi (x)) = \psi_k(\zeta (x))$ with 
\begin{align*}
  \psi_k(z) := \frac{4 (-1)^k}{(1+z)^2}
  - \frac{10 (-1)^k}{2^k (1+z)}
  +\frac{1+z}{2^k} -(1+z)^2 +6 \ind_{\{k=0\}}, 
  \quad k \geq 0.
\end{align*}
Since $\zeta $ takes values in the interval $(0,1)$, it can be seen that
$\sup_{k\geq 0} \big\| f^{(k)}(\phi ) \big\|_\infty < \infty$.
One may also estimate the derivatives $f^{(l)}(\phi )$ in the same way as those of $\phi $, as 
\begin{equation*}
  \big\| \psi_l^{(k)} \big\|_\infty \leq 4(k+1)! + 10 k! + 1 +2 < 10(k+1)!,
  \quad
 k,l\geq 0.
\end{equation*}
Again, this together with \eqref{phi-bound} and Fa\`a di Bruno's formula yields 
\begin{align*}
  \big\| [f^{(k)}(\phi )]^{(m)} \big\| & \leq 10 \sum_{l=1}^m (l+1)! B_{m, l}
  ( \theta^2 1!, \ldots, \theta^{m-l+2} (m-l+1)! )
  \\
   & < 10 ( \theta(1+\theta) )^m (m+1)!, \quad k,m\geq 0.
\end{align*}
\end{Proof}
\section{Numerical examples}
\label{s9-2}
\noindent 
 In this section, we compare our binary branching to the
 Fa\`a di Bruno branching of \cite{penent2022fully}
 by adapting the PyTorch implementation of \cite{nguwipenentprivault}, 
 and to the BSDE method of 
 \cite{han2018solvingarxiv,han2018solving},
 for which we use the TensorFlow implementation 
 available at \url{https://github.com/frankhan91/DeepBSDE},
 run with $4000$ iterations.
 The three algorithms are run on separate
 NVIDIA L40 GPUs with 46GB of memory. %

 \medskip

 In the following examples, the
 terminal condition $\phi (x)$ is such that
 $\Vert \phi^{(n)}\Vert_\infty$ has at most factorial growth
 in $n\geq 1$, hence they belong to the setting of either
 Lemma~\ref{jkld133} or Proposition~\ref{fjld1a}.
 Regarding numerical performance, we note the following. 
\begin{itemize}
\item
 The algorithm of 
 \cite{penent2022fully,nguwipenentprivault}
 achieves the best overall numerical performance.
\item Our binary branching algorithm is generally less
  stable numerically, probably due to the need to
  the computation of iterated gradients
  $\pt^\alpha [f^{(k)} (\phi )]$
  instead of applying explicit 
  Fa\`a di Bruno expansions as in
  \cite{penent2022fully,nguwipenentprivault}.
\item The branching algorithms can be 
  a hundred times faster than the BSDE method for smaller time horizons,
  while retaining comparable numerical performance.
\item The BSDE method may blow up and produce NaN values in dimension
  $d=1000$. 
  \end{itemize} }    
\begin{enumerate}[1)]  
\item %
  Let $f(u) = u - u^3$, and
  consider the Allen-Cahn (or Ginzburg-Landau) equation
\begin{numcases}{} 
\label{gl0}
  \displaystyle
    \frac{\partial u}{\partial t} u(t,x) + \Delta_x u(t,x) + u(t,x) - u^3(t,x) = 0,
    \medskip
    \\
    \nonumber
    \displaystyle
   u(T,x) 
   = \frac{1}{2 + 2 ( x_1^2+\cdots + x_d^2) / 5},
  \end{numcases}
 $(t,x) \in [0,T]\times \real^d$, 
as in \S~4.2 of \cite{han2018solvingarxiv}
or \cite{han2018solving}, which does not admit a closed form
solution. 
 
\begin{figure}[H]
\centering
\begin{subfigure}{.49\textwidth}
\includegraphics[width=\textwidth]{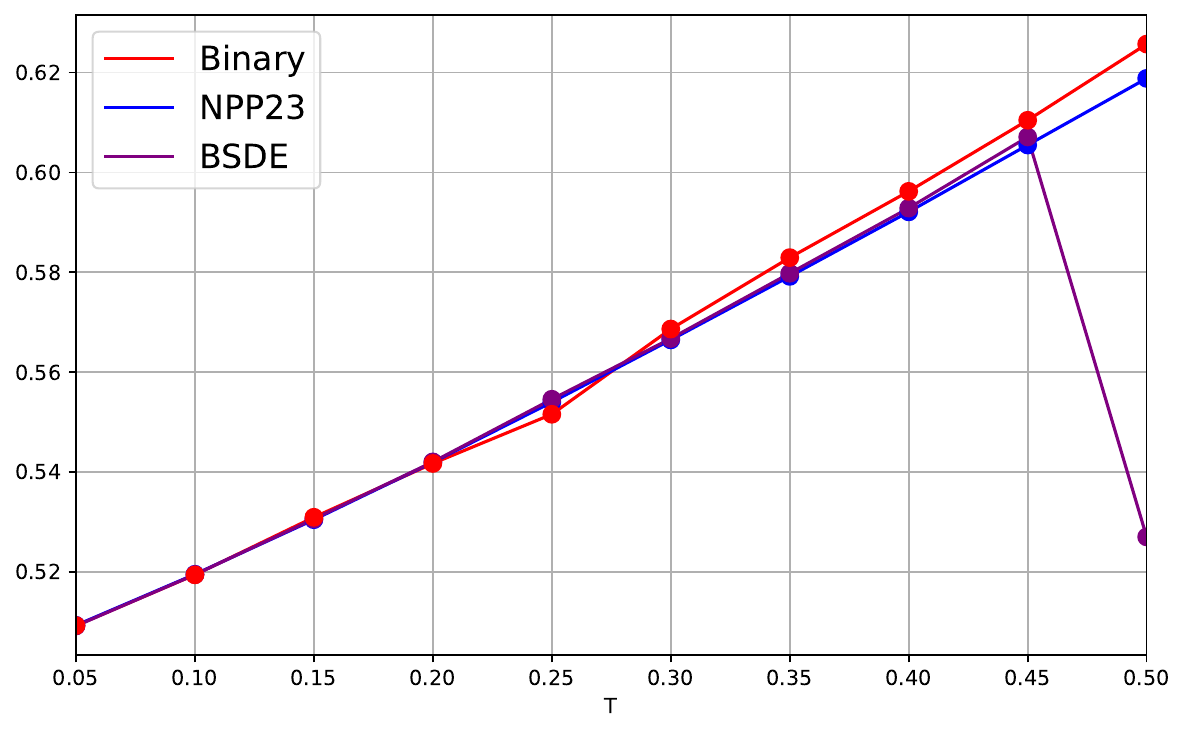}
\vskip-0.1cm
\caption{Dimension $d=1$.}
\end{subfigure}
\begin{subfigure}{.49\textwidth}
  \includegraphics[width=\textwidth]{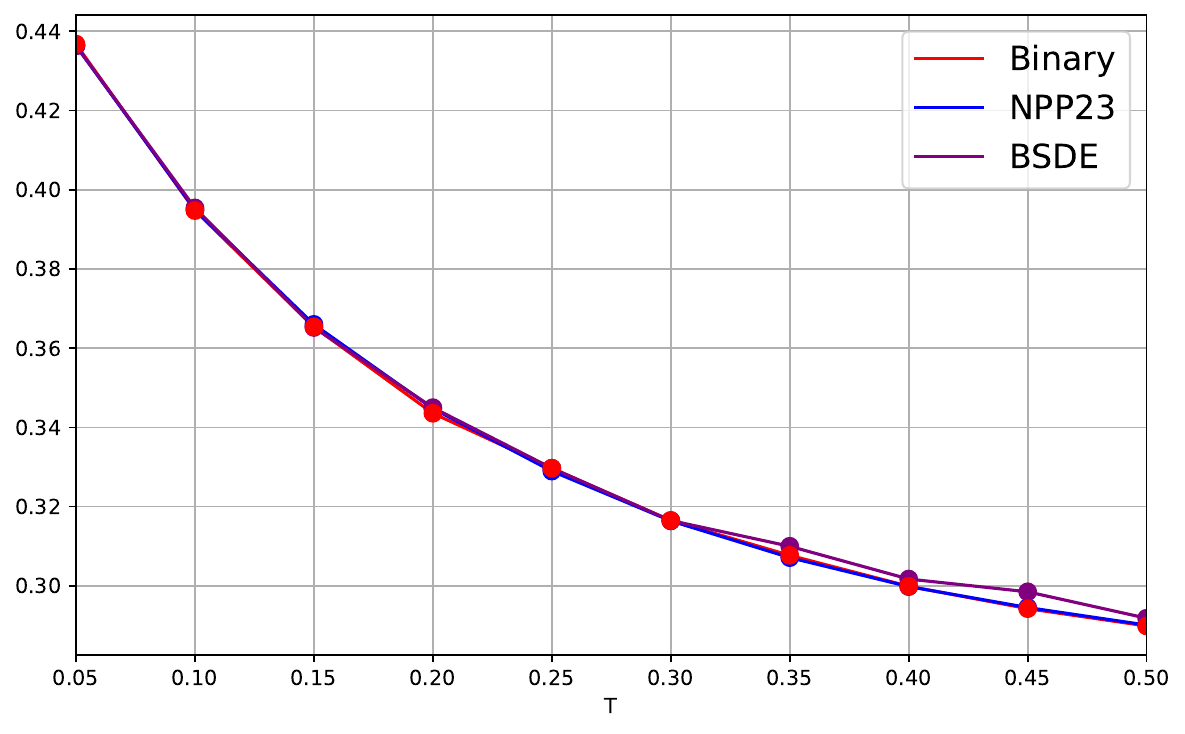}
\vskip-0.1cm
\caption{Dimension $d=10$.}
\end{subfigure}
\begin{subfigure}{.49\textwidth}
  \includegraphics[width=\textwidth]{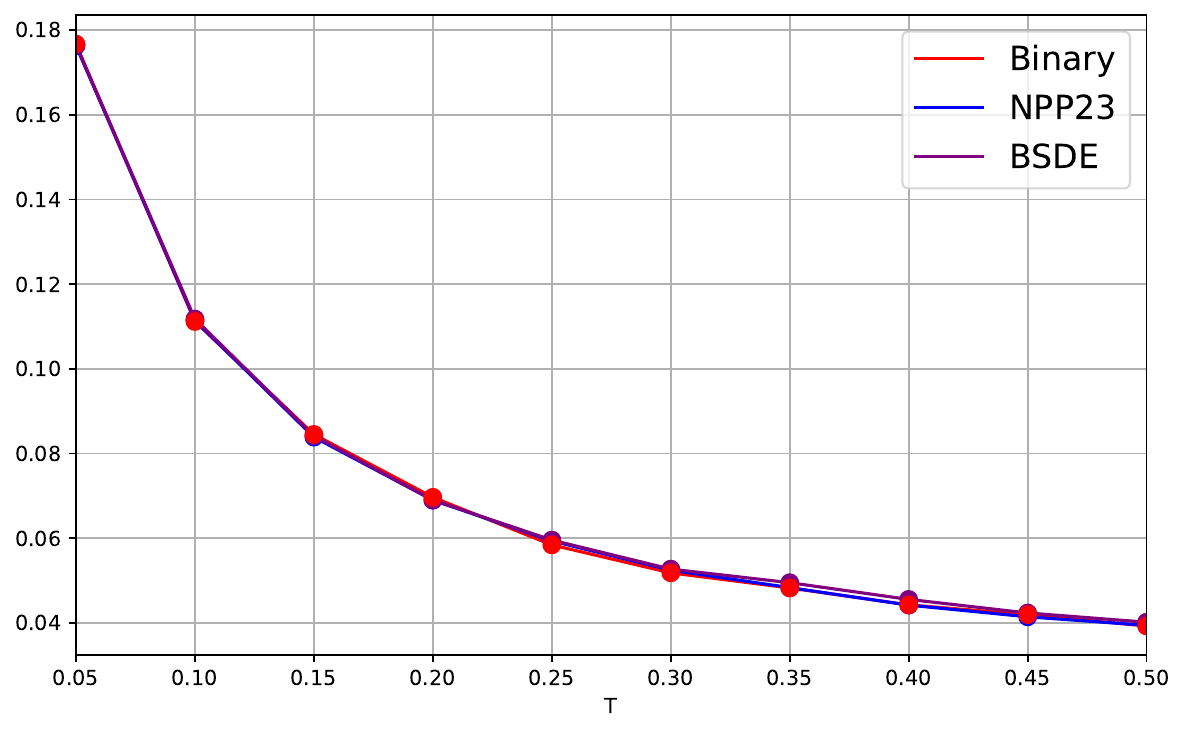}
\vskip-0.1cm
\caption{Dimension $d=100$.}
\end{subfigure}
\begin{subfigure}{.49\textwidth}
  \includegraphics[width=\textwidth]{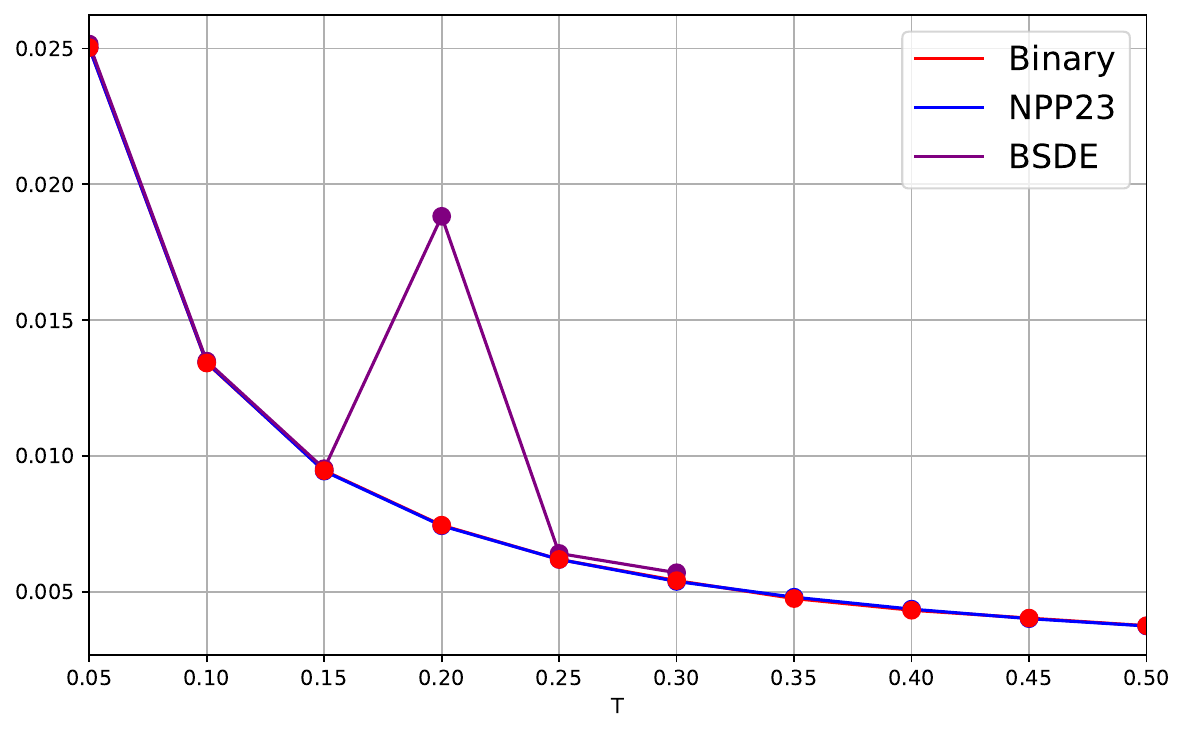}
\vskip-0.1cm
\caption{Dimension $d=1000$.}
\end{subfigure}
\caption{Numerical solution $u(0,x)$ of \eqref{gl0}.} %
\end{figure}

\vspace{-0.5cm}

\begin{table}[H]
  \centering
  \footnotesize
  \begin{tabular}{c c c c c c c }
\hline
Dim & Binary & NPP23 & BSDE & Binary (s) & NPP23 (s) & BSDE (s) \\
\hline
1 & 0.62572 & 0.61885 & 0.52700 & 93.81 & 108.87 & 106.57 \\
10 & 0.28985 & 0.29008 & 0.29181 & 107.40 & 129.91 & 108.86 \\
100 & 0.03925 & 0.03940 & 0.04014 & 64.67 & 35.86 & 126.96 \\
1000 & 0.00375 & 0.00374 & NaN & 267.12 & 45.62 & 304.90 \\
\hline
\end{tabular}

  \caption{Numerical estimates and computation times (s) for \eqref{gl0} at $T=0.5$.}
\end{table}

\vspace{-0.3cm}

\begin{table}[H]
  \centering
  \footnotesize
  \begin{tabular}{c c c c c c c }
\hline
T & Binary & NPP23 & BSDE & Binary (s) & NPP23 (s) & BSDE (s) \\
\hline
0.10 & 0.01342 & 0.01344 & 0.01349 & 5.58 & 2.18 & 300.77 \\
0.20 & 0.00745 & 0.00743 & 0.01882 & 17.53 & 7.97 & 300.93 \\
0.30 & 0.00541 & 0.00538 & 0.00570 & 55.95 & 16.67 & 304.18 \\
0.40 & 0.00432 & 0.00436 & NaN & 168.68 & 28.37 & 300.22 \\
0.50 & 0.00375 & 0.00374 & NaN & 267.12 & 45.62 & 304.90 \\
\hline
\end{tabular}

  \caption{Numerical estimates and computation times (s) for \eqref{gl0} with $d=1000$.}
\end{table}

\item Consider the Allen-Cahn equation
\begin{numcases}{} 
  \label{gl}
  \displaystyle
    \frac{\partial u}{\partial t} u(t,x) + \frac{1}{2} \Delta_x u(t,x) + u(t,x) - u^3(t,x) = 0, 
    \medskip
    \\
    \nonumber
    \displaystyle
   u(T,x) 
   = 
 -\frac{1}{2} + \frac{1}{2}
\tanh \left( \sum_{i=1}^d \frac{x_i}{2 \sqrt{d}} \right),
 \end{numcases}
$(t,x) \in [0,T]\times \real^d$,
with the traveling wave solution
\begin{equation}
\nonumber %
  u(t,x) = -\frac{1}{2} - \frac{1}{2}
\tanh \bigg( \frac{3}{4} (T-t) - \sum_{i=1}^d \frac{x_i}{2 \sqrt{d}} \bigg),
\quad
(t,x) \in [0,T]\times \real^d. 
\end{equation}

\begin{figure}[H]
\centering
\begin{subfigure}{.49\textwidth}
  \includegraphics[width=\textwidth]{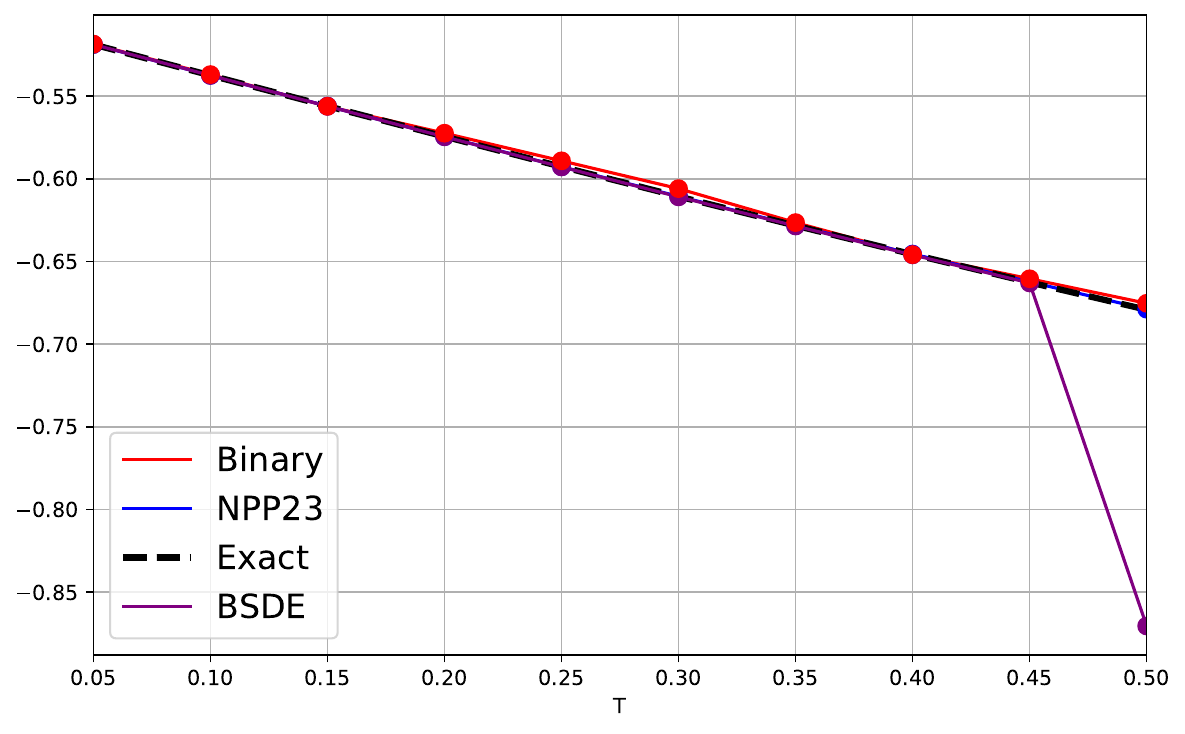}
\vskip-0.1cm
\caption{Dimension $d=1$.}
\end{subfigure}
\begin{subfigure}{.49\textwidth}
  \includegraphics[width=\textwidth]{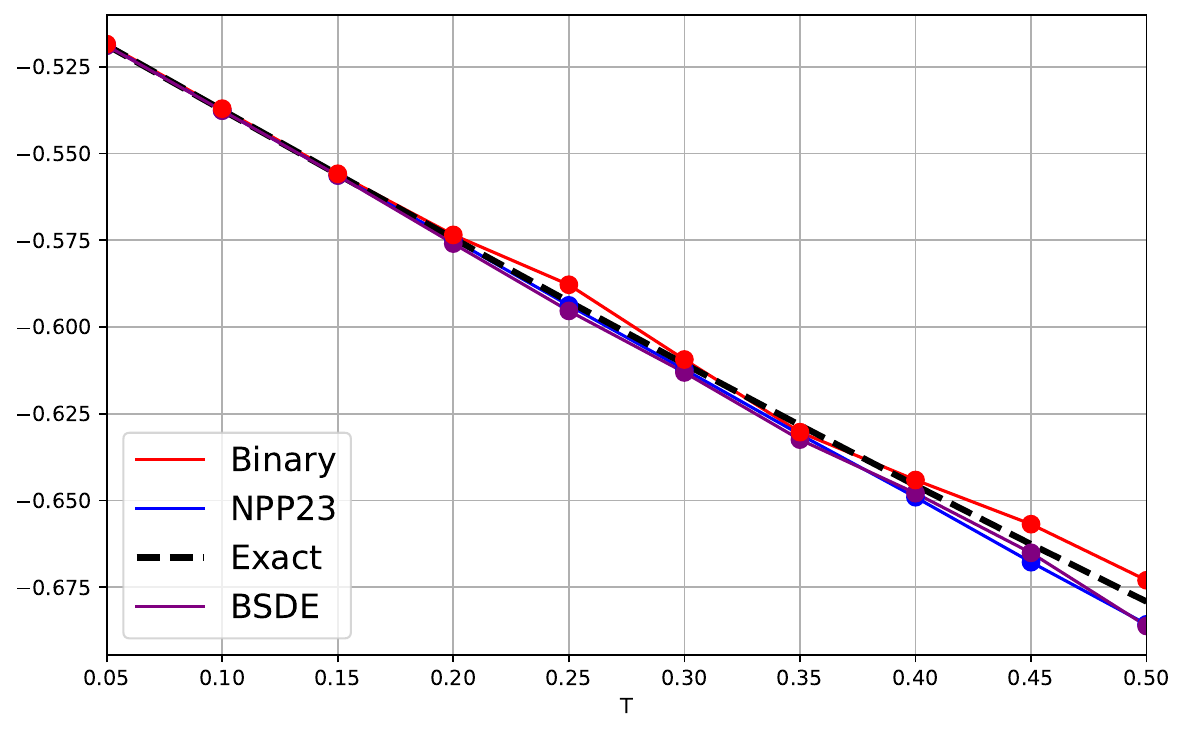}
\vskip-0.1cm
\caption{Dimension $d=10$.}
\end{subfigure}
\begin{subfigure}{.49\textwidth}
  \includegraphics[width=\textwidth]{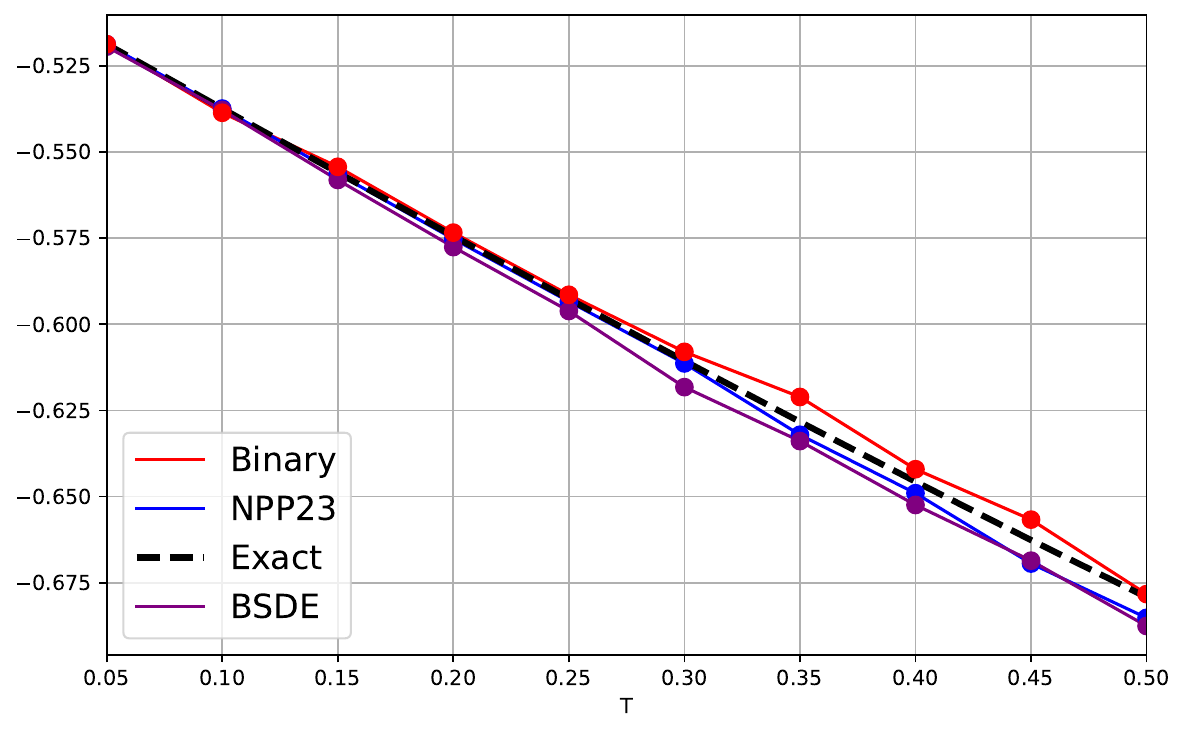}
\vskip-0.1cm
\caption{Dimension $d=100$.}
\end{subfigure}
\begin{subfigure}{.49\textwidth}
  \includegraphics[width=\textwidth]{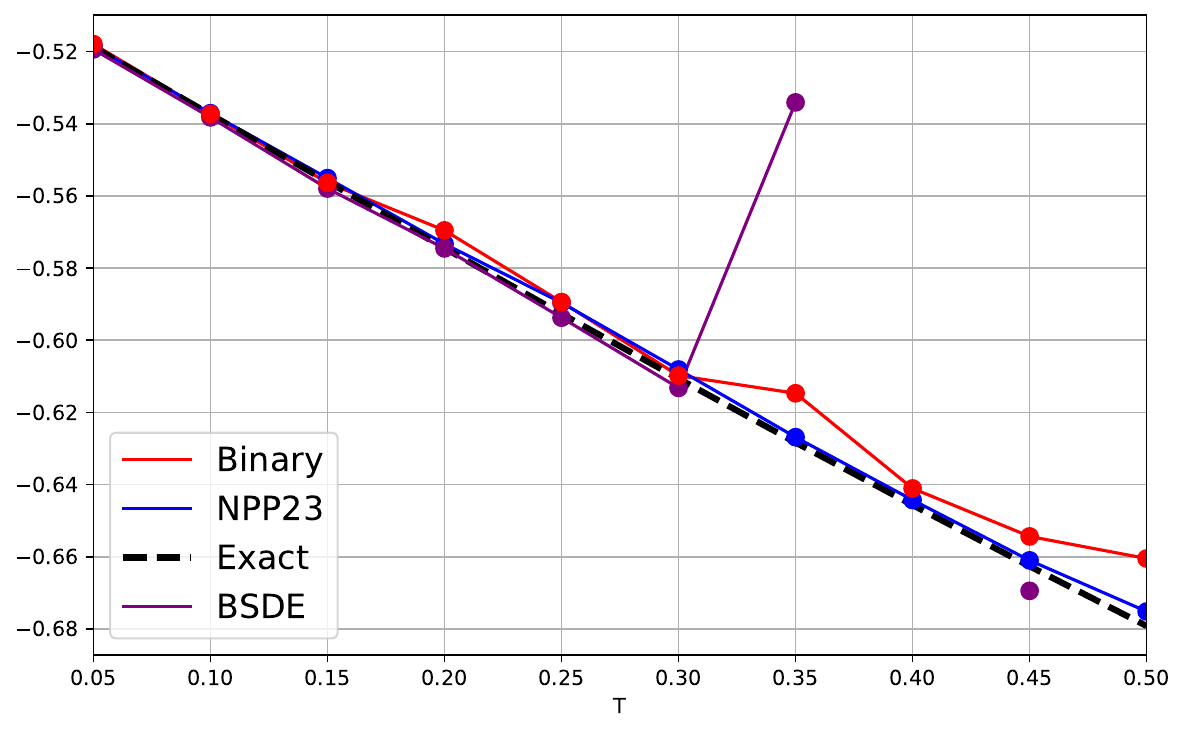}
\caption{Dimension $d=1000$.}
\end{subfigure}
\caption{Numerical solution $u(0,x)$ of \eqref{gl}.} %
\end{figure}

\vspace{-0.5cm}

\begin{table}[H]
  \centering
  \footnotesize
  \begin{tabular}{c c c c c c c c }
\hline
Dim & Binary & NPP23 & BSDE & Exact & Binary (s) & NPP23 (s) & BSDE (s) \\
\hline
1 & -0.67528 & -0.67885 & -0.87043 & -0.67918 & 97.26 & 110.82 & 106.99 \\
10 & -0.67302 & -0.68571 & -0.68620 & -0.67918 & 71.57 & 86.63 & 109.46 \\
100 & -0.67829 & -0.68518 & -0.68754 & -0.67918 & 126.07 & 72.18 & 128.07 \\
1000 & -0.66048 & -0.67516 & NaN & -0.67918 & 347.05 & 64.68 & 300.03 \\
\hline
\end{tabular}

  \caption{Numerical estimates and computation times (s) for \eqref{gl} at $T=0.5$.}
\end{table}

\vspace{-0.3cm}

\begin{table}[H]
  \centering
  \footnotesize
  \begin{tabular}{c c c c c c c }
\hline
T & Binary & NPP23 & BSDE & Binary (s) & NPP23 (s) & BSDE (s) \\
\hline
0.10 & -0.53725 & -0.53706 & -0.53823 & 17.69 & 4.20 & 304.65 \\
0.20 & -0.56954 & -0.57329 & -0.57454 & 56.86 & 16.03 & 295.53 \\
0.30 & -0.60979 & -0.60810 & -0.61319 & 128.76 & 34.54 & 304.73 \\
0.40 & -0.64106 & -0.64427 & NaN & 264.94 & 59.86 & 294.66 \\
0.50 & -0.66048 & -0.67516 & NaN & 347.05 & 64.68 & 300.03 \\
\hline
\end{tabular}

  \caption{Numerical estimates and computation times (s) for \eqref{gl} with $d=1000$.}
\end{table}

\item Consider the Allen-Cahn equation \eqref{gl}
  with the constant terminal condition
  $\phi (x) = \phi $ where $\phi >0$,
 i.e. 
\begin{numcases}{} 
  \label{gl-3}
  \displaystyle
    \frac{\partial u}{\partial t} u(t,x) + \frac{1}{2} \Delta_x u(t,x) + u(t,x) - u^3(t,x) = 0, 
    \medskip
    \\
    \nonumber
    \displaystyle
   u(T,x) 
   = \phi ,
 \end{numcases}
$(t,x) \in [0,T]\times \real^d$,
with the traveling wave solution
\begin{equation}
\nonumber %
 u(t,x) = \frac{1}{\sqrt{1-(1- \phi^{-2}) e^{-2(T-t)}}}, \qquad t\in [0,T]. 
\end{equation} 

\begin{figure}[H]
\centering
\begin{subfigure}{.49\textwidth}
  \includegraphics[width=\textwidth]{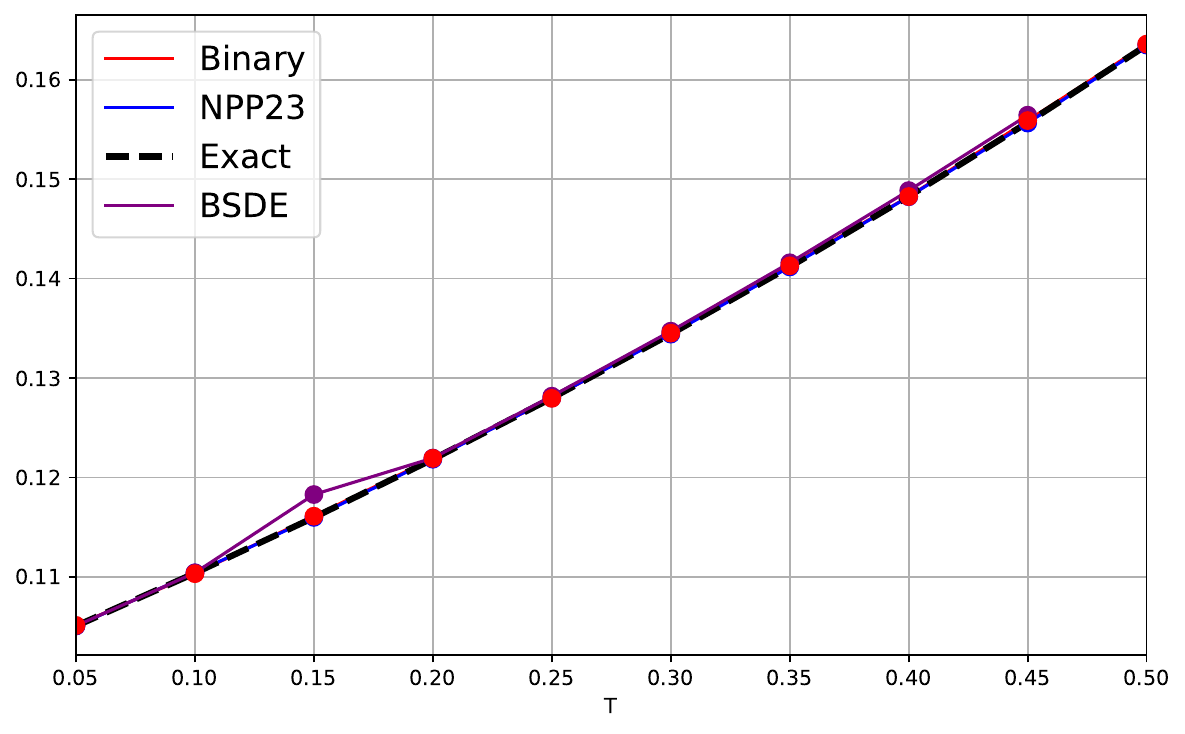}
\vskip-0.1cm
\caption{Dimension $d=1$.}
\end{subfigure}
\begin{subfigure}{.49\textwidth}
  \includegraphics[width=\textwidth]{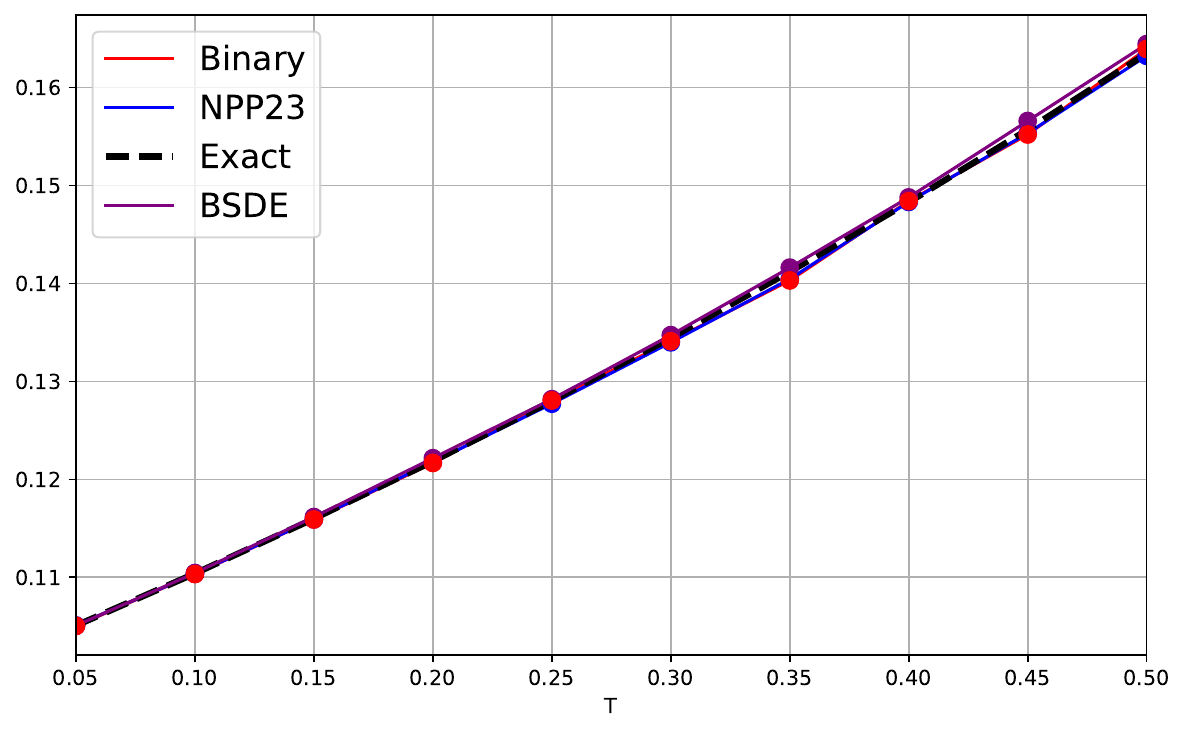}
\vskip-0.1cm
\caption{Dimension $d=10$.}
\end{subfigure}
\begin{subfigure}{.49\textwidth}
\includegraphics[width=\textwidth]{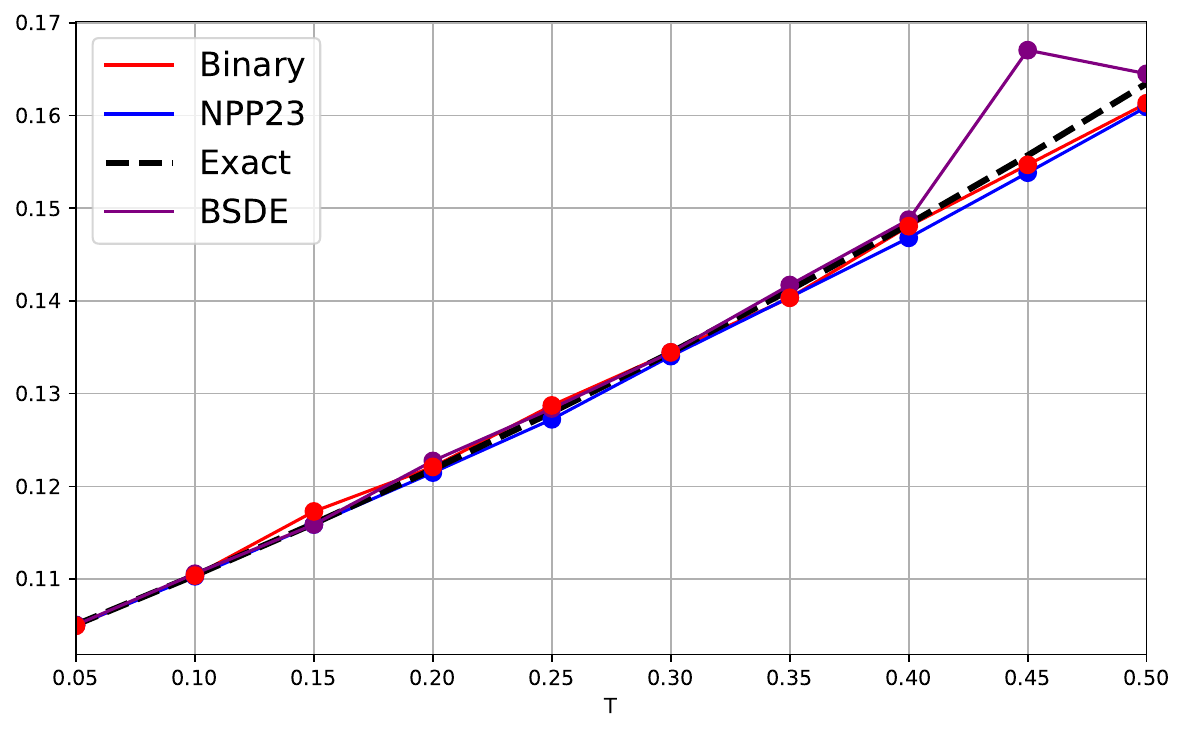}
\vskip-0.1cm
\caption{Dimension $d=100$.}
\end{subfigure}
\begin{subfigure}{.49\textwidth}
\includegraphics[width=\textwidth]{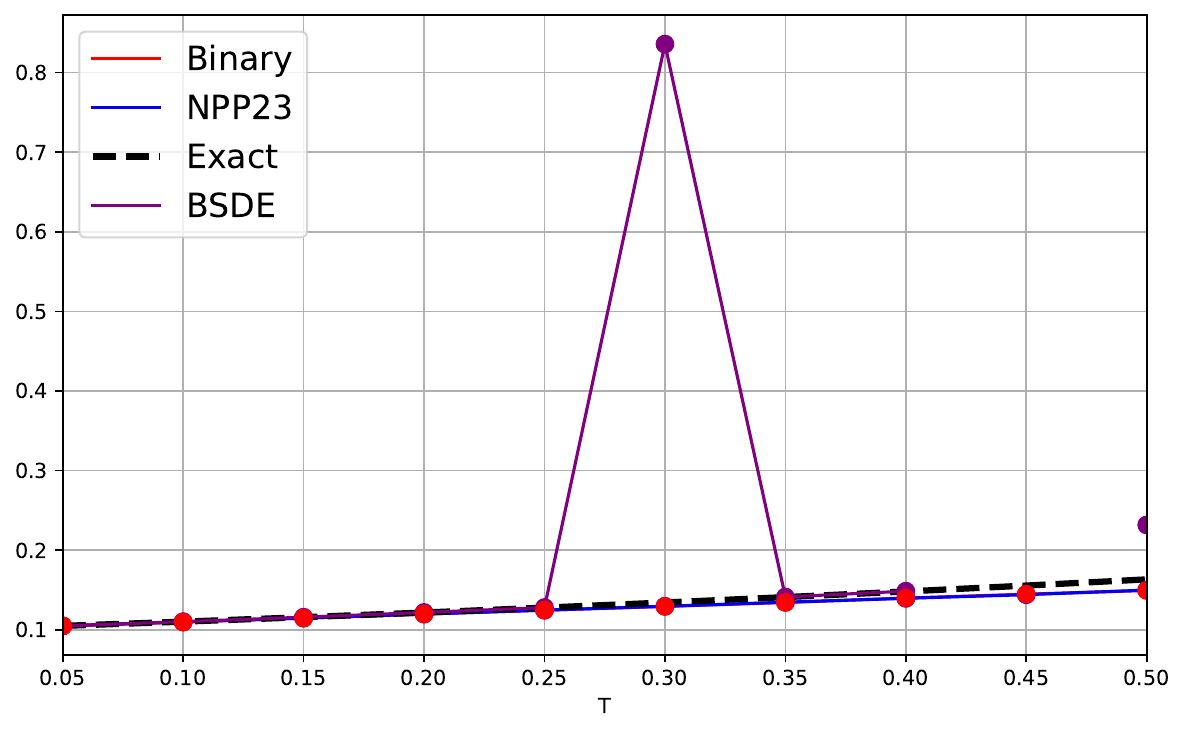}
\vskip-0.1cm
\caption{Dimension $d=1000$.}
\end{subfigure}
\caption{Numerical solution $u(0,x)$ of \eqref{gl-3}.} %
\label{fig1-03}
\end{figure}

\vspace{-0.5cm}

\begin{table}[H]
  \centering
  \footnotesize
  \begin{tabular}{c c c c c c c c }
\hline
Dim & Binary & NPP23 & BSDE & Exact & Binary (s) & NPP23 (s) & BSDE (s) \\
\hline
1 & 0.16359 & 0.16349 & NaN & 0.16347 & 88.36 & 102.63 & 106.98 \\
10 & 0.16393 & 0.16323 & 0.16443 & 0.16347 & 104.12 & 122.73 & 110.26 \\
100 & 0.16133 & 0.16094 & 0.16451 & 0.16347 & 130.05 & 67.40 & 127.26 \\
1000 & 0.14957 & 0.14999 & 0.23187 & 0.16347 & 251.72 & 53.47 & 300.05 \\
\hline
\end{tabular}

  \caption{Numerical estimates and computation times (s) for \eqref{gl-3} at $T=0.5$.}
  \label{t11}
  \vspace{-0.6cm}
\end{table}

\vspace{-0.3cm}

\begin{table}[H]
  \centering
  \footnotesize
  \begin{tabular}{c c c c c c c }
\hline
T & Binary & NPP23 & BSDE & Binary (s) & NPP23 (s) & BSDE (s) \\
\hline
0.10 & 0.10988 & 0.11003 & 0.11039 & 9.79 & 2.53 & 298.51 \\
0.20 & 0.11991 & 0.11994 & 0.12196 & 18.25 & 9.65 & 300.29 \\
0.30 & 0.12954 & 0.12966 & 0.83565 & 59.81 & 19.19 & 299.89 \\
0.40 & 0.13996 & 0.13972 & 0.14868 & 148.34 & 35.68 & 300.39 \\
0.50 & 0.14957 & 0.14999 & 0.23187 & 251.72 & 53.47 & 300.05 \\
\hline
\end{tabular}

  \caption{Numerical estimates and computation times (s) for \eqref{gl-3} with $d=1000$.}
\end{table}

\item
  In the framework of
  Proposition~\ref{fjld1a},
  consider the multidimensional version of 
  \eqref{b2} given by   
\begin{numcases}{} 
  \label{nonl0}
  \displaystyle
    \frac{\partial u}{\partial t} (t,x) + \frac{1}{2} \Delta u (t,x) + 4 e^{-u (t,x) }-10 e^{-u(t,x) / 2}+e^{u (t,x) / 2}-e^{u(t,x) } + 6 =0,
      \medskip
    \\
\nonumber 
    \displaystyle
   u(T,x) %
   = 2 \log \left(
   1
   +
   \frac{1}{1+
     e^{
       (x_1+\cdots + x_d) / \sqrt{d} }} \right),
\end{numcases}
$(t,x) \in [0,T] \times \real$,
 which admits the traveling wave solution
$$
 u(t, x)=
 2 \log \left(
   1
   +
   \frac{1}{1+
     e^{-(T-t) +
 (x_1+\cdots + x_d) / \sqrt{d} }} \right)
 \in (0, 2\log 2), \quad(t, x) \in[0, T] \times \mathbb{R}^d. 
$$
\vspace{-0.5cm}
\begin{figure}[H]
\centering
\begin{subfigure}{.49\textwidth}
  \includegraphics[width=\textwidth]{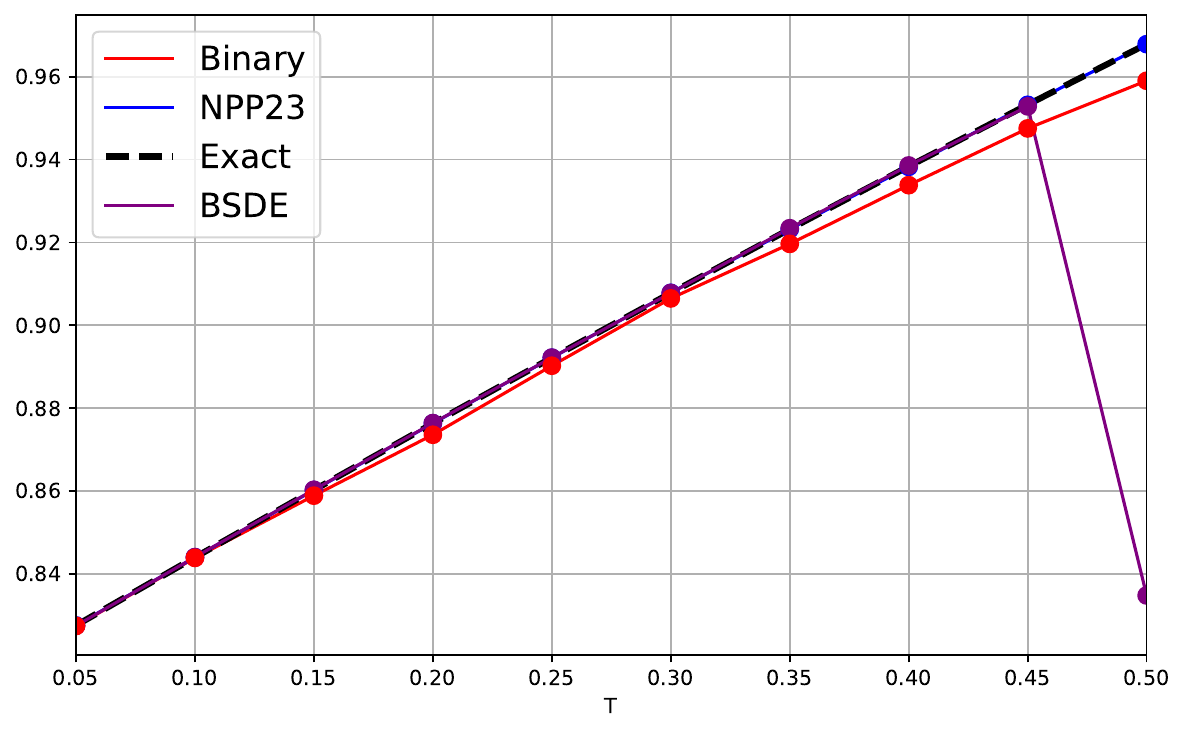}
\vskip-0.1cm
\caption{Dimension $d=1$.}
\end{subfigure}
\begin{subfigure}{.49\textwidth}
  \includegraphics[width=\textwidth]{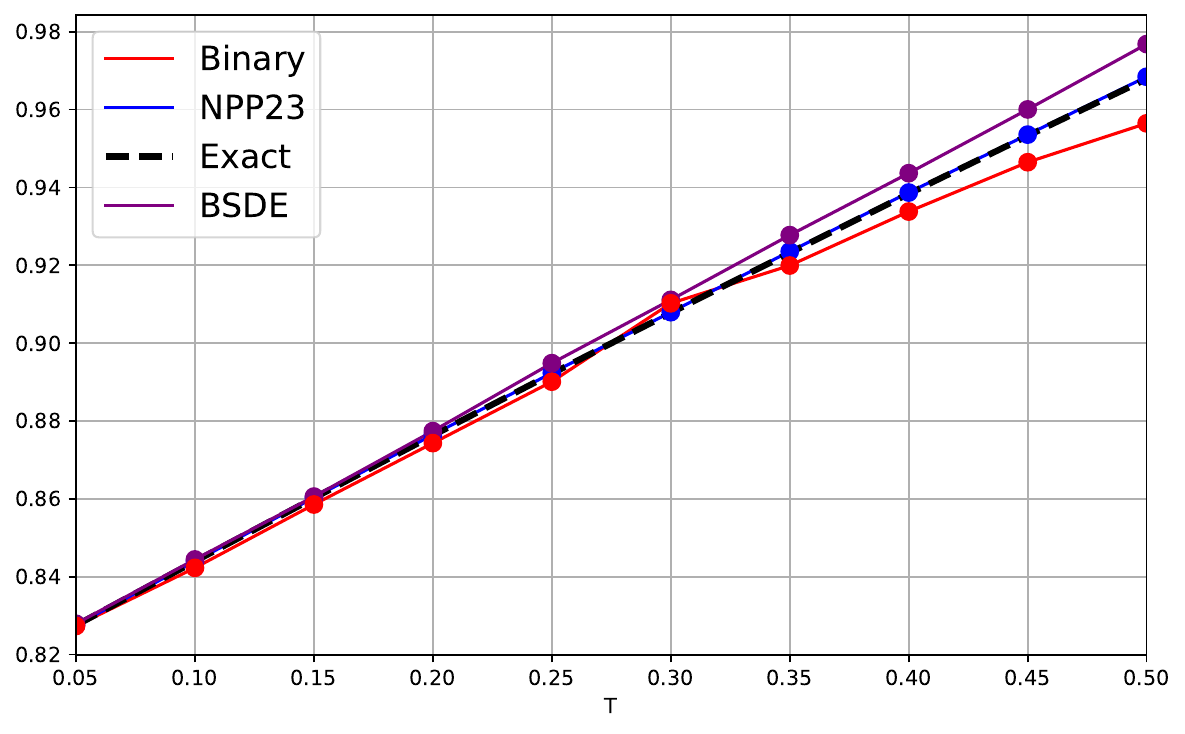}
\vskip-0.1cm
\caption{Dimension $d=10$.}
\end{subfigure}
\begin{subfigure}{.49\textwidth}
  \includegraphics[width=\textwidth]{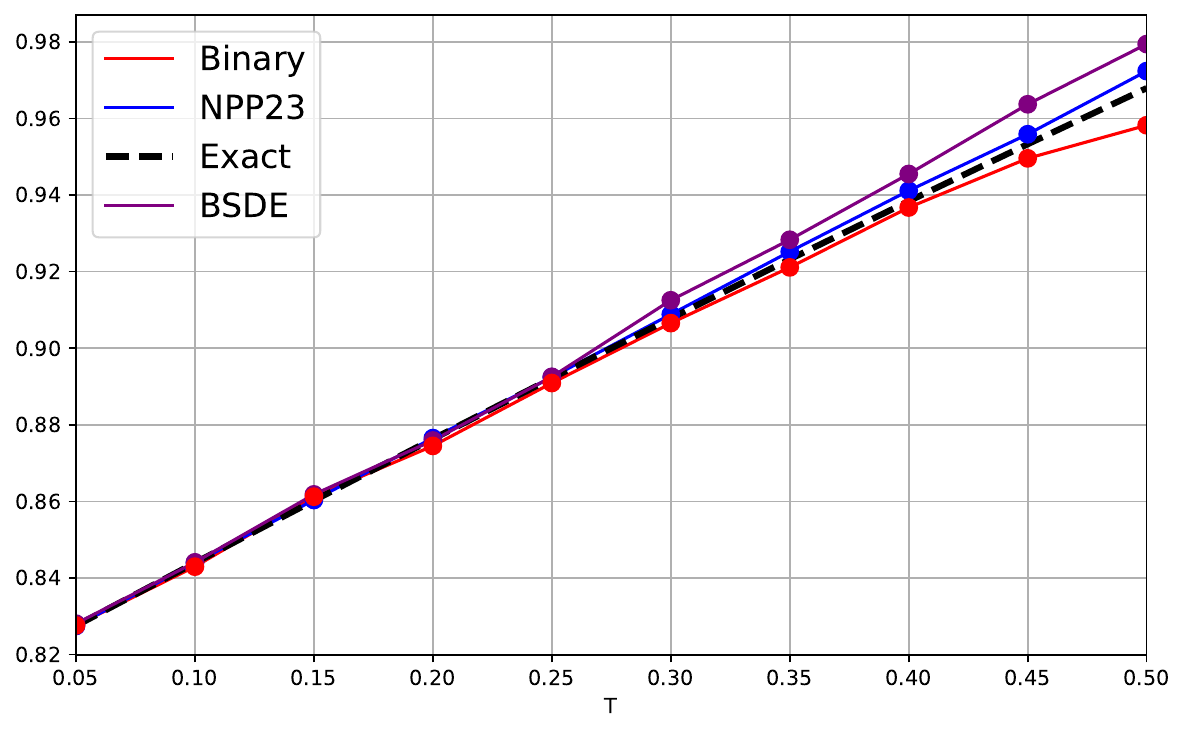}
\vskip-0.1cm
\caption{Dimension $d=100$.}
\end{subfigure}
\begin{subfigure}{.49\textwidth}
  \includegraphics[width=\textwidth]{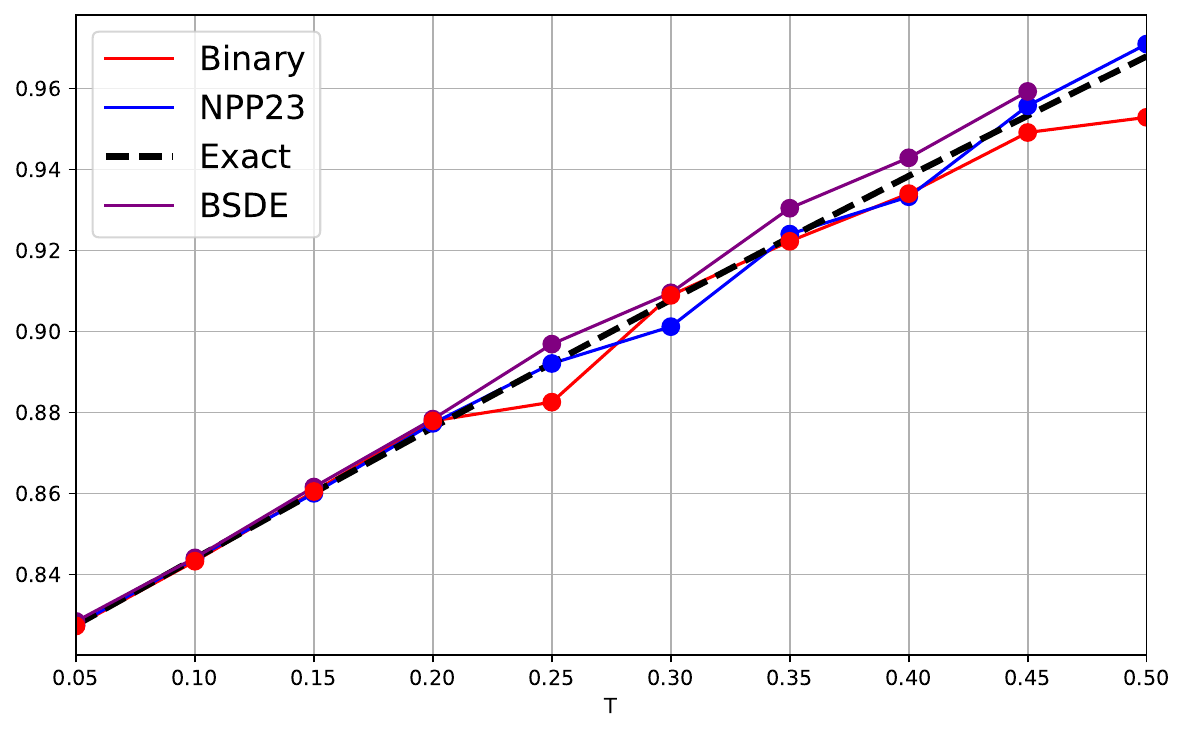}
\vskip-0.1cm
\caption{Dimension $d=1000$.}
\end{subfigure}
\caption{Numerical solution $u(0,x)$ of \eqref{nonl0}.}
\end{figure}

\vspace{-0.5cm}

\begin{table}[H]
  \centering
  \footnotesize
  \begin{tabular}{c c c c c c c c }
\hline
Dim & Binary & NPP23 & BSDE & Exact & Binary (s) & NPP23 (s) & BSDE (s) \\
\hline
1 & 0.95904 & 0.96787 & 0.83476 & 0.96789 & 101.02 & 121.84 & 111.52 \\
10 & 0.95646 & 0.96843 & 0.97682 & 0.96789 & 77.98 & 98.00 & 116.08 \\
100 & 0.95820 & 0.97234 & 0.97939 & 0.96789 & 131.93 & 78.54 & 131.49 \\
1000 & 0.95285 & 0.97092 & NaN & 0.96789 & 366.26 & 65.17 & 303.74 \\
\hline
\end{tabular}

  \caption{Numerical estimates and computation times (s) for \eqref{nonl0} at $T=0.5$.}
\end{table}

\vspace{-0.3cm}

\begin{table}[H]
  \centering
  \footnotesize
  \begin{tabular}{c c c c c c c }
\hline
T & Binary & NPP23 & BSDE & Binary (s) & NPP23 (s) & BSDE (s) \\
\hline
0.10 & 0.84331 & 0.84392 & 0.84413 & 19.07 & 4.23 & 295.62 \\
0.20 & 0.87792 & 0.87732 & 0.87837 & 36.60 & 16.43 & 302.75 \\
0.30 & 0.90891 & 0.90119 & 0.90954 & 130.99 & 35.22 & 305.53 \\
0.40 & 0.93402 & 0.93325 & 0.94286 & 298.98 & 59.20 & 304.80 \\
0.50 & 0.95285 & 0.97092 & NaN & 366.26 & 65.17 & 303.74 \\
\hline
\end{tabular}

  \caption{Numerical estimates and computation times (s) for \eqref{nonl0} with $d=1000$.}
\end{table}

\end{enumerate}

\section{PDE system} 
\label{s4.1}
\subsection{Mild solutions}
\begin{definition}%
 A \emph{mild solution} of the PDE system~\eqref{pde-system} is 
 a family $\{u_c\}_{c\in \C (f)} \subset {\cal C}_b([0,T]\times \R^d)$
 satisfying the system of integral equations 
 \begin{equation}
   \label{pde-system-FK}
  u_c(t) = S(T-t) c(\phi ) + \sum_{z \in \mathcal{M}(c)} z_1 \int_t^T S(s-t) \left( \prod_{i=2}^{|z|} u_{z_i}(s) \right) ds, \quad t\in [0,T], \ c\in \C (f).
\end{equation}
\end{definition} 
\noindent
 The goal of this section is to prove the following result.
\begin{proposition} 
\label{mild-system}
Assume that  for all $c\in \C (f)$, 
\begin{itemize}
  \item $\mathcal{H}_{t,T}\big(\mathcal{X}_t^x(c)\big)$ is $L^1$-integrable uniformly in $(t,x)\in[0,T]\times\R^d$, 
    and
    \item $c(\phi )$ is bounded on $\real^d$, 
\end{itemize}
 and consider the family $\{u_c\}_{c\in\C (f)}$ of functions defined as 
   \begin{equation}
  \label{uc}
    u_c(t,x) := \E \big[ \mathcal{H}_{t,T}\big(\mathcal{X}_t^x(c)\big) \big], \quad (t,x)\in[0,T]\times\R^d, \ c\in \C (f). 
 \end{equation}
  Then, the following assertions hold.   
  \begin{enumerate}[i)] 
  \item
 The family $\{u_c\}_{c\in\C (f)}$ 
 yields a mild solution of the PDE system~\eqref{pde-system}. 
         \item
 Let 
           $
             \mathcal{B} \subset {\cal C}_b(\R^d)^{\C (f)}
           $
 be a set of bounded continuous functions such that: 
\begin{enumerate}[a)]
\item
  the PDE~\eqref{nl-heat} admits a classical solution
  $u\in {\cal C}^{1,\infty}([0,T]\times\R^d)$ that satisfies 
$$
 \{c(u)(t)\}_{c\in \C (f)} \in \mathcal{B},
 \quad t\in [0,T], 
$$
 and
  \item
  the family 
  $\{u_c(t)\}_{c\in\C (f)}$ in \eqref{uc}
  is the unique mild solution
  of the PDE system~\eqref{pde-system} that satisfies 
$$
 \{u_c(t)\}_{c\in \C (f)} \in \mathcal{B},
 \quad t\in [0,T]. 
$$
\end{enumerate}
 Then, we have $c(u) = u_c$ for all $c\in\C (f)$,
 and in particular,
 $u = u_{\id}$ is a classical solution of the PDE~\eqref{nl-heat}.
\end{enumerate}
\end{proposition}
\noindent
 Lemma~\ref{classical-sol} shows that
 if the PDE~\eqref{nl-heat} admits a classical solution $u\in {\cal C}^{1,\infty}([0,T]\times \R^d)$, then $\{c(u)\}_{c\in\C (f)}$ is also a mild solution
 of the PDE system~\eqref{pde-system}. 
 However, if $u$ is only a mild solution of the PDE~\eqref{nl-heat}
 then $\{c(u)\}_{c\in\C (f)}$ is not necessarily a mild solution of \eqref{pde-system}.
\begin{lemma}
\label{classical-sol}
 Assume that the PDE~\eqref{nl-heat} admits
 a classical solution $u\in {\cal C}^{1,\infty}([0,T]\times \R^d)$.
 Then, the family $\{c(u)\}_{c\in \C (f)}$
 is a classical solution of the PDE system~\eqref{pde-system}. 
\end{lemma}
\begin{Proof}
 For any test function $g\in {\cal C}^\infty(\R )$   
 we apply the operator $\displaystyle
 \frac{\partial}{\partial t} +\frac{1}{2} \Delta$
 to $g(u)$, and get
\begin{align*}
    \left( \frac{\partial}{\partial t} +\frac{1}{2} \Delta \right) g(u) &= g'(u) \left( \frac{\partial u}{\partial t} + \frac{1}{2} \Delta u \right) + \frac{1}{2} \sum_{i=1}^d \pt_i [ g'(u) ] \pt_i u \\
    &= - g'(u) f(u) + \frac{1}{2} \sum_{i=1}^d \pt_i [ g'(u) ] \pt_i u,
\end{align*}
 and thus, by the Leibniz formula, for all $\alpha\in\N_0^d$ we have 
\begin{align*}
  \left( \frac{\partial}{\partial t} +\frac{1}{2} \Delta \right) \pt^\alpha [g(u)] & = - \sum_{\mathbf{0}\leq \beta\leq \alpha} \binom{\alpha}{\beta} \pt^\beta [g'(u)] \pt^{\alpha-\beta} [f(u)]
  \\
  & \quad + \frac{1}{2} \sum_{i=1}^d \sum_{\mathbf{0}\leq \beta\leq \alpha} \binom{\alpha}{\beta} \pt^{\beta+\ind_i} [g'(u)] \pt^{\alpha-\beta+\ind_i} u,
\end{align*}
 which is \eqref{pde-system}
 for $c = {\alpha!}^{-1} \pt^\alpha \circ g^{(j)}$. 
 Similarly, we apply the PDE~\eqref{nl-heat} to derive
\begin{equation*}%
  \left( \frac{\partial}{\partial t} +\frac{1}{2} \Delta \right) \pt^\alpha u = \pt^\alpha \left( \frac{\partial}{\partial t} +\frac{1}{2} \Delta \right) u = -\pt^\alpha [f(u)],
  \quad
 \alpha\in\N_0^d, 
\end{equation*}
 which is \eqref{pde-system}
    for $c = {\alpha!}^{-1} \pt^\alpha$.
\end{Proof}
 \noindent 
\begin{lemma}
\label{lemma-uc}
Assume that $\mathcal{H}_{t,T}\big(
\mathcal{X}_t^x(c)\big)$
is $L^1$-integrable for all $(t,x)\in[0,T]\times\R^d$ and 
$c\in \C (f)$.
  Then, the family $\{u_c\}_{c\in\C (f)}$
  of functions defined in \eqref{uc}
  satisfies the recursive relation
\begin{equation}\label{rec-uc}
\begin{split}
 u_c(t,x) &= \E \left[ c(\phi ) \big(X_T^\varnothing\big)+ \int_t^T \sum_{z \in \mathcal{M}(c)} z_1 \prod_{i=2}^{|z|} u_{z_i}(s, X^\varnothing_s) ds \right], \quad (t,x)\in[0,T]\times\R^d, 
\end{split}
\end{equation}
 for every $c\in \C (f)$. 
\end{lemma}
\begin{proof}
 From \eqref{functional}, we have 
\begin{align}
\nonumber %
    \mathcal{H}_{t,T}\big(\mathcal{X}_t^x(c)\big) & = \mathcal{H}_{t,T}\big(\mathcal{X}_t^x(c)\big) \big( \ind_{\{T_\varnothing>T\}} + \ind_{\{T_\varnothing\leq T\}} \big)
    \\
    \nonumber
    & = \frac{c(\phi ) \big(X_T^\varnothing\big)}{\bar\rho(T-T_{\varnothing^{-}})} \ind_{\{T_\varnothing>T\}}
    \\
    \nonumber
    & \quad
    + \ind_{\{T_\varnothing\leq T\}} \frac{I_1(c)}{\rho(\tau_\varnothing) q_c(I(c))} \prod_{{\mathbf{k}} \in \mathcal{K}^{\rm b}_{t,T}(c)} c^{{\mathbf{k}}}(\phi ) \big(X_T^{{\mathbf{k}}}\big) \prod_{{\mathbf{k}} \in \mathcal{K}^\circ_{t,T}(c) \setminus \{\varnothing\}} \frac{I_1(c^{{\mathbf{k}}})}{\rho(\tau_{\mathbf{k}}) q_{c^{{\mathbf{k}}}}(I(c^{{\mathbf{k}}}))}
        \nonumber
\\
\nonumber
& = \frac{c(\phi ) \big(X_T^\varnothing\big)}{\bar\rho(T-t)} \ind_{\{T_\varnothing>T\}} + \ind_{\{T_\varnothing\leq T\}} \frac{I_1(c)}{\rho(\tau_\varnothing) q_c(I(c))} \prod_{i=2}^{|I(c)|} \mathcal{H}_{T_\varnothing , T}\Big(\mathcal{X}_{T_\varnothing}^{X_{T_\varnothing}^\varnothing} ( I_i(c) )\Big), 
\end{align} 
 hence %
\begin{align*}
  &  u_c(t,x) = \E \left[ \mathcal{H}_{t,T}\big(\mathcal{X}_t^x(c)\big) \right]
    \\
    & = \E \left[ \frac{c(\phi )\big(X_T^\varnothing\big)}{\bar\rho(T-t)} \ind_{\{\tau_\varnothing>T-t\}} + \ind_{\{\tau_\varnothing\leq T-t\}} \frac{I_1(c)}{\rho(\tau_\varnothing) q_c(I(c))} \prod_{i=2}^{|I(c)|} \mathcal{H}_{t+\tau_\varnothing ,T}
      \Big(
      \mathcal{X}_{t+\tau_\varnothing}^{X_{t+\tau_\varnothing}^\varnothing} ( I_i(c))
      \Big) \right]
    \\
    & = \E \left[
      c(\phi )\big(X_T^\varnothing\big)
     + \ind_{\{\tau_\varnothing\leq T-t\}} \frac{I_1(c)}{\rho(\tau_\varnothing) q_c(I(c))} \prod_{i=2}^{|I(c)|}
      \E \left[
        \mathcal{H}_{t+\tau_\varnothing ,T}
      \Big(
      \mathcal{X}_{t+\tau_\varnothing}^{X_{t+\tau_\varnothing}^\varnothing} (I_i(c))
      \Big)
      \ \! \Big| \ \!
      \tau_\varnothing,
      \
      X_{t+\tau_\varnothing}^\varnothing
      ,
      \
      I_i(c)
      \right]
      \right]
    \\
    & = \E \left[
      c(\phi )\big(X_T^\varnothing\big)
     + \ind_{\{\tau_\varnothing\leq T-t\}} \frac{I_1(c)}{\rho(\tau_\varnothing) q_c(I(c))} \prod_{i=2}^{|I(c)|}
     u_{I_i(c)}
     \big( t+\tau_\varnothing
     ,
     X_{t+\tau_\varnothing}^\varnothing
     \big)
      \right]
    \\
    & = \E \left[ c(\phi )\big(X_T^\varnothing\big)
          + 
          \int_0^{T-t} \frac{I_1(c)}{q_c(I(c))} \prod_{i=2}^{|I(c)|}
          u_{I_i(c)}\big(
          t+s ,X^\varnothing_{t+s} \big) ds \right]
    \\
    & = \E \left[ c(\phi )\big(X_T^\varnothing\big)
          + 
            \int_t^T
            \sum_{z \in \mathcal{M}(c)} z_1
            \prod_{i=2}^{|z|}
            u_{z_i} \big(s , X^\varnothing_s \big) ds \right]
    .
\end{align*}
\end{proof}
 
\begin{Proofy} \hskip-0.14cm {\em of Proposition~\ref{mild-system}.} 
  $i)$ By assumption, the functions
  $\{u_c\}_{c\in \C (f)}$ defined in
  \eqref{uc} 
  are bounded for all $c\in \C (f)$,
  and they satisfy \eqref{rec-uc} by Lemma~\ref{lemma-uc}. 
 In addition, \eqref{rec-uc} is equivalent to the
 definition~\eqref{pde-system-FK} of mild solutions
 after exchange of expectation and integral
 by dominated convergence, since
 $X^\varnothing$ is a $d$-dimensional
 standard Brownian motion starting from $x\in \R^d$. 
 Finally, we note that, by dominated convergence, 
 for any bounded measurable function $F: [0,T]\times \R^d \to \R$, the map
\begin{equation*}
 (s,x)\mapsto \E\big[ F\big(s, X_s^\varnothing\big) \mid X_0^\varnothing = x
    \big]=
     \int_{\R^d} F(s,y) p_0(s,x-y) dy
\end{equation*}
is continuous on $[t,T] \times \R^d$, 
 since $p_0(s,x-y)$ is continuous in $(s,x,y)$ and integrable in $y$.
 Hence, by \eqref{rec-uc} the function $u_c$ defined in \eqref{uc}
 belongs to ${\cal C}_b([0,T]\times\R^d)$ for all $c\in \C (f)$.

 \smallskip
  
\noindent
$ii)$ follows from $i)$ and Lemma~\ref{classical-sol}.
\end{Proofy}
\noindent
\subsection{Uniqueness of PDE system solution}
\subsubsection{Probabilistic approach}
\noindent
In Proposition~\ref{unf-intg}
we obtain the uniqueness and probabilistic representation
of the solution of the PDE system~\eqref{pde-system} under 
 uniform integrability assumptions on random functionals. 
\begin{proposition}\label{unf-intg}
  Assume that
  \begin{enumerate}[i)]
  \item
    the PDE system~\eqref{pde-system} has a mild solution $\{u_c\}_{c\in\C (f)} \subset {\cal C}_b([0,T]\times \R^d)$, %
    and that
  \item
    for each $(t,x)\in[0,T]\times\R^d$ and $c\in\C (f)$, the sequence of random variables %
\begin{equation*}
  \mathcal{H}_{t,T,n}^x(c) := \prod_{{\mathbf{k}} \in \cup_{i=0}^n \mathcal{K}^{{\rm b},i}_{t,T}(c)} \frac{c^{{\mathbf{k}}}(\phi ) \big(X_T^{{\mathbf{k}}} \big) }{\bar\rho(T-T_{{\mathbf{k}}^{-}})} \prod_{{\mathbf{k}} \in \cup_{i=0}^n \mathcal{K}^{\circ,i}_{t,T}(c)} \frac{I_1( c^{{\mathbf{k}}})}{\rho (\tau_{\mathbf{k}}) q_{c^{{\mathbf{k}}}}
    (I( c^{{\mathbf{k}}}) )} \prod_{{\mathbf{k}} \in \mathcal{K}^{n+1}_{t,T}(c)}
  \hskip-0.3cm
  u_{c^{{\mathbf{k}}}} \big(T_{{\mathbf{k}}^{-}}, X_{T_{{\mathbf{k}}^{-}}}^{{\mathbf{k}}}\big) 
  \end{equation*}
 $n\geq 0$, is uniformly integrable.
  \end{enumerate}
   Then, $\{u_c\}_{c\in\C (f)}$ can be represented as 
 $$
  u_c(t,x) := \E \left[ \mathcal{H}_{t,T}(\mathcal{X}_t^x(c)) \right],
  \quad
   (t,x)\in[0,T]\times\R^d, \ \ c\in\C (f).
    $$ 
In particular, uniqueness of mild solutions holds for the PDE system~\eqref{pde-system}.
\end{proposition}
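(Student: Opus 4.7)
The plan is to prove by induction on $n\geq 0$ that
\begin{equation*}
 u_c(t,x) = \E\big[ \mathcal{H}_{t,T,n}^{x,c} \big],
 \quad (t,x,c)\in [0,T]\times \R^d\times \C(f),
\end{equation*}
and then to pass to the limit $n\to\infty$, combining the uniform integrability hypothesis with the almost-sure finiteness of the tree $\mathcal{X}_t^{x,c}$ on $[t,T]$. Uniqueness of mild solutions satisfying the hypothesis is immediate once this representation is established, since any two such solutions must agree with the common value $\E[\mathcal{H}_{t,T}(\mathcal{X}_t^{x,c})]$.

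For the base case $n=0$, I would retrace the derivation used in the proof of Lemma~\ref{lemma-uc} starting from the mild-solution identity~\eqref{pde-system-FK}: rewriting $S(T-t) c(\phi)(x)$ and the internal time integral as Brownian expectations along $X^\varnothing = x+B^\varnothing_{\cdot-t}$, and inserting the independent lifetime $\tau_\varnothing$ with density $\rho$ and the offspring selection $I^c$ with mass function $q_c$ via the importance-sampling identities
\begin{equation*}
  \int_t^T g(s)\,ds = \E\!\left[\frac{g(t+\tau_\varnothing)}{\rho(\tau_\varnothing)}\mathbf{1}_{\{\tau_\varnothing\le T-t\}}\right],
  \qquad
  \sum_{z\in\M(c)} z_1\,\psi(z) = \E\!\left[\frac{I^c_1\,\psi(I^c)}{q_c(I^c)}\right],
\end{equation*}
together with the normalization $\E[c(\phi)(X_T^\varnothing)] = \E[c(\phi)(X_T^\varnothing)\mathbf{1}_{\{\tau_\varnothing>T-t\}}]/\bar\rho(T-t)$. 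Assigning $\varnothing$ to $\mathcal{K}^{\pt,0}_{t,T}(c)$ or $\mathcal{K}^{\circ,0}_{t,T}(c)$ according to whether $\tau_\varnothing>T-t$, and interpreting the generation-one offsprings as starting at $X^\varnothing_{T_\varnothing}$ at time $T_\varnothing$, produces exactly $\E[\mathcal{H}_{t,T,0}^{x,c}]$. For the inductive step, I would condition on the $\sigma$-algebra generated by the labels, codes, lifetimes $(\tau_\mathbf k)$, offspring selections $(I^{c^\mathbf k})$ and Brownian paths $B^\mathbf k$ up to the birth times $T_{\mathbf k^-}$ of the generation-$(n+1)$ branches, and apply the base identity separately to each factor $u_{c^\mathbf k}(T_{\mathbf k^-},X^\mathbf k_{T_{\mathbf k^-}})$. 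Mutual independence of the families $(B^\mathbf k)_{\mathbf k\in\mathbb{K}}$, $(\tau_\mathbf k)_{\mathbf k\in\mathbb{K}}$, $(I^{c^\mathbf k})_{\mathbf k\in\mathbb{K}}$ together with the strong Markov property of Brownian motion then repackage the nested expectations into $\E[\mathcal{H}_{t,T,n+1}^{x,c}]$ with the required product structure.

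To conclude, under Assumption~\hyperlink{H}{\bf H}-\hyperlink{asmpt-1}{\rm ii)} the original lifetimes stochastically dominate the exponential lifetimes $\widetilde\tau_\mathbf k$ of the Yule process introduced in Section~\ref{s5}, so a standard coupling embeds the original tree pathwise inside the Yule tree; by Proposition~\ref{progeny} the Yule total progeny is geometric and hence a.s. finite, so $\mathcal{K}^{n+1}(c)=\emptyset$ for all $n$ sufficiently large a.s., giving $\mathcal{H}_{t,T,n}^{x,c}\to\mathcal{H}_{t,T}(\mathcal{X}_t^{x,c})$ a.s. The uniform integrability hypothesis then yields $u_c(t,x)=\lim_n\E[\mathcal{H}_{t,T,n}^{x,c}]=\E[\mathcal{H}_{t,T}(\mathcal{X}_t^{x,c})]$. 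The main obstacle will be the bookkeeping in the inductive step: one must verify that the freshly sampled $(B^\mathbf k,\tau_\mathbf k,I^{c^\mathbf k})$ attached to generation-$(n+1)$ branches are independent of the conditioned past, and that after applying the base identity branch-by-branch the resulting nested expectations assemble into exactly the weight product defining $\mathcal{H}_{t,T,n+1}^{x,c}$. This is a standard strong-Markov / tower-property argument for tree-indexed random processes, but the precise algebraic matching to \eqref{functional} must be checked carefully.
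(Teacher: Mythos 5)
Your proposal is correct and follows essentially the same route as the paper: an induction establishing $u_c(t,x)=\E[\mathcal{H}_{t,T,n}^{x,c}]$ via the Feynman--Kac form of the mild-solution identity together with conditional independence of offspring branches, followed by passage to the limit under uniform integrability. The one small addition on your side is an explicit justification of the a.s.\ convergence $\mathcal{H}_{t,T,n}^{x,c}\to\mathcal{H}_{t,T}(\mathcal{X}_t^{x,c})$ via Yule domination, which the paper asserts without comment; note however that this invokes Assumption~\hyperlink{H}{\bf H}-\hyperlink{asmpt-1}{ii)}, which is not among the stated hypotheses of the proposition, so if you keep that route you should flag the extra assumption (or instead argue non-explosion of the age-dependent binary tree on $[t,T]$ directly from $\rho$ being a density).
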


\begin{proof}
  By the definition~\eqref{pde-system-FK} of mild solutions and recalling that
  $( S(t) )_{t\geq 0}$
  is the heat semigroup generated by the Brownian motion $X^\varnothing$,
  we have the Feynman--Kac representation 
\begin{equation}\label{eqn-1}
  \begin{split}
    u_c(t,x) &= \E \left[ u_c \big(T, X_T^\varnothing\big) + \int_t^T \sum_{z \in \mathcal{M}(c)} z_1 \prod_{i=2}^{|z|} u_{z_i}(s, X^\varnothing_s) ds \right] \\
    &= \E \left[ \frac{c(\phi ) \big(X_T^\varnothing\big)}{\bar\rho(T-t)} \ind_{\{ T_\varnothing>T \}} + \ind_{\{T_\varnothing\leq T \}}\frac{I_1(c)}{\rho(\tau_\varnothing) q_c(I(c))} \prod_{i=2}^{|I(c)|} u_{I_i(c)}(T_\varnothing,X_{T_\varnothing}^\varnothing ) \right],
  \end{split}
\end{equation}
where in the first equality we used the dominated convergence theorem to exchange expectation and integral,
and from which it follows that 
 $$
u_c (t,x) = \E[\mathcal{H}_{t,T,0}^x(c)],   
  \quad
   (t,x)\in[0,T]\times\R^d, \ \ c\in\C (f).
$$ 
 Next, since each offspring has the same dynamic as the parent branch, we can repeat the above calculations to write for
 the branch $k \in \mathcal{K}^1_{t,T}(c)$ with code $c^k \in\C (f)$,
\begin{align*}
  u_{c^k}(T_\varnothing,X_{T_\varnothing}^\varnothing ) & = u_{c^k}
  \big( T_{k-},X_{T_{k-}}^k \big)
  \\
   & = \E \left[ \frac{c^k (\phi ) \big(X_T^k \big)}{\bar\rho (T-T_{k-})} \ind_{\{ T_k>T \}} + \ind_{\{T_k\leq T \}} \frac{I_1(c^k)}{\rho\big(\tau^k\big) q_{c^k}(I(c^k))} \prod_{i=2}^{|I( c^k)|} u_{I_i(c^k)}(T_k,X_{T_k}^k ) \Bigg| \F_{T_\varnothing}^{X^\varnothing} \right].
\end{align*}
 Plugging the above identity back into \eqref{eqn-1}, we obtain
 $$
u_c (t,x) = \E[\mathcal{H}_{t,T,1}^x(c)], 
  \quad
   (t,x)\in[0,T]\times\R^d, \ \ c\in\C (f).
$$ 
 by conditional
 independence of branches in $\mathcal{K}^1_{t,T}(c)$ given $\F_{T_\varnothing}^{X^\varnothing}$.
 By iterating the above argument we can show similarly
 that for any $n\in \N_0$ we have
 $$
u_c (t,x) = \E[\mathcal{H}_{t,T,n}^x(c)], 
  \quad
   (t,x)\in[0,T]\times\R^d, \ \ c\in\C (f).
$$ 
  Using the fact that
  $\mathcal{H}_{t,T,n}^x(c)$ converges to $\mathcal{H}_{t,T}(
  \mathcal{X}_t^x(c))$ almost surely
  and the uniform integrability of $(\mathcal{H}_{t,T,n}^x(c))_{n\geq 0}$, we get the desired result.
\end{proof}

\begin{remark}
  Condition~$i)$ in Proposition~\ref{unf-intg}
  can be strengthened
  by assuming that the PDE system~\eqref{pde-system}
  admits a classical solution $\{u_c\}_{c\in\C (f)} \subset {\cal C}^{1,2}_b([0,T]\times \R^d)$. In this case,
  the application of It\^o's formula to
  $\big\{ u_c \big(s, X_s^\varnothing\big) \big\}_{s\geq t}$
  together with a localization argument
  (cf. \cite[Theorem 4.4.2]{Karatzas-Shreve-1991})
  allows us to recover
  the Feynman--Kac representation~\eqref{eqn-1},
  which yields uniqueness of classical solutions of \eqref{pde-system}.
\end{remark}
\subsubsection{Analytical approach}
\noindent
 We close this section with a mild uniqueness result
 for the PDE system~\eqref{pde-system} in a weighted function space,
 which has been used in the proof of Theorem~\ref{thm-1}.
\begin{proposition}
  \label{uniqueness-system}
  For any $C\geq 1$,
  the PDE system~\eqref{pde-system} has at most one mild solution in the
  weighted function space 
  \begin{equation}
\nonumber %
  \ell^\infty_w({\cal C}_b(\R^d)): =
  \left\{
  \{ v_{\alpha,j} \}_{( \alpha , j ) \in \N_0^d \times \N_{-1} } \subset {\cal C}_b(\R^d)
  \ : \
  \sup_{(\alpha , j ) \in
    \N_0^d \times \N_{-1}
    } w_\alpha \|v_{\alpha,j}\|_\infty < \infty
  \right\}
\end{equation}
  with weight function $w_\alpha := C^{-|\alpha|}$,
  $\alpha \in\N_0^d$.
\end{proposition}
\begin{proof}
  \noindent
  $i)$
  In what follows, using the notation \eqref{u-indexed} 
 we rewrite the PDE system~\eqref{pde-system} as 
\begin{equation*}
  \left\{
  \begin{aligned}
    & \hskip-0.1cm
    \left( \frac{\partial}{\partial t} + \frac{1}{2} \Delta \right) u_{\alpha,-1} + u_{\alpha,0} =0, 
    \medskip
    \\
    &
    u_{\alpha,-1}(T) = \frac{1}{\alpha!} \pt^\alpha \phi ,
    \medskip
    \\
    & \hskip-0.1cm
    \left( \frac{\partial}{\partial t} + \frac{1}{2} \Delta \right) u_{\alpha,j} + \hskip-0.1cm
    \sum_{\mathbf{0}\leq \beta\leq \alpha} u_{\alpha-\beta,0} u_{\beta,j+1} -
    \sum_{i=0}^d \sum_{\mathbf{0}\leq \beta\leq \alpha}
    \hskip-0.1cm
    \frac{( 1 + \beta_i)( 1 + \alpha_i-\beta_i )}{2}
     u_{\alpha-\beta+\ind_i,-1} u_{\beta+\ind_i,j+1} =0,
        \medskip
    \\
    &  u_{\alpha,j}(T) = \frac{1}{\alpha!} \pt^\alpha [f^{(j)}(\phi )], \quad
    j\geq 0. 
  \end{aligned}
  \right.
\end{equation*}
 Similarly, letting  
$
  U := ( u_{\alpha,j})_{(\alpha ,j)\in \N^d \times \N_{-1}}$, 
 we rewrite \eqref{pde-system-FK} as 
\begin{equation*}
  U(t) = S(T-t) U(T) + \int_t^T S(s-t) F(U(s)) ds, \quad t\in [0,T], 
\end{equation*}
 where 
\begin{align}
  \label{pde-system-nonlinearity}
  & F(U) %
= \bigg( u_{\alpha,0}, \sum_{\mathbf{0}\leq \beta\leq \alpha} u_{\alpha-\beta,0} u_{\beta,1} - \frac{1}{2} \sum_{i=0}^d \sum_{\mathbf{0}\leq \beta\leq \alpha} ( 1 + \beta_i )( 1 + \alpha_i-\beta_i ) u_{\alpha-\beta+\ind_i,-1} u_{\beta+\ind_i,1}, \ldots
  \\
  \nonumber
  &\qquad
  \quad
  \qquad %
  \hfill \sum_{\mathbf{0}\leq \beta\leq \alpha} u_{\alpha-\beta,0} u_{\beta,j+1} - \frac{1}{2} \sum_{i=0}^d \sum_{\mathbf{0}\leq \beta\leq \alpha} ( 1 + \beta_i )( 1 + \alpha_i-\beta_i ) u_{\alpha-\beta+\ind_i,-1} u_{\beta+\ind_i,j+1}, \ldots \bigg). 
\end{align} 
\noindent
 $ii)$
 Suppose %
 that for some sufficiently small $T>0$ there exist two mild solutions
 $U(t)$, $\widetilde{U}(t) \in \ell^\infty_w ({\cal C}_b(\R^d))$, $t\in[0,T]$, 
 to the PDE system~\eqref{pde-system}. 
 Without loss of generality, we assume that both $U$ and $\widetilde{U}$ lie in the unit ball $B(0,1)$ of $\ell^\infty_w ({\cal C}_b(\R^d))$, %
 i.e. 
\begin{equation*}
  \|u_{\alpha,j}(t)\|_\infty \leq C^{|\alpha|}, \quad
   \|\tilde{u}_{\alpha,j}(t)\|_\infty \leq C^{|\alpha|}, \quad \alpha\ge\mathbf{0}, \ j\geq -1, \ t\in[0,T].
\end{equation*}
 The operator norm of the Jacobian 
\begin{equation*}
  DF(V) = \left( \frac{\pt F_{\alpha,j}}{\pt u_{\beta,i}} (V) \right)_{
      \alpha , \beta\ge\mathbf{0},
 \atop 
 i,j\geq -1
   } 
\end{equation*}
of $F$ in \eqref{pde-system-nonlinearity}
as a mapping from $\ell^\infty_w({\cal C}_b(\R^d))$ to $\ell^\infty_{w'}({\cal C}_b(\R^d))$ is given by 
 \begin{equation}
   \label{eqn-25}
  \|DF(V)\|_{{\cal L} ( \ell^\infty_w({\cal C}_b(\R^d)) , \ell^\infty_{w'}({\cal C}_b(\R^d)))} =
  \sup_{\alpha\ge\mathbf{0} \atop j\ge-1}
  \left(
  w'_\alpha \sum_{\beta\ge\mathbf{0} \atop i\ge-1}
  \frac{1}{w_\beta }
  \left\| \frac{\pt F_{\alpha,j}}{\pt u_{\beta,i}}(V) \right\|_\infty
  \right),
\end{equation}
see \cite[Section 5.2]{Mey23}.
When $j=-1$ we have 
\begin{equation*}
  \frac{\pt F_{\alpha,-1}}{\pt u_{\beta,i}} =
 \mathbf{1}_{\{\alpha = \beta \}}
 \mathbf{1}_{\{i = 0 \}}, 
\end{equation*}
 hence the term inside the supremum in \eqref{eqn-25}
 is
\begin{equation*}
  w'_\alpha \sum_{\beta \geq {\bf 0}\atop i\ge-1}
  \mathbf{1}_{\{\alpha = \beta \}} 
  \mathbf{1}_{\{i = 0 \}}
  w_\beta^{-1}
  = \frac{w'_\alpha}{w_\alpha },
\end{equation*}
 while when $j\geq 0$ we have 
\begin{equation*}
  \frac{\pt F_{\alpha,j}}{\pt u_{\beta,i}} =
  \hskip-0.1cm
  \begin{cases}
  \displaystyle
  -\frac{1}{2} \sum_{k=1}^d  \beta_k (2+\alpha_k-\beta_k) u_{\alpha-\beta+2\ind_k,j+1} \ind_{\{\ind_k \leq \beta\leq \alpha+\ind_k \}}, & \!\!\! i=-1,
  \medskip
  \\
  \displaystyle
  u_{\alpha-\beta,j+1} \ind_{\{\mathbf{0}\leq \beta\leq \alpha\}}, & \!\!\! i=0,
  \medskip
    \\
  \displaystyle
  \left( u_{\alpha-\beta,0} \ind_{\{\mathbf{0}\leq \beta\leq \alpha\}} - \frac{1}{2} \sum_{k=1}^d  \beta_k (2 + \alpha_k -\beta_k ) u_{\alpha-\beta+2\ind_k ,-1} \ind_{\{ \ind_k \leq \beta\leq \alpha+\ind_k \}} \right)
  \mathbf{1}_{\{ i = j+1 \}}, 
  & \!\!\! i\geq 1, 
  \end{cases}
\end{equation*}
 thus the term inside the supremum in \eqref{eqn-25} 
 is
\begin{align*}
  & w'_\alpha \sum_{\beta\ge\mathbf{0}}
  \frac{1}{w_\beta }
  \left( \frac{1}{2} \sum_{k=1}^d  \beta_k (2 + \alpha_k-\beta_k) \|v_{\alpha-\beta+2\ind_k,j+1}\|_\infty \ind_{\{\ind_k\leq \beta\leq \alpha+\ind_k\}} + \|v_{\alpha-\beta,j+1}\|_\infty \ind_{\{\mathbf{0}\leq \beta\leq \alpha\}} \right) 
  \\
  &\quad +\ w'_\alpha \sum_{\beta\ge\mathbf{0}}
  \frac{1}{w_\beta }
  \left\| v_{\alpha-\beta,0} \ind_{\{\mathbf{0}\leq \beta\leq \alpha\}} -
  \frac{1}{2} \sum_{k=1}^d  \beta_k (2 + \alpha_k -\beta_k ) v_{\alpha-\beta+2\ind_k ,-1} \ind_{\{\ind_k \leq \beta\leq \alpha+\ind_k \}} \right\|_\infty .
  \end{align*}
It follows from \eqref{eqn-25} that for
 the family of weights 
\begin{equation*}
 w_\alpha^{(n)} := \frac{1}{C^{|\alpha|} |\alpha|^{(d+3)n}}, \quad n\geq 0, 
\end{equation*}
 which satisfies $w^{(0)}=w$ and  
 $\ell^\infty_w ({\cal C}_b(\R^d)) \subset \ell^\infty_{w^{(n)}}({\cal C}_b(\R^d))$,
 for all $n\geq 0$ and $V\in B(0,1)$ we have 
\begin{align*}
  & \|DF(V)\|_{{\cal L}(\ell^\infty_{w^{(n)}}({\cal C}_b(\R^d)) , \ell^\infty_{w^{(n+1)}}({\cal C}_b(\R^d)))}
  \\
  & \leq \sup_{\alpha\in\N_0^d} \frac{1}{C^{|\alpha|} |\alpha|^{(d+3)(n+1)}}
  \max \Big( C^{|\alpha|} |\alpha|^{(d+3)n} ,
 \\[-4pt]
& \qquad \ \! \qquad %
 \left.
\sum_{k=1}^d  \sum_{\ind_k \leq \beta\leq \alpha+\ind_k} \beta_k (2 +\alpha_k -\beta_k ) C^{|\alpha|-|\beta|+2} C^{|\beta|} |\beta|^{(d+3)n}
+ 2 \sum_{\mathbf{0}\leq \beta\leq \alpha} C^{|\alpha|-|\beta|} C^{|\beta|} |\beta|^{(d+3)n} \right)
  \\
  & =  \sup_{\alpha\in\N_0^d} \left( \frac{C^2}{|\alpha|^{(d+3)(n+1)}} \sum_{k=1}^d  \sum_{\ind_k \leq \beta\leq \alpha+\ind_k} |\beta|^{(d+3)n} \beta_k (2 + \alpha_k-\beta_k) + \frac{2}{|\alpha|^{(d+3)(n+1)}} \sum_{\mathbf{0}\leq \beta\leq \alpha} |\beta|^{(d+3)n} \right)
  \\
   & \leq \sup_{\alpha\in\N_0^d} \left( \frac{C^2}{|\alpha|^{(d+3)(n+1)}} \sum_{k=1}^d \frac{(2 + \alpha_k)^2}{4} \sum_{\ind_k \leq \beta\leq \alpha+\ind_k} |\beta|^{(d+3)n} + \frac{2}{|\alpha|^{(d+3)(n+1)}} \sum_{\mathbf{0}\leq \beta\leq \alpha} |\beta|^{(d+3)n} \right) \\
  & \leq \sup_{\alpha\in\N_0^d} \left( \frac{C^2}{|\alpha|^{(d+3)(n+1)}} \frac{(|\alpha|+2d)^2}{4} ( 1 + |\alpha| )^{(d+3)n+d} + \frac{2}{|\alpha|^{(d+3)(n+1)}} |\alpha|^{(d+3)n+d} \right)
  \\
    & \leq C_1,
\end{align*}
for some constant $C_1>0$ independent of $n\in\N_0$ and $V\in B(0,1)$.
Since the heat semigroup $( S(t) )_{t\geq 0}$
acts on $\ell^\infty_{w^{(n)}}({\cal C}_b(\R^d))$ componentwise, we
 also have
\begin{align*}
  \|S(t)\|_{{\cal L}(\ell^\infty_{w^{(n)}}({\cal C}_b(\R^d)) , \ell^\infty_{w^{(n)}}({\cal C}_b(\R^d)))}
  & = \sup_{\alpha\ge\mathbf{0}, \ \! j\ge-1} w'_\alpha \sum_{\beta\ge\mathbf{0}, \ \! i\ge-1} \left\| S(t) \right\|_{{\cal L}({\cal C}_b(\R^d) , {\cal C}_b(\R^d))} \frac{
    \mathbf{1}_{\{\alpha = \beta \}}
    \mathbf{1}_{\{ i = j \}}
  }{w_\beta}
  \\
  &= \left\| S(t) \right\|_{{\cal L}({\cal C}_b(\R^d) , {\cal C}_b(\R^d))}
  \\
    &\leq Ke^{\mu t},
\end{align*}
for some $K\geq 1$ and $\mu\in\R$ independent of $n\in\N_0$.
 Then, we have 
\begin{align*}
 & \big\|U(t) - \widetilde{U}(t)\big\|_{\ell^\infty_{w^{(n+1)}}({\cal C}_b(\R^d))} = \left\| \int_t^T S(s-t) [F(U(s)) - F(\widetilde{U}(s))] ds \right\|_{\ell^\infty_{w^{(n+1)}}({\cal C}_b(\R^d))}
  \\
    & \quad \leq Ke^{\mu (T-t)} \int_t^T \|F(U(s)) - F(\widetilde{U}(s))\|_{\ell^\infty_{w^{(n+1)}}({\cal C}_b(\R^d))} ds \\
    & \quad \leq Ke^{\mu (T-t)} \int_t^T \sup_{V\in B(0,1)} \|DF(V)\|_{{\cal L}(\ell^\infty_{w^{(n)}}({\cal C}_b(\R^d)) , \ell^\infty_{w^{(n+1)}}({\cal C}_b(\R^d)))} \|U(s) - \widetilde{U}(s)\|_{\ell^\infty_{w^{(n)}}({\cal C}_b(\R^d))} ds \\
    & \quad \leq C_1 Ke^{\mu (T-t)} \int_t^T \|U(s) - \widetilde{U}(s)\|_{\ell^\infty_{w^{(n)}}({\cal C}_b(\R^d))} ds.
\end{align*}
 Repeating the above estimates, we get
\begin{equation*}
  \begin{split}
    \big\|U(t) - \widetilde{U}(t)\big\|_{\ell^\infty_{w^{(n)}}({\cal C}_b(\R^d))} &\leq
    \big( C_1 Ke^{ (T-t)\mu} \big)^n \int_{t\leq s_1\leq \cdots \leq s_n\leq T}
    \big\|U(s_n) - \widetilde{U}(s_n)\big\|_{\ell^\infty_w ({\cal C}_b(\R^d))} ds_1 \cdots ds_n \\
    &\leq \frac{\big( C_1 Ke^{ (T-t)\mu } \big)^n}{n!}
    \sup_{t\in[0,T]}
    \big(
    (T-t)^n
    \big\|U(t) - \widetilde{U}(t)\big\|_{\ell^\infty_w ({\cal C}_b(\R^d))}
    \big),
  \end{split}
\end{equation*}
which yields the bound 
\begin{equation}\label{eqn-23}
  n! \sup_{t\in[0,T]} \big\|U(t) - \widetilde{U}(t)\big\|_{\ell^\infty_{w^{(n)}}({\cal C}_b(\R^d))} \leq
  ( C_1 K)^n
  \sup_{t\in[0,T]}
  \big(
  ( e^{\mu (T-t)} (T-t) )^n
  \big\|U(t) - \widetilde{U}(t)\big\|_{\ell^\infty_w ({\cal C}_b(\R^d))}
  \big)  
  , 
\end{equation}
 which tends to $0$ as $n$ tends to infinity,
 for $T$ small enough so that $C_1 Ke^{\mu (T-t)} (T-t)<1$. 

 \smallskip
 
 \noindent
 $iii)$
   On the other hand,
 if $U\ne\widetilde{U}$ then there exist at least one pair $(\alphastar,\jstar )$ and $t_0\in[0,T]$ such that $\|u_{\alphastar,\jstar }(t_0) - \tilde{u}_{\alphastar,\jstar }(t_0)\|_\infty > 0$, hence 
\begin{equation*}
  n! \sup_{t\in[0,T]} \big\|U(t) - \widetilde{U}(t)\big\|_{\ell^\infty_{w^{(n)}}} \geq \frac{n!}{C^{|\alphastar|} |\alphastar|^{(d+3)n}} \|u_{\alphastar,\jstar }(t_0) - \tilde{u}_{\alphastar,\jstar }(t_0)\|_\infty
\end{equation*}
where the right hand side tends to
infinity as $n$ tends to infinity, 
which leads to a contradiction to \eqref{eqn-23}.
\end{proof}
\section{Stochastic dominance: binary tree} %
\label{s5}
\noindent
 Our study of integrability of
 $\mathcal{H}_{t,T}\big(\mathcal{X}_t^x(c)\big)$ generalizes
 the stochastic dominance approach applied
 to ODEs in \cite{huangprivault2}. %
 From now on, we let $g(\alpha )$ denote either $g_{\theta , r}(\alpha )$
in \eqref{gthetar} or $g_\theta (\alpha )$ in \eqref{gtheta}. 
 We let $t\in [0,T]$ and $c\in \C (f)$, and
 consider the multiplicative weighted progeny 
\begin{equation}\label{w-progeny}
  N^{\sigma_\circ , \sigma_{\rm b}}_{t,T}(c) :=
  \prod_{{\mathbf{k}} \in \mathcal{K}_{t,T}^\circ(c)} \sigma_\circ
  (c^{\mathbf{k}}; I(c^{{\mathbf{k}}}))
  \prod_{{\mathbf{k}} \in \mathcal{K}_{t,T}^{\rm b}(c)} \sigma_{\rm b} (c^{\mathbf{k}})
\end{equation}
 of the branching process
 $\mathcal{X}_t^x(c) = \big(\mathcal{X}_{t,s}^x(c)\big)_{s\in[t,T]}$,
 where 
$$
  \sigma_{\rm b} , \sigma^{(0)}_\circ  : \N_0^d \times \N_{-1} \to (0,\infty )
  \quad \mbox{and} \quad 
  \sigma_\circ^{(i)} : \N_0^d \times \N_0 \to (0,\infty ), \quad i=1,\ldots , d,
$$
 are weight functions on the branches of $\mathcal{X}_t^x(c)$ 
 defined using the notation \eqref{u-indexed}
 for $c\in \C (f)$ and $z \in \mathcal{M}(c)$ as 
\begin{flalign*}
& \sigma_\circ (c;z) := \frac{|z_1 |}{\rho_*(T) q_c(z)} &
\end{flalign*}
\begin{subequations}
  \\[-33pt] 
  \begin{empheq}[left={\hskip-0.6cm =\hskip-0.2cm \;\empheqlbrace}]{align}
\label{s1}
      & \sigma_\circ^{(0)}(\alpha , -1)
 = \frac{1}{\rho_*(T)},
 \qquad \qquad \qquad \qquad \qquad \qquad \qquad \qquad \quad 
 \displaystyle
    c=\frac{1}{\alpha!} \partial^\alpha,
    \medskip
\\
\label{s2}
&    \sigma_\circ^{(0)}(\alpha, j)
    =
    \frac{d+1}{\rho_*(T)}
    \prod_{k=1}^d ( 1 + \alpha_k ) 
,
 \qquad \qquad \qquad \qquad \qquad  \qquad \quad \!
    \displaystyle
       c=\frac{1}{\alpha!} \partial^\alpha \circ f^{(j)}, 
        \medskip
        \smallskip
\\
\label{s3}
&
    \sigma_\circ^{(i)}(\alpha, j)
    =
    \frac{d+1    }{\rho_*(T)}
    \frac{
        (2 + \alpha_i)(3 + \alpha_i )}{12 
      }
    \prod_{k=1}^d ( 1 + \alpha_k )
    ,
 \qquad \qquad \qquad \!
 c=\frac{1}{\alpha!} \partial^\alpha \circ f^{(j)},
\end{empheq}
\end{subequations}
with $z_1=1$ in \eqref{s2} and 
$ 
     \displaystyle
     z_1=-\frac{1}{2}(1+\beta_i)(1+\alpha_i-\beta_i)
     $
     in \eqref{s3},
 and 
\begin{equation}
\nonumber %
 \sigma_{\rm b} (c) := 
  \begin{cases}
    \displaystyle
    \sigma_{\rm b} (\alpha , -1) =
    \frac{g(\alpha )}{\bar\rho(T) },
    &  \displaystyle c = \frac{1}{\alpha!} \pt^\alpha,
        \medskip
    \\
    \displaystyle
    \sigma_{\rm b} (\alpha , j) =
    \min ( \rho_*(T) , 1)
  \frac{g(\alpha )}{
  \bar\rho(T)},
    &  \displaystyle c = \frac{1}{\alpha!} \pt^\alpha \circ f^{(j)}, 
\end{cases}
\end{equation} 
 \noindent
 $\mathbf{0} \leq \beta \leq \alpha$, $i=1, \ldots, d$, 
 $(\alpha , j) \in \N_0^d \times \N_0$. 
 Under 
  \eqref{bound-code-1} or
  \eqref{bound-code-2}, we have 
  \begin{equation}
\nonumber %
  \frac{\| c (\phi ) \|_\infty}{\bar\rho(T)}
 \leq {\sigma_{\rm b} (c)}, 
 \quad
 c\in \C (f),
 \ z \in \mathcal{M}(c),
\end{equation}
 and from \eqref{functional} we find 
\begin{equation}
  \label{eqn-28}
  \left| \mathcal{H}_{t,T}\big(\mathcal{X}_t^x(c)\big) \right| \leq
  N^{\sigma_\circ , \sigma_{\rm b}}_{t, T}(c), \quad a.s. 
\end{equation}
     As a consequence, showing the integrability
     of $\mathcal{H}\left(\mathcal X_t^x(c)\right)$ requires us 
     to study the multiplicative weighted progeny at the
       right hand side
        of \eqref{eqn-28}. 
 Note that 
   since $\sigma_\circ (c;z)$
   is not uniformly bounded in $c\in \C (f)$ and $z\in \M (c)$,
   the multiplicative weighted
   progeny $N^{\sigma_\circ , \sigma_{\rm b}}_{t,T}(c)$
   cannot be controlled using the
   progeny of the branching process
 $\mathcal{X}_t^x(c)$. 

   \medskip

 In order to control the integrability of
 the branching process
 $\mathcal{X}_t^x(c) = \big(\mathcal{X}_{t,s}^x(c)\big)_{s\in[t,T]}$
 which has at most two offsprings at each generation, 
 we introduce a secondary coded binary branching chain
 $\widetilde{\cal X}_t(\tilde{c}) = \big( \widetilde{\cal X}_{t,s}(\tilde{c}) \big)_{s\in [t,T]}$
 which dominates $\mathcal{X}_t^x(c)$ 
 in a stochastic sense, and will be easier to handle
 in future arguments, see Figure~\ref{f1-11}.

\begin{figure}[H]
\begin{subfigure}{0.5\textwidth}
\tikzstyle{level 1}=[level distance=4cm, sibling distance=6cm]
\tikzstyle{level 2}=[level distance=4cm, sibling distance=5cm]
\tikzstyle{level 3}=[level distance=7.5cm, sibling distance=5.2cm]
\tikzstyle{level 4}=[level distance=7.5cm, sibling distance=5cm]
\begin{center}
\resizebox{1\textwidth}{!}{
\begin{tikzpicture}[scale=1.0,grow=right, sloped]
\node[rectangle,draw,black,text=black,thick, rounded corners=5pt,fill=gray!05]{\large \textcolor{gray!15}{$t$}} 
child {
  node[
     draw,black,text=black,thick, rounded corners=5pt,fill=gray!05,xshift=-0.5cm] {\large \textcolor{gray!15}{$T_\varnothing$}} 
            child {
  node[name=data, 
         draw, rounded corners=5pt,fill=gray!05,xshift=0.5cm] (main) 
        {\large \textcolor{gray!15}{$T_{(1)}$}} 
                child{
                  node[
                     draw,black,text=black,thick,xshift=6cm,yshift=-1cm,rounded corners=5pt,fill=gray!05]{\large \textcolor{gray!15}{\textcolor{gray!15}{$T_{(1,2)}$}}} 
                edge from parent
                node[rectangle,draw,below,fill=gray!05]{\large $\beta!^{-1} \partial^\beta \circ f'$ or $(\beta + \ind_i)!^{-1}\partial^{\beta + \ind_i}\circ f'$}
                }
                child{
                  node[
                     draw,black,text=black,thick,yshift=0.4cm,xshift=6cm,rounded corners=5pt,fill=gray!05]{\large \textcolor{gray!15}{\textcolor{gray!15}{$T_{(1,1)}$}}} 
          edge from parent
                node[rectangle,draw,below,fill=gray!05,xshift=0.5cm]{\large $(\alpha - \beta)!^{-1} \partial^{\alpha - \beta} \circ f$ or $(\alpha-\beta + \ind_i)!^{-1}\partial^{\alpha - \beta + \ind_i}$} 
                }
                edge from parent
                node[rectangle,draw,below,fill=gray!05] {\large $\alpha!^{-1}\partial^\alpha \circ f$} 
            }
                edge from parent
                node[rectangle,draw,fill=gray!05,below] {\large $\alpha!^{-1}\partial^\alpha $} 
};
\end{tikzpicture}
}
\end{center}
\caption{Original branching chain.} 
\end{subfigure}
\begin{subfigure}{0.5\textwidth}
\tikzstyle{level 1}=[level distance=6cm, sibling distance=6cm]
\tikzstyle{level 2}=[level distance=7cm, sibling distance=6.5cm]
\tikzstyle{level 3}=[level distance=8.5cm, sibling distance=5cm]
\tikzstyle{level 4}=[level distance=7.5cm, sibling distance=4cm]
\begin{center}
\resizebox{1\textwidth}{!}{
\begin{tikzpicture}[scale=1.0,grow=right, sloped]
        \node[draw,black,text=black,thick, rounded corners=5pt,fill=gray!05] {\large \textcolor{gray!15}{$t$}} 
            child {node[name=data, fill=gray!05,
         draw, rounded corners=5pt] (main) 
        {\large \textcolor{gray!15}{$T_\varnothing$}} 
                child{
                  node[
                    draw,fill=gray!05,text=black,thick,yshift=0.4cm,xshift=6cm,rounded corners=5pt]{\large \textcolor{gray!15}{$T_{(2)}$} \nodepart{two} $\nabla c$} 
                edge from parent
                node[rectangle,draw,fill=gray!05,below]{\large $\beta!^{-1}\partial^\beta \circ f'$ or $(\beta + \ind_i)!^{-1}\partial^{\beta + \ind_i}\circ f'$}
                }
                child{
                  node[
                     draw,black,text=black,thick,xshift=6cm,fill=gray!15,rounded corners=5pt]{\large \textcolor{gray!15}{$T_{(1)}$} \nodepart{two} $fgg$} %
          edge from parent
                node[rectangle,draw,fill=gray!05,below]{\large $(\alpha-\beta)!^{-1}\partial^{\alpha - \beta} \circ f$ or
                $(\alpha - \beta + \ind_i )!^{-1}\partial^{\alpha - \beta + \ind_i}$} 
                }
                edge from parent
                node[rectangle,draw,fill=gray!05,below]  {\large $\alpha!^{-1}\partial^\alpha \circ f$} 
}; 
\end{tikzpicture}
}
\end{center}
\caption{Dominating branching chain.} 
\end{subfigure}
\caption{Original {\em vs.} dominating branching chain.} 
\label{f1-11}
\end{figure}

\vskip-0.3cm

\noindent 
We denote by $\mathbb{K} := \{\varnothing\}\cup \bigcup_{n\in\N_1} \{1, 2\}^n$ the set of binary labels. 
 The ingredients of the dominating binary branching chain
 $\widetilde{\cal X}_t(\tilde{c})$ %
 are the following.
\begin{itemize}
\item Lifetimes: a family of i.i.d. stopping times
  $\big(\widetilde{\tau}_{\mathbf{k}}\big)_{{\mathbf{k}}\in\mathbb{K}}$
   with common exponential distribution with parameter $\lambda$.
  \item Coding mechanism: we construct a mechanism $\widetilde{\M}$ on the code set $\N_0^d\times \N_0$ by letting 
\begin{equation}\label{mechanism-new}
  \widetilde{\M} ((\alpha,j)) :=
  \big\{\big( (\alpha-\beta, 0), (\beta, j+1) \big), \big( (\alpha-\beta+\ind_i, 0), (\beta+\ind_i, j+1) \big)
  \big\}_{\mathbf{0}\leq \beta\leq \alpha 
    \atop
    i=1,\ldots,d }.
\end{equation}
\item Offsprings distribution:
  to every $(\alpha,j)\in \N_0^d \times \N_0$
  we associate a random variable $\widetilde{I}(\alpha,j)$ in $\widetilde{\M}((\alpha,j))$ with probability distribution
  of the form
  \begin{equation} 
      \label{offsp-dist-new}
      q_\alpha \big( (\alpha-\beta, 0), (\beta, j+1) \big) = q_\alpha^{(0)},
      \quad 
    q_\alpha \big( (\alpha-\beta+\ind_i, 0), (\beta+\ind_i, j+1) \big) = q_\alpha^{(i)}(\beta). 
\end{equation} 
\end{itemize}
Moreover,
the families $(\widetilde{\tau}_{\mathbf{k}})_{{\mathbf{k}}\in\mathbb{K}}$
and $\big(\widetilde{I} ( c^{\mathbf{k}} ) \big)_{{\mathbf{k}}\in\mathbb{K}}$
 are mutually independent.
 Given an initial code $\tilde{c} :=(\alphastar,\jstar )
\in \N_0^d \times \N_0$, we also construct a continuous-time branching
 chain, %
 as follows. 
\begin{itemize}
\item Start a branch at time $t$, indexed by the label $\varnothing$ and coded by $\tilde{c} = (\alphastar,\jstar )$. This is the zeroth generation and the common ancestor of all branches. The branch has lifetime
  $\widetilde{\tau}_\varnothing$, %
  and at the end of its lifetime 
  it splits to two first generation
  independent offsprings labelled $k = 1,2$ and coded by
   $\widetilde{I}_1( \alphastar,\jstar )$
   and $\widetilde{I}_2(\alphastar,\jstar )$ respectively. 
 \item The label of a $n$-$th$ generation 
   branch is denoted
   by ${\mathbf{k}}=(k_1, \ldots, k_{n-1}, k_n) \in \{1,2\}^n$, $n \geq 1$, and its code is given by $\tilde{c}^{{\mathbf{k}}} \in\N_0$.
   A branch ${\mathbf{k}}$
   has parent label 
   ${\mathbf{k}}^{-} := (k_1, \ldots, k_{n-2}, k_{n-1})$, 
   starts at time $\widetilde{T}_{{\mathbf{k}}^{-}}$
   and has lifetime $\widetilde{\tau}_{\mathbf{k}}$.
   At time $\widetilde{T}_{{\mathbf{k}}} := \widetilde{T}_{{\mathbf{k}}^{-}} + \widetilde{\tau}_{\mathbf{k}}$ the branch splits to two $(n+1)$-th generation
   independent offsprings
   with labels $({\mathbf{k}},1)$,
   $({\mathbf{k}},2)$ and codes
   $\widetilde{I}_1(\tilde{c}^{\mathbf{k}})$,
   $\widetilde{I}_2(\tilde{c}^{\mathbf{k}})$, respectively.
\end{itemize}
\noindent
 The above construction yields a coded branching chain denoted by
 $\widetilde{\cal X}_t(\alphastar,\jstar )$. 
 We denote by
   \begin{itemize}
   \item
     $\widetilde{\mathcal{K}}_{t,T}^\circ(\alphastar,\jstar ) $, 
$\widetilde{\mathcal{K}}_{t,T}^{\rm b}(\alphastar,\jstar )\subset \mathbb{K}$ 
 the label set of all branches %
 living at or having died before time $T$, respectively,
\item
  $\widetilde{\cal K}_{t,T} (\alphastar,\jstar )
  := \widetilde{\cal K}^\circ_{t,T} (\alphastar,\jstar )\cup \widetilde{\cal K}^b_{t,T}(\alphastar,\jstar )$ 
 the label set of all branches ever living in the time period
 $[t, T]$,
\item
  $\widetilde{X}_{t,T} = \big|\widetilde{K}_{t,T}(\alphastar,\jstar )\big|$
  the total progeny of
  $\widetilde{\cal X}_t(\alphastar,\jstar )$.
\end{itemize}
   Note that since
   $\widetilde{\cal X}_t(\alpha,j)$
   is always binary, in the distribution sense the total
   progeny
   $\widetilde{X}_{t,T}$
   does not depend on the initial code
   $(\alpha,j)$.
   The distribution of
   $\widetilde{X}_{t,T}$ is given in Lemma~\ref{progeny}. %

\medskip

 Our stochastic dominance result will be proved in
 Proposition~\ref{stoch-dom} in two steps.  
\begin{enumerate}[i)] 
\item For each $\alpha\in\N_0^d$, we merge pairs of adjacent branches coded by $\pt^\alpha$ and $\pt^\alpha \circ f$ into a single branch. %
\item We stochastically dominate the original distribution $\rho$ of the lifetimes of branches using an exponential distribution. 
\end{enumerate}  
\noindent 
 In Proposition~\ref{stoch-dom}, 
 given $g(\alpha )$ defined by either \eqref{gthetar}
 or \eqref{gtheta}, we let 
\begin{equation}
  \label{fkl323}
  \widetilde{\sigma}_{\rm b} (\alpha,j) :=
    \frac{g(\alpha )}{\bar{\rho}(T) },
    \quad
    (\alpha , j) \in \N_0^d \times \N_0, 
  \end{equation}
and
\begin{subequations}
  \begin{empheq}[left={\hskip-0.2cm \widetilde{\sigma}_\circ ( \alpha , j ;
        \tilde{z} ) :=\hskip-0.2cm \;\empheqlbrace}]{align}
\label{aaa1}
& {\sigma}_\circ^{(0)}(\alpha,j ),
 \ \ \ 
 \displaystyle
 \tilde{z} = \big( ( \alpha-\beta, 0), (\beta, j+1) \big) %
 \medskip
\\
\label{aaa2}
&
    {\sigma}_\circ^{(i)}(\alpha,j ),
\displaystyle
    \ \ \ \ \!
 \tilde{z} = \big( (\alpha-\beta+\ind_i, 0), (\beta+\ind_i, j+1) \big),
    \medskip
        \smallskip
\end{empheq}
\end{subequations}
$\mathbf{0} \leq \beta \leq \alpha$, $i=1,\ldots , d$,
$j\geq 0$, 
 be weight functions on the branches of
 $\widetilde{\cal X}_t$, 
  where $\sigma_\circ^{(i)}(\alpha, j)$, $i=0,1,\ldots , d$,
  are defined in \eqref{s2}-\eqref{s3}. 
\begin{proposition}[Stochastic dominance] 
\label{stoch-dom}
Let $g(\alpha )$ be defined by either \eqref{gthetar}
 or \eqref{gtheta}, under the respective conditions
\eqref{*-2} and
\eqref{cond-theta-2-0-2}.
Then, under Assumption~\hyperlink{H}{\bf H}$_\rho$, %
 let $( \alpha , j ) \in\N_0^d \times \N_0$, $t\in [0,T]$,
 and consider the multiplicative progenies 
\begin{equation}
\label{comp1} 
  \widetilde{N}^{\widetilde{\sigma}_\circ, \widetilde{\sigma}_{\rm b}}_{t,T} ( \alphastar,\jstar ) :=
  \prod_{{\mathbf{k}} \in \widetilde{\mathcal{K}}^\circ_{t,T}(\alphastar,\jstar )}
       \widetilde{\sigma}_\circ \big(\tilde{c}^{\mathbf{k}};
       \widetilde{I}( c^{\mathbf{k}})\big)
   \prod_{{\mathbf{k}} \in \widetilde{\mathcal{K}}^{\rm b}_{t,T}(\alphastar,\jstar )}
    \widetilde{\sigma}_{\rm b} (\tilde{c}^{\mathbf{k}}). 
\end{equation}
 Then, the multiplicative weighted progeny
 $N^{\sigma_\circ , \sigma_{\rm b}}_{t,T}(c)$
 in \eqref{w-progeny} is stochastically dominated by
 $$
   \begin{cases}
    \displaystyle
\widetilde{N}^{\widetilde{\sigma}_\circ, \widetilde{\sigma}_{\rm b}}_{t,T}( \alphastar,0)
& \mbox{ if } \displaystyle 
c = \displaystyle \frac{1}{\alphastar!} \pt^{\alphastar}, 
        \medskip
    \\
    \displaystyle
\widetilde{N}^{\widetilde{\sigma}_\circ, \widetilde{\sigma}_{\rm b}}_{t,T}( \alphastar,\jstar )    
& \mbox{ if } \displaystyle
c = \displaystyle \frac{1}{\alphastar!} \pt_x^{\alphastar} \circ f^{(\jstar )}.
 \end{cases}
$$
 In other words, for any $x\in\R$ we have 
\begin{equation*}
  \P \big( N^{\sigma_\circ , \sigma_{\rm b}}_{t,T}(c) \geq x \big) \le
  \begin{cases}
    \P \big( \widetilde{N}^{\widetilde{\sigma}_\circ, \widetilde{\sigma}_{\rm b}}_{t,T}(\alphastar,0) \geq x \big) &
    \mbox{ if }
    \displaystyle
    c = \frac{1}{\alphastar!} \pt^{\alphastar},
    \medskip
    \\
    \P \big( \widetilde{N}^{\widetilde{\sigma}_\circ, \widetilde{\sigma}_{\rm b}}_{t,T}( \alphastar,\jstar ) \geq x \big) &
    \mbox{ if }
    \displaystyle
    c = \frac{1}{\alphastar!} \pt^{\alphastar} \circ f^{(\jstar )}.
\end{cases}
\end{equation*}
\end{proposition} 
\begin{proof}
 $i)$
 On the one hand, we have 
\begin{subequations}
\begin{empheq}[left=\empheqlbrace]{align}
\label{weight-asmpt-1}
\displaystyle
& \sigma_{\rm b} (\alpha,-1) = \frac{\sigma_{\rm b} (\alpha,0)}{
  \min ( \rho_*(T) , 1)},
   \smallskip
  \bigskip
  \\[2pt]
  \label{weight-asmpt-1-1}
  &\displaystyle
   \sigma_\circ^{(0)} (\alpha,-1) = \frac{1}{\min ( \rho_*(T) , 1)},
\end{empheq}
\end{subequations}   
and on the other hand, from \eqref{fkl323} %
 and Lemma~\ref{jklfds19a}, 
 in both cases \eqref{gthetar} and \eqref{gtheta} we have 
\begin{subequations}
\begin{empheq}[left=\empheqlbrace]{align}
\label{weight-asmpt-2}
\displaystyle
   &1 \leq \widetilde{\sigma}_{\rm b} (\alpha-\beta,0)
\widetilde{\sigma}_{\rm b} (\beta,j+1)
\frac{
  {\sigma}_\circ^{(0)} (\alpha,j )
}{\widetilde{\sigma}_{\rm b} (\alpha,j)}
, 
\\[3pt]
\label{weight-asmpt-3}
\displaystyle
&1 \leq \widetilde{\sigma}_{\rm b} (\alpha-\beta+\ind_i,0)
\widetilde{\sigma}_{\rm b} (\beta+\ind_i,j+1)
\frac{
{\sigma}_\circ^{(i)} (\alpha,j )
}{\widetilde{\sigma}_{\rm b} (\alpha,j)}
  ,
\end{empheq}
\end{subequations}   
 $( \alpha , j ) \in\N_0^d \times \N_0$,
 $\mathbf{0}\leq \beta\leq \alpha$, $1\leq i\leq d$. 
 In what follows, instead of 
 \eqref{weight-asmpt-1}-\eqref{weight-asmpt-3}
 we work under the weaker conditions
\begin{subequations}
  \hspace*{-1.3cm}
  \begin{empheq}[left=\empheqlbrace]{flalign}
    \label{weight-dom-1}
  &  \displaystyle    %
    \sigma_{\rm b} (\alpha,-1) \leq \widetilde{\sigma}_{\rm b} (\alpha,0), \quad \sigma_{\rm b} (\alpha,j) \leq \widetilde{\sigma}_{\rm b} (\alpha,j), 
\\[3pt]
\label{weight-dom-2}
&
\displaystyle
    \max \left( 1 ,
    \sigma_{\rm b} (\alpha-\beta, 0) \sigma_{\rm b} (\beta,j+1)
    \frac{\sigma_\circ^{(0)} (\alpha,j )}{\sigma_{\rm b} (\alpha,j)}
    \right) 
        \leq
    \displaystyle
    \widetilde{\sigma}_{\rm b} (\alpha-\beta, 0)
    \widetilde{\sigma}_{\rm b} (\beta,j+1)
    \frac{ {\sigma}_\circ^{(0)} (\alpha,j )}{\widetilde{\sigma}_{\rm b} (\alpha,j)},
\\[3pt]
\nonumber 
&
\displaystyle
     \max \Big(1, \max
     \big( \sigma_{\rm b} \left(\alpha-\beta+\mathbf{1}_i,-1\right), \sigma_{\rm b} \left(\alpha-\beta+\mathbf{1}_i, 0\right) \sigma_\circ^{(0)}\left(\alpha-\beta+\mathbf{1}_i,-1\right) \big)
       \\
       \label{weight-dom-3}
       &
       \displaystyle 
    \left.
   ~~~
            \times \sigma_{\rm b} \left(\beta+\mathbf{1}_i, j+1\right) \frac{\sigma_\circ^{(i)}(\alpha, j)}{\sigma_{\rm b} (\alpha, j)}\right) 
           \leq
            \widetilde{\sigma}_{\rm b} \left(\alpha-\beta+\mathbf{1}_i, 0\right) \widetilde{\sigma}_{\rm b} \left(\beta+\mathbf{1}_i, j+1\right) \frac{ {\sigma}_\circ^{(i)}(\alpha, j)}{\widetilde{\sigma}_{\rm b} (\alpha, j)},  
\end{empheq}
\end{subequations}   
 $( \alpha , j ) \in\N_0^d \times \N_0$, 
 $\mathbf{0}\leq \beta\leq \alpha$, and $i=1,\ldots , d$. 

\smallskip
\noindent
$ii)$
 We introduce an intermediate binary branching chain
 $\widebar{\cal X}_t (\alphastar,\jstar )$ via a pathwise modification of the original branching chain $\mathcal{X}_t^x(c)$: since each branch coded by ${\alpha!}^{-1} \pt^\alpha$ has a unique offspring, which is coded by
  ${\alpha!}^{-1} \pt^\alpha\circ f$, we collect all such father-son pair of branches, and merge each pair as a single branch, recode it
 by ${\alpha!}^{-1} \pt^\alpha\circ f$, as shown in Figure~\ref{f1}.
 
 \vspace{-0.2cm}

 \begin{figure}[H]
\begin{subfigure}{\textwidth}
\tikzstyle{level 1}=[level distance=4cm, sibling distance=6cm]
\tikzstyle{level 2}=[level distance=4cm, sibling distance=5cm]
\tikzstyle{level 3}=[level distance=7.5cm, sibling distance=5.2cm]
\tikzstyle{level 4}=[level distance=7.5cm, sibling distance=5cm]
\begin{center}
\resizebox{0.85\textwidth}{!}{
\begin{tikzpicture}[scale=1.0,grow=right, sloped]
\node[rectangle,draw,black,text=black,thick, rounded corners=5pt,fill=gray!05]{\large \textcolor{white}{$T_\varnothing$}} 
child {
  node[
     draw,black,text=black,thick, rounded corners=5pt,fill=gray!05,xshift=-0.5cm] {\large \textcolor{white}{$T_\varnothing$}} 
            child {
  node[name=data, 
         draw, rounded corners=5pt,fill=gray!05,xshift=0.5cm] (main) 
        {\large \textcolor{white}{$T_\varnothing$}} 
                child{
                  node[
                     draw,black,text=black,thick,xshift=7.5cm,yshift=-1cm,rounded corners=5pt,fill=gray!05]{\large \textcolor{white}{$T_\varnothing$}} 
                          child{
                    edge from parent
                    node[above,trapezium,draw,fill=gray!05]{$(1,2,2)$}
                    node[rectangle,draw,fill=gray!05,below]{$\cdots $}%
                    }
                    child{
                    edge from parent
                    node[above,trapezium,draw,fill=gray!05]{$(1,2,1)$} %
                    node[rectangle,draw,fill=gray!05,below,xshift=0cm]{\large $\cdots$} %
                    }
                edge from parent
                node[above,trapezium,draw,fill=gray!05]{$(1,2)$}
                node[rectangle,draw,below,fill=gray!05]{\large $\beta!^{-1} \partial^\beta \circ f'$ or $(\beta + \ind_i)!^{-1}\partial^{\beta + \ind_i}\circ f'$}
                }
                child{
                  node[
                     draw,black,text=black,thick,yshift=0.4cm,xshift=6cm,rounded corners=5pt,fill=gray!05]{\large \textcolor{white}{$T_\varnothing$}} 
                                                   child{
                    edge from parent
                    node[above,trapezium,draw,fill=gray!05]{$(1,1,2)$} %
                    node[rectangle,draw,fill=gray!05,below]{$\cdots $}%
                    }
                    child{
                    edge from parent
                    node[above,trapezium,draw,fill=gray!05]{$(1,1,1)$} %
                    node[rectangle,draw,fill=gray!05,below,xshift=0cm]{\large $\cdots$} %
                    }
          edge from parent
                node[above,trapezium,draw,fill=gray!05]{$(1,1)$}
                node[rectangle,draw,below,fill=gray!05,xshift=0.5cm]{\large $(\alpha - \beta)!^{-1} \partial^{\alpha - \beta} \circ f$ or $(\alpha-\beta + \ind_i)!^{-1}\partial^{\alpha - \beta + \ind_i}$} 
                }
                edge from parent
                node[above,trapezium,draw,fill=gray!05] {$(1)$}
                node[rectangle,draw,below,fill=gray!05] {\large $\alpha!^{-1}\partial^\alpha \circ f$} 
            }
                edge from parent
                node[above,trapezium,draw,fill=gray!05] {$\varnothing$}
                node[rectangle,draw,fill=gray!05,below] {\large $\alpha!^{-1}\partial^\alpha $} 
};
\end{tikzpicture}
}
\end{center}
  \caption{Tree with a pair of initial branches.} 
\end{subfigure}
\end{figure} 

\begin{figure}[H]\ContinuedFloat
\begin{subfigure}{\textwidth}
\tikzstyle{level 1}=[level distance=6cm, sibling distance=6cm]
\tikzstyle{level 2}=[level distance=7cm, sibling distance=6.5cm]
\tikzstyle{level 3}=[level distance=8.5cm, sibling distance=5cm]
\tikzstyle{level 4}=[level distance=7.5cm, sibling distance=4cm]
\begin{center}
\resizebox{0.9\textwidth}{!}{
\begin{tikzpicture}[scale=1.0,grow=right, sloped]
        \node[draw,black,text=black,thick, rounded corners=5pt,fill=gray!05] {\large \textcolor{white}{$T_\varnothing$}} 
            child {node[name=data, fill=gray!05,%
         draw, rounded corners=5pt] (main) 
        {\large \textcolor{white}{$T_\varnothing$}} %
                child{
                  node[%
                    draw,fill=gray!05,text=black,thick,yshift=0.4cm,xshift=4cm,rounded corners=5pt]{\large \textcolor{white}{$T_\varnothing$} \nodepart{two} $\nabla c$} 
                    child{
                    edge from parent
                    node[above,trapezium,draw,fill=gray!05]{$(2,2)$} %
                    node[rectangle,draw,fill=gray!05,below]{$\cdots $}%
                    }
                    child{
                    edge from parent
                    node[above,trapezium,draw,fill=gray!05]{$(2,1)$} %
                    node[rectangle,draw,fill=gray!05,below]{\large $\cdots$} %
                    }
                edge from parent
                node[above,trapezium,draw,fill=gray!05]{$(2)$}
                node[rectangle,draw,fill=gray!05,below]{\large $\beta!^{-1}\partial^\beta \circ f'$ or $(\beta + \ind_i)!^{-1}\partial^{\beta + \ind_i}\circ f'$}%
                }
                child{
                  node[%
                     draw,black,text=black,thick,xshift=8cm,fill=gray!15,rounded corners=5pt]{\large \textcolor{white}{$T_\varnothing$} \nodepart{two} $fgg$} %
                          child{
                    edge from parent
                    node[above,trapezium,draw,fill=gray!05]{$(1,2)$} %
                    node[rectangle,draw,fill=gray!05,below]{$\cdots $}%
                    }
                    child{
                    edge from parent
                    node[above,trapezium,draw,fill=gray!05]{$(1,1)$} %
                    node[rectangle,draw,fill=gray!05,below,xshift=0cm]{\large $\cdots$} %
                    }
          edge from parent
                node[above,trapezium,draw,fill=gray!05]{$(1)$}
                node[rectangle,draw,fill=gray!05,below]{\large $(\alpha-\beta)!^{-1}\partial^{\alpha - \beta} \circ f$ or
                $(\alpha - \beta + \ind_i )!^{-1}\partial^{\alpha - \beta + \ind_i}$} %
                }
                edge from parent
                node[above,trapezium,draw,fill=gray!05] {$\varnothing$}
                node[rectangle,draw,fill=gray!05,below]  {\large $\alpha!^{-1}\partial^\alpha \circ f$} %
}; 
\end{tikzpicture}
}
\end{center}
  \caption{Merged tree with a single branch.} 
\end{subfigure}
  \caption{Merging of pair branches into a single branch.} 
    \label{f1}
\end{figure}

\vskip-0.3cm

\noindent  
 This modification yields the same mechanism as \eqref{mechanism-new} and the same offspring distribution as \eqref{offsp-dist-new}, and thus, a binary branching chain with the same distribution as replacing the exponential lifetime distribution of $\widetilde{\cal X}_t$ by the original lifetime distribution $\rho$.

  \smallskip

  \noindent
  $iii)$
  Next, we reweigh all branches of the modified chain $\widebar{\cal X}_t(\alphastar,\jstar )$
   coded by ${\alpha!}^{-1} \pt^\alpha \circ f^{(j)}$ using the boundary weight $\widetilde{\sigma}_{\rm b} (\alpha,j)$ and the inner weights
  ${\sigma}_\circ^{(0)} (\alpha,j )$ and ${\sigma}_\circ^{(i)} (\alpha,j )$.
  We denote the resulting multiplicative weighted 
  progeny of $\widebar{\cal X}_t(\alphastar,\jstar )$ by 
  \begin{equation*}
  \widebar{N}^{\widetilde{\sigma}_\circ, \widetilde{\sigma}_{\rm b}}_{t,T} ( \alphastar,\jstar ) :=
  \prod_{{\mathbf{k}} \in \widebar{\mathcal{K}}^\circ_{t,T}(\alphastar,\jstar )}
       \widetilde{\sigma}_\circ \big(\bar{c}^{\mathbf{k}};
       \widebar{I}(c^{\mathbf{k}})\big)
   \prod_{{\mathbf{k}} \in \widebar{\mathcal{K}}^{\rm b}_{t,T}(\alphastar,\jstar )}
    \widetilde{\sigma}_{\rm b} (\bar{c}^{\mathbf{k}})
  , 
\end{equation*}
 which can be rewritten as 
 \begin{equation}
   \label{weighted-progeny-dom-2}
  \widebar{N}^{\widetilde{\sigma}_\circ, \widetilde{\sigma}_{\rm b}}_{t,T}(\alphastar,\jstar ) =
  \widetilde{\sigma}_{\rm b} (\alphastar,\jstar ) \prod_{{\mathbf{k}}\in \mathbb{K}, \widebar{T}_{\mathbf{k}} \leq T} \Delta \widebar{N}^{\widetilde{\sigma}_\circ, \widetilde{\sigma}_{\rm b}}_{\widebar{T}_{\mathbf{k}} ,T}( \alphastar,\jstar ),
\end{equation} 
 where
 \begin{equation}
\nonumber %
  \Delta \widebar{N}^{\widetilde{\sigma}_\circ, \widetilde{\sigma}_{\rm b}}_{\widebar{T}_{\mathbf{k}} ,T}(\alphastar,\jstar ) :=
  \widetilde{\sigma}_{\rm b} (\widebar{I}_1(\bar{c}^{\mathbf{k}}))
  \widetilde{\sigma}_{\rm b} (\widebar{I}_2(\bar{c}^{\mathbf{k}}))
  \frac{\widetilde{\sigma}_\circ
    \big(\bar{c}^{\mathbf{k}};
    \widebar{I}(\bar{c}^{\mathbf{k}})\big)}{\widetilde{\sigma}_{\rm b} (\bar{c}^{\mathbf{k}})}, 
\end{equation}
 and, letting $\bar{c}^{\mathbf{k}} = (\alpha,j)$, we have 
\begin{align}
  \label{weighted-progeny-dom-delta-2-2}
  & \Delta \widebar{N}^{\widetilde{\sigma}_\circ, \widetilde{\sigma}_{\rm b}}_{\widebar{T}_{\mathbf{k}} ,T}(\alphastar,\jstar ) =
  \\
  \nonumber
  &
  \begin{cases}
    \displaystyle
    \widetilde{\sigma}_{\rm b} (\alpha-\beta, 0)
    \widetilde{\sigma}_{\rm b} (\beta,j+1)
    \frac{{\sigma}_\circ^{(0)} (\alpha,j )}{\widetilde{\sigma}_{\rm b} (\alpha,j)}, & \widebar{I}(\bar{c}^{\mathbf{k}}) = \big( (\alpha-\beta, 0), (\beta, j+1) \big),
    \medskip
    \\
\displaystyle
\widetilde{\sigma}_{\rm b} (\alpha-\beta+\ind_i, 0)
\widetilde{\sigma}_{\rm b} (\beta+\ind_i,j+1)
\frac{{\sigma}_\circ^{(i)} (\alpha,j )}{\widetilde{\sigma}_{\rm b} (\alpha,j)}, & \widebar{I}(\bar{c}^{\mathbf{k}}) = \big( (\alpha-\beta+\ind_i, 0), (\beta+\ind_i, j+1) \big).
  \end{cases}
\end{align} 
 By Assumption~\hyperlink{H}{\bf H}$_\rho$-\hyperlink{asmpt-1}{\rm ii)}, %
 $\widebar{T}_{\mathbf{k}} - \widebar{T}_{{\mathbf{k}}^-}$ stochastically dominates $\widetilde{T}_{\mathbf{k}} - \widetilde{T}_{{\mathbf{k}}^-}$
 for every 
 ${\mathbf{k}}\in \mathbb{K} \setminus \{\varnothing \}$, 
 as we have 
\begin{align*}
  \P ( \widebar{T}_{\mathbf{k}} - \widebar{T}_{{\mathbf{k}}^-} \leq r )  & = 1- \bar\rho(r)
  \\
  & \leq 1-e^{-\lambda r}
  \\
  & = \P\big( \widetilde{T}_{\mathbf{k}} - \widetilde{T}_{{\mathbf{k}}^-} \leq r \big), \quad r\geq 0. 
\end{align*}
By the independence of the sequences
$\big(\widebar{T}_{\mathbf{k}} - \widebar{T}_{{\mathbf{k}}^-}\big)_{{\mathbf{k}}\in \mathbb{K}}$ and
$\big(\widetilde{T}_{\mathbf{k}} - \widetilde{T}_{{\mathbf{k}}^-}
\big)_{{\mathbf{k}}\in \mathbb{K}}$,
we conclude from \cite[Theorem 6.B.3]{shaked2007stochastic} 
that $(\widetilde{\tau}_{\mathbf{k}} = \widetilde{T}_{\mathbf{k}} - \widetilde{T}_{{\mathbf{k}}^-})_{{\mathbf{k}}\in \mathbb{K}}$
is stochastically dominated by
 $(\overline{\tau}_{\mathbf{k}} = \widebar{T}_{\mathbf{k}} - \widebar{T}_{{\mathbf{k}}^-} )_{{\mathbf{k}}\in \mathbb{K}}$. 
 Using \eqref{weighted-progeny-dom-2} and the independence between $\big(\widebar{T}_{\mathbf{k}}\big)_{{\mathbf{k}}\in \mathbb{K}}$
and $\big(\Delta \widebar{N}^{{\sigma}_\circ, \widetilde{\sigma}_{\rm b}}_{\widebar{T}_{\mathbf{k}} ,T}(\alphastar,\jstar )\big)_{{\mathbf{k}}\in \mathbb{K}}$, 
 we derive 
\begin{align} 
  \nonumber
 \qquad \P \big( \widebar{N}^{\widetilde{\sigma}_\circ, \widetilde{\sigma}_{\rm b}}_{t ,T}(\alphastar,\jstar ) \geq x \big)  & = \E \left[ \P \left( \prod_{{\mathbf{k}}\in \mathbb{K}, \widebar{T}_{\mathbf{k}} \leq T} \Delta \widebar{N}^{\widetilde{\sigma}_\circ, \widetilde{\sigma}_{\rm b}}_{\widebar{T}_{\mathbf{k}} ,T}(\alphastar,\jstar ) \geq \frac{x}{\widetilde{\sigma}_{\rm b} (\alphastar,\jstar )}
    \Bigg|
     \big(\widebar{T}_{\mathbf{k}}\big)_{{\mathbf{k}}\in \mathbb{K}} \right) \right] \\
  \nonumber
  & \hskip-2cm
  = \E \left[
    {
      \underbrace{\P \left(
        \prod_{{\mathbf{k}}\in \mathbb{K}, t_{\mathbf{k}} \leq T
        \atop \text{branch ${\mathbf{k}}$ splits at $t_{\mathbf{k}}$}} \left(
        \widetilde{\sigma}_\circ
        \big(\tilde{c}^{\mathbf{k}}; \widetilde{I}(\tilde{c}^{\mathbf{k}})\big)
        \frac{\widetilde{\sigma}_{\rm b} (\widetilde{I}_i(\tilde{c}^{\mathbf{k}}))}{\widetilde{\sigma}_{\rm b} (\tilde{c}^{\mathbf{k}})} \right) \geq \frac{x}{\widetilde{\sigma}_{\rm b} (\alphastar,\jstar )}
        \right)}_{=: \Phi ((t_{\mathbf{k}})_{{\mathbf{k}}\in \mathbb{K}} )}
    }~_{  \hskip-0.16cm
      |{t_{\mathbf{k}} = \widebar{T}_{\mathbf{k}}, {\mathbf{k}}\in \mathbb{K}} }
    \right]
  \\
  \nonumber
  & \hskip-2cm
  = \E \big[ \Phi \big(\big(\widebar{T}_{\mathbf{k}}\big)_{{\mathbf{k}}\in \mathbb{K}}\big) \big]
  \\
  \label{jkla11}
  & \hskip-2cm
  =: \E [ \Psi ( (\overline{\tau}_{\mathbf{k}} )_{{\mathbf{k}}\in \mathbb{K}} ) ],
  \quad
 x\in\R. 
\end{align}
 Since each multiplier in the definition of $\Phi$ is nonnegative due to
 \eqref{weighted-progeny-dom-delta-2-2}, from the
 conditions~\eqref{weight-dom-2}-\eqref{weight-dom-3} with $1$ at the
 left hand side of inequalities, we see that the mapping
 \begin{align*}
   \Psi : [0,\infty)^{\mathbb{K}} & \to [0,1]
     \\
     (t_{\mathbf{k}})_{{\mathbf{k}}\in \mathbb{K}} & \mapsto  \Psi
     (
     (t_{\mathbf{k}})_{{\mathbf{k}}\in \mathbb{K}} )
     \end{align*} 
 defined from the relation
 $$
 \Psi ( (\overline{\tau}_{\mathbf{k}} )_{{\mathbf{k}}\in \mathbb{K}} )
 :=
 \Phi \big(\big(\widebar{T}_{\mathbf{k}}\big)_{{\mathbf{k}}\in \mathbb{K}}\big)
 $$ 
  is non-increasing in every component $t_{\mathbf{k}}$.
 Hence, since 
 $(\overline{\tau}_{\mathbf{k}})_{{\mathbf{k}}\in \mathbb{K}}$ stochastically dominates $(\widetilde{\tau}_{\mathbf{k}})_{{\mathbf{k}}\in \mathbb{K}}$,
 using the equivalent definition of stochastic dominance for random vectors \cite[Eq.~(6.B.4)]{shaked2007stochastic}
 and the fact that 
 the equations \eqref{weighted-progeny-dom-2}-\eqref{weighted-progeny-dom-delta-2-2}
 and therefore \eqref{jkla11}
 also hold for 
 $\widetilde{N}^{\widetilde{\sigma}_\circ, \widetilde{\sigma}_{\rm b}}_{t,T} ( \alphastar,\jstar )$
 in \eqref{weighted-progeny-dom-2}
 by replacing the exponential splitting times
 $\widetilde{T}_{\mathbf{k}}$
 with the splitting times
 $\widebar{T}_{\mathbf{k}}$
 of
 $\widebar{\cal X}_t(\alphastar,\jstar )$,  
 we obtain, for any $x\in\R$,
\begin{align*}
  \P \big( \widebar{N}^{\widetilde{\sigma}_\circ, \widetilde{\sigma}_{\rm b}}_{t ,T}(\alphastar,\jstar ) \geq x \big) & = \E [ \Psi ((\overline{\tau}_{\mathbf{k}})_{{\mathbf{k}}\in \mathbb{K}}) ]
  \\
  & \leq \E [ \Psi ( (\widetilde{\tau}_{\mathbf{k}} )_{{\mathbf{k}}\in \mathbb{K}}
    ) ]
  \\
   & = \P \big( \widetilde{N}^{\widetilde{\sigma}_\circ, \widetilde{\sigma}_{\rm b}}_{t ,T}(\alphastar,\jstar ) \geq x \big),
\end{align*} 
that is, $\widebar{N}^{\widetilde{\sigma}_\circ, \widetilde{\sigma}_{\rm b}}_{t ,T}(\alphastar,\jstar )$ is stochastically dominated by $\widetilde{N}^{\widetilde{\sigma}_\circ, \widetilde{\sigma}_{\rm b}}_{t ,T}(\alphastar,\jstar )$.

 \smallskip

 \noindent
 $iv)$ To prove that $N^{\sigma_\circ , \sigma_{\rm b}}_{t,T}(c)$ is stochastically dominated by
 $\widetilde{N}^{\widetilde{\sigma}_\circ, \widetilde{\sigma}_{\rm b}}_{t ,T}(\alphastar,\jstar )$, it remains to show that $N^{\sigma_\circ , \sigma_{\rm b}}_{t,T}(c)$ can be dominated
pathwise by the multiplicative weighted progeny
 $\widebar{N}^{\widetilde{\sigma}_\circ, \widetilde{\sigma}_{\rm b}}_{t ,T} ( \alphastar,\jstar )$ of the modified chain. For this, we rewrite \eqref{w-progeny} as 
\begin{equation}\label{discrete-chain}
  N^{\sigma_\circ , \sigma_{\rm b}}_{t,T}(c) = \sigma_{\rm b} (c) \prod_{{\mathbf{k}}\in \mathbb{K}, T_{\mathbf{k}} \leq T} \Delta N_{T_{\mathbf{k}}}(c), 
\end{equation}
 where
\begin{equation}
  \nonumber %
  \Delta N_{T_{\mathbf{k}}}(c) = \frac{\sigma_\circ
   (c^{\mathbf{k}} ; I(c^{\mathbf{k}}))}{\sigma_{\rm b} (c^{\mathbf{k}})}
  \prod_{i=2}^{|I(c^{\mathbf{k}} )|} \sigma_{\rm b} (I_i( c^{\mathbf{k}})). 
\end{equation}
 When $c^{\mathbf{k}} = {\alpha!}^{-1} \pt^\alpha$, we have
\begin{equation}\label{weighted-progeny-delta-2}
  \Delta N_{T_{\mathbf{k}}}(c) = \sigma_{\rm b} (\alpha,0) \frac{\sigma_\circ^{(0)} (\alpha,-1)}{ \sigma_{\rm b} (\alpha,-1)}, 
\end{equation}
\noindent
 whereas when $c^{\mathbf{k}} = {\alpha!}^{-1} \pt^\alpha \circ f^{(j)}$, we have
\begin{align}
  \label{weighted-progeny-delta-3}
  & \Delta N_{T_{\mathbf{k}}}(c) =
  \begin{cases}
    \displaystyle
    \sigma_{\rm b} (\alpha-\beta,0) \sigma_{\rm b} (\beta,j+1)
    \frac{\sigma_\circ^{(0)} (\alpha,j )}{\sigma_{\rm b} (\alpha,j)}, &
    \displaystyle
    I_1(c^{\mathbf{k}}) = 1, I_3(c^{\mathbf{k}}) = \frac{1}{\beta!} \pt^\beta \circ f^{(j+1)},
    \medskip
    \\
    \displaystyle
     \sigma_{\rm b} (\alpha-\beta+\ind_i,-1)
     \sigma_{\rm b} (\beta+\ind_i,j+1)
     \frac{\sigma_\circ^{(i)} (\alpha,j )}{\sigma_{\rm b} (\alpha,j)}, &
     \displaystyle I_3(c^{\mathbf{k}}) = -\frac{1}{2} (\beta_i+1)(\alpha_i-\beta_i+1).
  \end{cases}
\end{align} 
We observe from the construction of the modified chain $\widebar{\cal X}_t(\alphastar,\jstar )$
that, pathwise, all branches of $\mathcal{X}_t^x(c)$
 that split no later than $T$ are reserved for $\widebar{\cal X}_t(\alphastar,\jstar )$ in the time interval $[t, T]$, with some pairs of those branches merged. 
We partition the set of branches 
 $K := \{{\mathbf{k}}\in \mathbb{K} \ : \ T_{\mathbf{k}} \leq T\}$ as 
\begin{equation*}
 K = K_m \cup K_m(c), 
\end{equation*}
 into the subset $K_m$ of merged branches 
 and its complement $K_m(c)$.
 We consider the `merging' map $\Lambda$ from 
 $K$ into the set of branches  
\begin{equation*}
 \widebar{K} := \{{\mathbf{k}}\in \mathbb{K}\}_{\widebar{T}_{\mathbf{k}} \leq T}, 
\end{equation*}
defined by mapping pairs in $K_m$ into single elements in $\widebar{K}$,
and by being injective on $K_m(c)$. In this way, we can split the
product over $K$ in $N^{\sigma_\circ , \sigma_{\rm b}}_{t,T}(c)$ as 
\begin{equation*}
  \prod_{{\mathbf{k}}\in \mathbb{K}, T_{\mathbf{k}} \leq T} = \prod_{{\mathbf{k}}\in K_m}
  \prod_{{\mathbf{k}}\in K_m(c)}, 
\end{equation*}
and split the
product over $\widebar{K}$ in $\widebar{N}^{\widetilde{\sigma}_\circ, \widetilde{\sigma}_{\rm b}}_{t ,T}( \alphastar,\jstar ) $ as 
\begin{equation*}
  \prod_{{\mathbf{k}}\in \mathbb{K}, \widebar{T}_{\mathbf{k}} \leq T} =
  \prod_{{\mathbf{k}}\in \Lambda (K_m)}
  \prod_{{\mathbf{k}}\in \Lambda (K_m(c))}
  \prod_{{\mathbf{k}}\in \widebar{K} \setminus \Lambda(K)}.
\end{equation*}
 The conditions~\eqref{weight-dom-1}-\eqref{weight-dom-2} with $1$ at the
 left hand side of inequalities, together with \eqref{weighted-progeny-dom-delta-2-2}
 imply
\begin{equation*}
 \prod_{{\mathbf{k}}\in \widebar{K} \setminus \Lambda(K)} \Delta \widebar{N}^{\widetilde{\sigma}_\circ, \widetilde{\sigma}_{\rm b}}_{\widebar{T}_{\mathbf{k}} ,T}( \alphastar,\jstar ) \geq 1.
\end{equation*}
 Condition~\eqref{weight-dom-2} with
$$
 \sigma_{\rm b} (\alpha-\beta,0)
 \sigma_{\rm b} (\beta,j+1)
 \frac{\sigma_\circ^{(0)} (\alpha,j )}{\sigma_{\rm b} (\alpha,j)}
$$
 and 
$$
 \sigma_{\rm b} (\alpha-\beta+\ind_i,-1)
\sigma_{\rm b} (\beta+\ind_i,j+1)
\frac{\sigma_\circ^{(i)} (\alpha,j )}{\sigma_{\rm b} (\alpha,j)}
$$
at the left hand side,
 together with \eqref{weighted-progeny-dom-delta-2-2} and \eqref{weighted-progeny-delta-3}, imply
\begin{equation*}
  \prod_{{\mathbf{k}}\in K_m(c)} \Delta N_{T_{\mathbf{k}}}(c)
  \leq \prod_{{\mathbf{k}}\in \Lambda(K_m(c))} \Delta \widebar{N}^{\widetilde{\sigma}_\circ, \widetilde{\sigma}_{\rm b}}_{\widebar{T}_{\mathbf{k}} ,T} ( \alphastar,\jstar ) .
\end{equation*}
 On the other hand, condition~\eqref{weight-dom-3} with
$$
\sigma_{\rm b} (\alpha-\beta+\ind_i,0)
\sigma_\circ^{(0)} (\alpha-\beta+\ind_i,-1)
\sigma_{\rm b} (\beta+\ind_i,j+1)
\frac{\sigma_\circ^{(i)} (\alpha,j )}{\sigma_{\rm b} (\alpha,j)}
$$
at the left hand side,
 together with \eqref{weighted-progeny-dom-delta-2-2} and \eqref{weighted-progeny-delta-2}-\eqref{weighted-progeny-delta-3}, imply 
\begin{equation*}
  \prod_{{\mathbf{k}}\in K_m} \Delta N_{T_{\mathbf{k}}}(c)
  \leq
    \prod_{{\mathbf{k}}\in \Lambda (K_m)} \Delta \widebar{N}^{\widetilde{\sigma}_\circ, \widetilde{\sigma}_{\rm b}}_{\widebar{T}_{\mathbf{k}} ,T}( \alphastar,\jstar ) .
\end{equation*}
 Finally, the condition~\eqref{weight-dom-1} implies 
\begin{equation*}
 \sigma_{\rm b} (c) \leq \widetilde{\sigma}_{\rm b} (\alphastar,\jstar ), 
\end{equation*}
 hence we can now conclude from \eqref{weighted-progeny-dom-2}
and \eqref{discrete-chain} 
 that
\begin{equation*}
 N^{\sigma_\circ , \sigma_{\rm b}}_{t,T}(c) \leq \widebar{N}^{\widetilde{\sigma}_\circ, \widetilde{\sigma}_{\rm b}}_{t ,T} ( \alphastar,\jstar ). 
\end{equation*}
\end{proof}
\section{Integrability results} 
\label{s7}
\noindent 
 In this section, using algebraic and stochastic dominance
 arguments we prove Propositions~\ref{integ-final-1}
 and \ref{integ-final-2}, which ensure the integrability
 of $\mathcal{H}_{t,T}\big(\mathcal{X}_t^x(c)\big)$. 
\subsection{Algebraic dominance: Hamilton--Jacobi equation} %
\label{fjklds231}
\noindent
 In this subsection we use the binary branching chain
 $\widetilde{\cal X}_t(\tilde{c})$ 
 and its associated multiplicative progenies
 $\widetilde{N}^{\widetilde{\sigma}_\circ, \widetilde{\sigma}_{\rm b}}_{t,T} ( \alphastar,\jstar )$ defined in \eqref{comp1}.  
\begin{lemma} 
\label{fjkldf344}
 Let $(\alpha,j)\in \N_0^d\times \N_0$. 
The conditional expectations
$$
 A^{\widetilde{\sigma}_\circ ,\widetilde{\sigma}_{\rm b} }_k (\alpha,j): =
 \E \big[ \widetilde{N}^{\widetilde{\sigma}_\circ, \widetilde{\sigma}_{\rm b}}_{t ,T}(\alpha,j) \ \! \big| \ \!
 \widetilde{X}_{t,T}
   = 2k+1 \big],
 \quad k \geq 0, 
$$
 of the multiplicative weighted progeny
\begin{equation*}
  \widetilde{N}^{\widetilde{\sigma}_\circ, \widetilde{\sigma}_{\rm b}}_{t ,T}(\alphastar,\jstar ) :=
  \prod_{{\mathbf{k}} \in \widetilde{\mathcal{K}}^\circ_{t,T}(\alphastar,\jstar )}
  \widetilde{\sigma}_\circ
  \big(\tilde{c}^{\mathbf{k}}; \widetilde{I}( c^{\mathbf{k}}) \big)
  \prod_{{\mathbf{k}} \in \widetilde{\mathcal{K}}^{\rm b}_{t,T}(\alphastar,\jstar )} \widetilde{\sigma}_{\rm b} \big(\tilde{c}^{\mathbf{k}}\big),
\end{equation*} 
 do not depend on $t\in [0,T]$, and they satisfy the recursion 
 \begin{equation}
   \nonumber %
   \left\{
  \begin{aligned}
    &  A^{\widetilde{\sigma}_\circ ,\widetilde{\sigma}_{\rm b} }_0 (\alpha,j) = \widetilde{\sigma}_{\rm b} (\alpha,j),
    \\
 & A^{\widetilde{\sigma}_\circ ,\widetilde{\sigma}_{\rm b} }_{k+1} (\alpha,j) = \frac{1}{k+1}
    \sum_{
      \beta+\gamma = \alpha \atop 
      \beta, \gamma \in \N_0^d
    } \sum_{
      l_1+l_2 = k \atop 
      l_1, l_2 \geq 0
  }
    \Bigg( {\sigma}_\circ^{(0)} (\alpha,j )
    q_\alpha^{(0)}
    A^{\widetilde{\sigma}_\circ ,\widetilde{\sigma}_{\rm b} }_{l_1} (\gamma , 0)
    A^{\widetilde{\sigma}_\circ ,\widetilde{\sigma}_{\rm b} }_{l_2} (\beta , j+1)
    \\
    & 
      \qquad\qquad \qquad\qquad + \sum_{i=1}^d
    {\sigma}_\circ^{(i)} (\alpha,j ) q_\alpha^{(i)}(\beta)
    A^{\widetilde{\sigma}_\circ ,\widetilde{\sigma}_{\rm b} }_{l_1} (\gamma+\ind_i ,0)
    A^{\widetilde{\sigma}_\circ ,\widetilde{\sigma}_{\rm b} }_{l_2} (\beta+\ind_i ,j+1) \Bigg), \quad k\geq 0, 
  \end{aligned}\right.
  \end{equation}
\noindent
and the scaling relation 
\begin{equation}
      \label{jhklsd11}
    A^{a \widetilde{\sigma}_\circ , b \widetilde{\sigma}_{\rm b}}_k (\alpha,j) =
    a^k 
    b^{k+1}
    A^{\widetilde{\sigma}_\circ ,\widetilde{\sigma}_{\rm b} }_k (\alpha,j), \quad k \geq 0, 
    \end{equation} 
 for any $a,b>0$. 
\end{lemma}
\begin{proof}
  Using \eqref{recursion-total-sol}
  and the independence between lifetimes and coding mechanism, 
  we have 
\begin{align*}
 & A^{\widetilde{\sigma}_\circ ,\widetilde{\sigma}_{\rm b} }_{k+1} (\alpha,j)
   \P \big( \widetilde{X}_{t,T} = 2k+3 \big)
      \\
      & 
        =
      \E \big[ \widetilde{N}^{\widetilde{\sigma}_\circ, \widetilde{\sigma}_{\rm b}}_{t ,T}(\alpha,j)
        \mathbf{1}_{\{
          |\widetilde{\cal K}_{t,T}(\alpha,j)|= 2k+3
          \}} \big]
      \\
      & 
        =
        \E\big[ \E\big[ \widetilde{N}^{\widetilde{\sigma}_\circ, \widetilde{\sigma}_{\rm b}}_{t,T}(\alpha,j) \mathbf{1}_{\{
              |\widetilde{\cal K}_{t,T}(\alpha,j)|= 2k+3 
              \}}
            \mathbf{1}_{\{ T_\emptyset \le T \}}
            \mid T_\emptyset \big]\big]
      \\
      & 
        =
        \int_t^T \lambda e^{ - \lambda (s-t)} \E\left[
          \tilde\sigma_\circ(\alpha,j; I(\alpha,j)) \prod_{i=1}^2 \widetilde{N}^{\widetilde{\sigma}_\circ, \widetilde{\sigma}_{\rm b}}_{s,T}(\widetilde{I}_i(\alpha,j))
        \mathbf{1}_{
          \{
          \sum_{i=1}^2
         |\widetilde{\cal K}_{s,T}(\tilde I_i(\alpha,j))|= 2k+2
          \}
        } \right] ds
      \\
      &
        =
        \int_t^T \lambda e^{ - \lambda (s-t)}
        \\[-3pt] 
   &
   \sum_{0\le\beta\le\alpha}
        \bigg(
        \sigma_\circ^{(0)}(\alpha,j) q_\alpha^{(0)}
        \E\left[ \widetilde{N}^{\widetilde{\sigma}_\circ, \widetilde{\sigma}_{\rm b}}_{s,T}(\alpha-\beta, 0) \widetilde{N}^{\widetilde{\sigma}_\circ, \widetilde{\sigma}_{\rm b}}_{s,T}(\beta, j+1) \mathbf{1}_{\{
            |\widetilde{\cal K}_{s,T}(\alpha-\beta, 0)|
            +
            |\widetilde{\cal K}_{s,T}(\beta, j+1)|
            = 2k+2
            \}
        } \right]
   \\[-3pt] 
   &
   + \sum_{i=1}^d
   {\sigma}_\circ^{(i)} (\alpha,j )
   q_\alpha^{(i)}(\beta)
   \\[-3pt]
   &
   \qquad \qquad \quad   
     \E\big[ \widetilde{N}^{\widetilde{\sigma}_\circ, \widetilde{\sigma}_{\rm b}}_{s,T}(\alpha-\beta + \ind_i, 0) \widetilde{N}^{\widetilde{\sigma}_\circ, \widetilde{\sigma}_{\rm b}}_{s,T}(\beta
     + \ind_i,
     j+1) \mathbf{1}_{\{
      |\widetilde{\cal K}_{s,T}(\alpha-\beta+ \ind_i, 0)|
       +
       |\widetilde{\cal K}_{s,T}(\beta+ \ind_i, j+1)|
       = 2k+2
            \}
  } \big]
 \Bigg) ds
   \\
  & =
  \sum_{\mathbf{0}\leq \beta\leq \alpha} \int_t^T
  \lambda e^{ - \lambda (s-t)}
   \sum_{
      l_1+l_2 = k \atop 
      l_1, l_2 \geq 0
 }
   \P \big( \widetilde{X}_{s,T} = 2l_1+1 \big)
   \P \big( \widetilde{X}_{s,T} = 2l_2+1 \big)
   \\
   &
   \Bigg(
   {\sigma}_\circ^{(0)} (\alpha,j )
   q_\alpha^{(0)}
   A^{\widetilde{\sigma}_\circ ,\widetilde{\sigma}_{\rm b} }_{l_1} (\alpha - \beta , 0)
   A^{\widetilde{\sigma}_\circ ,\widetilde{\sigma}_{\rm b} }_{l_2} (\beta ,j+1)
   \\[-7pt] 
   &
   \quad \qquad \qquad \qquad \qquad \qquad \qquad
   \left. + \sum_{i=1}^d
   {\sigma}_\circ^{(i)} (\alpha,j )
   q_\alpha^{(i)}(\beta)
   A^{\widetilde{\sigma}_\circ ,\widetilde{\sigma}_{\rm b} }_{l_1} (\alpha - \beta + \ind_i , 0)
   A^{\widetilde{\sigma}_\circ ,\widetilde{\sigma}_{\rm b} }_{l_2} (\beta + \ind_i ,j+1) \right) ds
     \\
  & =
  \sum_{\mathbf{0}\leq \beta\leq \alpha} \int_t^T
  \lambda e^{ - \lambda (s-t)}
   \sum_{
      l_1+l_2 = k \atop 
      l_1, l_2 \geq 0
 }
   e^{ - 2\lambda( T - s)}
   \big( 1- e^{ - \lambda(T - s)} \big)^{l_1+l_2}
   \Bigg(
   {\sigma}_\circ^{(0)} (\alpha,j )
   q_\alpha^{(0)}
   A^{\widetilde{\sigma}_\circ ,\widetilde{\sigma}_{\rm b} }_{l_1} (\alpha - \beta , 0)
   A^{\widetilde{\sigma}_\circ ,\widetilde{\sigma}_{\rm b} }_{l_2} (\beta ,j+1)
   \\[-7pt] 
   &
   \quad \qquad \qquad \qquad \qquad \qquad \qquad
   \left. + \sum_{i=1}^d
   {\sigma}_\circ^{(i)} (\alpha,j )
   q_\alpha^{(i)}(\beta)
   A^{\widetilde{\sigma}_\circ ,\widetilde{\sigma}_{\rm b} }_{l_1} (\alpha - \beta + \ind_i , 0)
   A^{\widetilde{\sigma}_\circ ,\widetilde{\sigma}_{\rm b} }_{l_2} (\beta + \ind_i ,j+1) \right) ds     \\
  & =
  \lambda e^{ - \lambda (T-t)}
 \int_t^T
    \P \big( \widetilde{\cal X}_{s,T} = 2k+1 \big) ds 
   \sum_{\mathbf{0}\leq \beta\leq \alpha}
   \sum_{
      l_1+l_2 = k \atop 
      l_1, l_2 \geq 0
 } 
   \Bigg(
   {\sigma}_\circ^{(0)} (\alpha,j )
   q_\alpha^{(0)}
   A^{\widetilde{\sigma}_\circ ,\widetilde{\sigma}_{\rm b} }_{l_1} (\alpha - \beta , 0)
   A^{\widetilde{\sigma}_\circ ,\widetilde{\sigma}_{\rm b} }_{l_2} (\beta ,j+1)
   \\[-2pt] 
   &
   \ \quad \qquad \qquad \qquad \qquad \qquad \qquad
   \left. + \sum_{i=1}^d
   {\sigma}_\circ^{(i)} (\alpha,j )
   q_\alpha^{(i)}(\beta)
   A^{\widetilde{\sigma}_\circ ,\widetilde{\sigma}_{\rm b} }_{l_1} (\alpha - \beta + \ind_i , 0)
   A^{\widetilde{\sigma}_\circ ,\widetilde{\sigma}_{\rm b} }_{l_2} (\beta + \ind_i ,j+1) \right)
        \\
  & =
 \frac{ \P \big( \widetilde{\cal X}_{t,T} = 2k+3 \big)}{k+1} 
   \sum_{\mathbf{0}\leq \beta\leq \alpha}
   \sum_{
      l_1+l_2 = k \atop 
      l_1, l_2 \geq 0
 } 
   \Bigg(
   {\sigma}_\circ^{(0)} (\alpha,j )
   q_\alpha^{(0)}
   A^{\widetilde{\sigma}_\circ ,\widetilde{\sigma}_{\rm b} }_{l_1} (\alpha - \beta , 0)
   A^{\widetilde{\sigma}_\circ ,\widetilde{\sigma}_{\rm b} }_{l_2} (\beta ,j+1)
   \\[-7pt] 
   &
   \ \quad \qquad \qquad \qquad \qquad \qquad \qquad
   \left. + \sum_{i=1}^d
   {\sigma}_\circ^{(i)} (\alpha,j )
   q_\alpha^{(i)}(\beta)
   A^{\widetilde{\sigma}_\circ ,\widetilde{\sigma}_{\rm b} }_{l_1} (\alpha - \beta + \ind_i , 0)
   A^{\widetilde{\sigma}_\circ ,\widetilde{\sigma}_{\rm b} }_{l_2} (\beta + \ind_i ,j+1) \right), 
\end{align*}
 where we used the equalities  
\begin{align*}
 \lambda \int_t^T 
       \P \big( \widetilde{\cal X}_{s,T} = 2k+1 \big) ds
      &= \lambda \int_t^Te^{ - \lambda (T-s)}
      \big(1-e^{- \lambda (T-s)}\big)^k ds \\
       &= \lambda \sum_{j=0}^k (-1)^j \binom{k}{j} \int_t^T e^{-\lambda (j+1)(T-s) } ds \\
      &= \sum_{j=0}^k \frac{(-1)^j}{j+1} \binom{k}{j}
      ( 1 - e^{-\lambda (j+1)(T-t) } )
      \\
      &= \frac{1}{k+1} \big( 1- e^{-\lambda (T - t)} \big)^{k+1}
       \\
       &= \frac{e^{ \lambda (T - t)} }{k+1} 
      \P \big( \widetilde{\cal X}_{t,T} = 2k+3 \big), \quad k \geq 0. 
\end{align*}
 Finally, since the branching chain
 $\widetilde{\cal X}_t$
 is binary, we note that 
 for any $a,b>0$ we have
 \begin{align*}
  \widetilde{N}^{a \widetilde{\sigma}_\circ, b \widetilde{\sigma}_{\rm b}}_{t ,T}(\alphastar,\jstar ) & =
  \prod_{{\mathbf{k}} \in \widetilde{\mathcal{K}}^\circ_{t,T}(\alphastar,\jstar )}
  a \widetilde{\sigma}_\circ
  \big(\tilde{c}^{\mathbf{k}}; \widetilde{I}( c^{\mathbf{k}}) \big)
  \prod_{{\mathbf{k}} \in \widetilde{\mathcal{K}}^{\rm b}_{t,T}(\alphastar,\jstar )}
  b
  \widetilde{\sigma}_{\rm b} \big(\tilde{c}^{\mathbf{k}}\big)
  \\
  & =
 a^{(|\widetilde{X}_{t,T} (\alpha , j)|-1)/2} 
  b^{(|\widetilde{X}_{t,T}(\alpha , j)|+1)/2} 
  \prod_{{\mathbf{k}} \in \widetilde{\mathcal{K}}^\circ_{t,T}(\alphastar,\jstar )}
  \widetilde{\sigma}_\circ
  \big(\tilde{c}^{\mathbf{k}}; \widetilde{I}( c^{\mathbf{k}}) \big)
  \prod_{{\mathbf{k}} \in \widetilde{\mathcal{K}}^{\rm b}_{t,T}(\alphastar,\jstar )}
  \widetilde{\sigma}_{\rm b} \big(\tilde{c}^{\mathbf{k}}\big),
\end{align*} 
 which implies \eqref{jhklsd11}.
\end{proof}

\noindent
In Proposition~\ref{gfdkj4} we derive
 a contact Hamilton--Jacobi equation
 for the bivariate generating function 
 of the multiplicative weighted progeny
 $\widetilde{N}^{\widetilde{\sigma}_\circ, \widetilde{\sigma}_{\rm b}}_{t ,T}(\alphastar,\jstar )$, $(\alpha , j ) \in\N_0^d \times \N_0$. 
 In what follows, since 
 $\widetilde{\sigma}_{\rm b} $
 and 
 $\widetilde{\sigma}_\circ$
 defined in \eqref{fkl323} and \eqref{aaa1}-\eqref{aaa2}
 do not depend on $j$,
 we let 
\begin{align}
\nonumber %
A_k ( \alpha ) := A_k^{\rho_*(T) \widetilde{\sigma}_\circ ,
  \bar{\rho}(T) \widetilde{\sigma}_{\rm b} } ( \alpha,j )
 = \E \big[ \widetilde{N}^{\rho_*(T) \widetilde{\sigma}_\circ,
    \bar{\rho}(T) \widetilde{\sigma}_{\rm b}}_{t ,T}(\alpha,j) \ \! \big| \ \! \widetilde{X}_{t,T} = 2k+1 \big],
\end{align} 
 $(\alpha , k ) \in\N_0^d \times \N_0$. 
\begin{proposition}[Contact Hamilton--Jacobi equation]
\label{gfdkj4}
 The bivariate generating function 
\begin{equation}
\nonumber %
  G(s,x) := 
   \sum_{(\alpha , k ) \in\N_0^d \times \N_0} 
   s^k x^\alpha
   A_k(\alpha ),
 \quad (x,s) \in \real^d \times \real, 
\end{equation}
 satisfies the (backward in time) contact Hamilton--Jacobi equation
\begin{subequations}
\begin{empheq}[left=\empheqlbrace]{align}
\label{contact-HJ}
   &  \displaystyle
    \frac{\pt G}{\pt s} (s,x) = G(s, x)^2 + \frac{1}{2} \left| \nabla G (s, x) \right|^2, & %
    s<0, 
    \medskip
   \\
\label{contact-HJ-0}
   & \displaystyle
    G(0,x) = G_0(x), \quad x\in \real^d, 
\end{empheq}
\end{subequations}
 where 
 $$
 G_0 ( x ) := \sum_{\alpha \in\N_0^d} g(\alpha) x^\alpha, \qquad x\in \real^d,
 $$ 
 is the generating function of $(g(\alpha))_{\alpha\in\N_0^d}$. 
\end{proposition}
\begin{Proof} 
 Relation~\eqref{contact-HJ-0} follows from 
 \eqref{fkl323}, which reads 
 $$
 A_0 (\alpha) =
 \bar{\rho} (T) \widetilde{\sigma}_{\rm b} (\alpha , j)
 = 
 g(\alpha),
 \quad (\alpha , j)\in\N_0^d\times \N_0.
$$
 On the other hand, combining Lemma~\ref{fjkldf344}
 with the facts that
 $$
 \rho_*(T) \sigma_\circ^{(0)}(\alpha,j) q_\alpha^{(0)} = 1
 \quad
 \mbox{and}
 \quad 
    \rho_*(T)
   \sigma_\circ^{(i)}(\alpha, j)
   q_\alpha^{(i)} (\beta )
   =
    \frac{
(1+\beta_i)(1+\alpha_i-\beta_i)
}{2}
    $$
 which are due to \eqref{offsp-dist}-\eqref{offsp-dist-2} 
 and \eqref{s1}-\eqref{s3}, 
we have 
$$ A_{k+1} (\alpha) = \frac{1}{k+1}
    \sum_{
      \beta+\gamma = \alpha
      \atop
       \beta, \gamma \in \N_0^d}
    \sum_{
          l_1+l_2 = k \atop l_1, l_2 \geq 0
 }
    \Bigg( A_{l_1} (\gamma) A_{l_2} (\beta) \left. + \frac{1}{2} \sum_{i=1}^d (1 + \gamma_i ) (1 + \beta_i )
    A_{l_1} (\gamma + \ind_i)
    A_{l_2} (\beta + \ind_i) \right), 
    $$
     which implies 
  \begin{align*}
  & \frac{\pt G}{\pt s} (s,x) = \sum_{
    ( \alpha , k ) \in\N_0^d \times \N_0} (k+1)
  A_{k+1} (\alpha ) s^k x^\alpha \\
  & = \sum_{(\alpha , k ) \in\N_0^d\times \N_0}
  s^k x^\alpha
  \sum_{
      \beta+\gamma = \alpha \atop 
      \beta, \gamma \in \N_0^d
  }
  \sum_{
      l_1+l_2 = k \atop 
      l_1, l_2 \geq 0
  }
  \Bigg(
  A_{l_1} (\gamma )
  A_{l_2} (\beta )
  \left. + \frac{1}{2} \sum_{i=1}^d (1 + \gamma_i ) (1 + \beta_i )
  A_{l_1} (\gamma + \ind_i )
  A_{l_2} (\beta + \ind_i ) \right)
    \\
    & = \sum_{( \gamma , l_1 ) \N_0^d \times \N_0}
    A_{l_1} (\gamma ) s^{l_1} x^\gamma \sum_{( \beta , l_2 ) \in\N_0^d \times \N_0}
    A_{l_2} (\beta ) s^{l_2} x^\beta \\
    & \qquad + \frac{1}{2} \sum_{i=1}^d \left( \sum_{
      ( \gamma , l_1 ) \in\N_0^d \times \N_0} (1 + \gamma_i )
    A_{l_1} (\gamma + \ind_i ) s^{l_1} x^\gamma \sum_{
      ( \beta , l_2 ) \in\N_0^d \times \N_0} (1 + \beta_i )
    A_{l_2} (\beta + \ind_i ) s^{l_2} x^\beta \right) \\
    & = G(s, x)^2 + \frac{1}{2} \sum_{i=1}^d
    \left( \frac{\partial G}{\partial x_i} (s, x) \right)^2, 
  \end{align*}
 and yields \eqref{contact-HJ}.  
\end{Proof}
\noindent
As the contact Hamilton--Jacobi
equation
\eqref{contact-HJ}-\eqref{contact-HJ-0} may not have a closed form
 solution, we will dominate 
 $A_k (\alpha )$
 using a simpler equation in Proposition~\ref{fjkldf324}. 
\begin{proposition}[Algebraic dominance]  
  \label{fjkldf324}
 Given %
 $(g(\alpha))_{\alpha\in\N_0^d}$ a positive sequence, 
 let 
 $\big(
 \widehat{A}_k(\alpha) \big)_{( \alpha , k ) \in\N_0^d \times \N_0}$ be
 defined by the recursion 
\begin{subequations}
\begin{empheq}[left=\empheqlbrace]{align}
\label{eqn-13-2}
   &  \displaystyle
  \widehat{A}_0(\alpha) = g(\alpha),
    \medskip
   \\
\label{eqn-13-2-0}
   & \displaystyle
  \widehat{A}_{k+1}(\alpha) = \frac{1}{k+1}
    \sum_{
      \beta+\gamma = \alpha \atop 
      \beta, \gamma \in \N_0^d
    }
    \sum_{
      l_1+l_2 = k \atop 
      l_1, l_2 \geq 0
    }
    \sum_{i=1}^d (1 + \gamma_i ) (1 + \beta_i ) \widehat{A}_{l_1}(\gamma+\ind_i)
    \widehat{A}_{l_2}(\beta+\ind_i). 
\end{empheq}
\end{subequations}
 Then, %
\begin{enumerate}[i)]
\item
 the bivariate generating function 
\begin{equation*}
  \widehat{G}(s,x)
  := \sum_{( \alpha , k ) \in \N_0^d \times \N_0} \widehat{A}_k(\alpha) x^\alpha s^k,
   \quad (s,x) \in \real\times \real^d, 
\end{equation*}
 satisfies the (backward in time) Hamilton--Jacobi equation 
\begin{subequations}
\begin{empheq}[left=\empheqlbrace]{align}
   \label{HJ}
   &  \displaystyle
    \frac{\pt \widehat{G}}{\pt s} (s,x) =
    \big| \nabla \widehat{G} (s, x) \big|^2, & %
    \ s<0, 
    \medskip
   \\
   \label{HJ-0}
   & \displaystyle
\widehat{G}(0,x) = G_0(x), x\in\R^d; 
\end{empheq}
\end{subequations}
\item
 we have the domination 
\begin{equation}
  \label{jfkld111a}
  A_k %
   (\alpha )  
  =
  \E \big[ \widetilde{N}^{\rho_*(T)\widetilde{\sigma}_\circ, \bar\rho (T)\widetilde{\sigma}_{\rm b}}_{t ,T}(\alpha,j) \ \! \big| \ \! \widetilde{X}_{t,T} = 2k+1 \big]
  \leq 
   \widehat{A}_k (\alpha), 
\end{equation}
 $(\alpha, k)\in \N_0^d\times \N_0$, provided that 
\begin{equation}
  \label{dom-asmp}
    (1 + \alpha_i ) \widehat{A}_k(\alpha+\ind_i) \geq \sqrt{\frac{2}{d}} \widehat{A}_k(\alpha ),
     \quad ( \alpha , k ) \in\N_0^d \times \N_0, \ \ i=1,\ldots , d. 
\end{equation}
\end{enumerate}
\end{proposition} 
\begin{Proof} 
  $i)$
  Relation~\eqref{HJ-0} follows from \eqref{eqn-13-2-0}. 
  Next, as in the proof of Proposition~\ref{gfdkj4}, we have 
\begin{align*}
   \frac{\pt G}{\pt s} (s,x) & = \sum_{
    ( \alpha , k ) \in\N_0^d \times \N_0} (k+1)
  \widehat{A}_{k+1} (\alpha) s^k x^\alpha \\
  & = \frac{1}{2}
  \sum_{(\alpha , k ) \in\N_0^d\times \N_0}
  s^k x^\alpha
  \sum_{
      \beta+\gamma = \alpha \atop 
      \beta, \gamma \in \N_0^d
  }
  \sum_{
      l_1+l_2 = k \atop 
      l_1, l_2 \geq 0
  }
 \sum_{i=1}^d (1 + \gamma_i ) (1 + \beta_i )
  \widehat{A}_{l_1 } (\gamma + \ind_i)
  \widehat{A}_{l_2} (\beta + \ind_i ) 
    \\
    & = \frac{1}{2} \sum_{i=1}^d
    \sum_{
      ( \gamma , l_1 ) \in\N_0^d \times \N_0} (1 + \gamma_i )
    \widehat{A}_{l_1} (\gamma + \ind_i ) s^{l_1} x^\gamma \sum_{
      ( \beta , l_2 ) \in\N_0^d \times \N_0} (1 + \beta_i )
    \widehat{A}_{l_2} (\beta + \ind_i ) s^{l_2} x^\beta
    \\
    & = \frac{1}{2} \sum_{i=1}^d \left( \frac{\partial G}{\partial x_i}
    (s, x) \right)^2, 
\end{align*}
 which yields \eqref{HJ}.  

 \smallskip

 \noindent
 $ii)$
   We use induction on $k\geq 0$. When $k=0$ we clearly have $\widehat{A}_0(\alpha) \geq
   A_0 (\alpha)$ for all $\alpha\in\N_0^d$. Next, suppose that
   $\widehat{A}_{k'}(\alpha) \geq A_{k'} (\alpha)$ for all $\alpha\in\N_0^d$ and $0\leq k'\leq k$. Then, using \eqref{eqn-13-2}-\eqref{eqn-13-2-0},
    \eqref{dom-asmp} and the induction
 hypothesis, we have
\begin{align*}
   & \widehat{A}_{k+1}(\alpha) \geq \frac{1}{k+1}
    \sum_{
      \beta+\gamma = \alpha \atop 
      \beta, \gamma \in \N_0^d
   } \sum_{
      l_1+l_2 = k \atop 
      l_1, l_2 \geq 0
    }
    \left( \widehat{A}_{l_1}(\gamma ) \widehat{A}_{l_2} (\beta ) 
        + \frac{1}{2} \sum_{i=1}^d (1 + \gamma_i ) (1 + \beta_i ) \widehat{A}_{l_1}(\gamma+\ind_i) \widehat{A}_{l_2}(\beta+\ind_i) \right) \\
    & \geq \frac{1}{k+1}
    \sum_{
      \beta+\gamma = \alpha \atop 
      \beta, \gamma \in \N_0^d
    }
    \sum_{
      l_1+l_2 = k \atop 
      l_1, l_2 \geq 0
    }
    \left( A_{l_1} (\gamma)
    A_{l_2} (\beta) 
    + \frac{1}{2} \sum_{i=1}^d (1 + \gamma_i ) (1 + \beta_i )
    A_{l_1} (\gamma + \ind_i)
    A_{l_2} (\beta + \ind_i) \right)
\\
&= A_{k+1} (\alpha),
 \quad k \geq 0.
\end{align*}
\end{Proof}
\noindent
 The next result is a consequence of
 Proposition~\ref{fjkldf324}. %
\begin{corollary} 
\label{fjkldf324-2}
Let $\big(\widehat{A}_k(\alpha) \big)_{( \alpha , k ) \in\N_0^d \times \N_0}$ be defined by the recursion \eqref{eqn-13-2}-\eqref{eqn-13-2-0}. 
Under Condition~\eqref{dom-asmp}, 
 we have 
\begin{equation*}
 \E \big[ \widetilde{N}^{\widetilde{\sigma}_\circ , \widetilde{\sigma}_{\rm b} }_{t ,T} ( \alpha,j ) \big]
    \leq \frac{
    e^{-\lambda(T-t)} }{\bar\rho(T) }
    \widehat{G}_\alpha
    \left(
    \frac{1- e^{-\lambda(T-t)} }{\rho_*(T)\bar\rho(T) }
    \right), \quad
    (\alpha , j) \in \N_0^d \times \N_0, 
\end{equation*}
 where 
\begin{equation*}
 \widehat{G}_\alpha (s) := \sum_{k=0}^\infty \widehat{A}_k(\alpha) s^k
\end{equation*}
 is the univariate generating function of
 the sequence
 $\big(\widehat{A}_k(\alpha))_{k\geq 0 }$, $\alpha\in\N_0^d$.  
\end{corollary}
\begin{Proof}
  Hence, from \eqref{jhklsd11} and
  \eqref{jfkld111a}, we obtain 
\begin{align*}
\E\big[
  \widetilde{N}^{\widetilde{\sigma}_\circ, \widetilde{\sigma}_{\rm b}}_{t ,T}(\alpha, j)
  \big]
& =
\sum_{k\geq 0}
\E \big[
    \widetilde{N}^{\widetilde{\sigma}_\circ, \widetilde{\sigma}_{\rm b}}_{t ,T}(\alpha, j) \ \!
    \big| \ \! 
    \widetilde{X}_{t,T}
    = 2k+1
    \big]
\P
\big(
    \widetilde{X}_{t,T} = 2k+1
    \big)
 \\
    & = 
    e^{-\lambda(T-t)}
    \sum_{k=0}^\infty \big( 1- e^{-\lambda(T-t)} \big)^k
    A_k^{ \widetilde{\sigma}_\circ , \widetilde{\sigma}_{\rm b} } (\alpha , j)
 \\
    & = 
    \frac{e^{-\lambda (T-t)}}{\bar\rho(T)}
    \sum_{k=0}^\infty
    \left( \frac{1- e^{-\lambda(T-t)})^k}{\rho_*(T)\bar\rho(T) } \right)^k
    A_k^{ \rho_*(T) \widetilde{\sigma}_\circ , \bar{\rho}(T) \widetilde{\sigma}_{\rm b} } (\alpha , j)
    \\
    & \leq  
    \frac{e^{-\lambda (T-t)}}{\bar\rho(T)}
      \sum_{k=0}^\infty
    \left( \frac{1- e^{-\lambda(T-t)})^k}{\rho_*(T)\bar\rho(T) } \right)^k
    \widehat{A}_k (\alpha ),
    \\
    & \leq \frac{e^{-\lambda(T-t)} }{\bar\rho(T) }
    \widehat{G}_\alpha
    \left( \frac{1- e^{-\lambda(T-t)}}{\rho_*(T)\bar\rho(T) } \right), 
\end{align*}
\end{Proof}
\noindent 
In Lemma~\ref{fjkls111} we solve the
recursion \eqref{eqn-13-2}-\eqref{eqn-13-2-0} using
the Hamilton--Jacobi equation~\eqref{HJ}. 
\begin{lemma}
    \label{fjkls111}
     Assume that the generating function
     $ G_0$ of $(g(\alpha))_{\alpha\in\N_0^d}$
 is analytic in a neighborhood of $\mathbf{0} \in \R^d$, 
 and can be written as 
\begin{equation*}
  G_0(x) = H (x_1+ \cdots +x_d),
   \quad x\in \R^d, 
\end{equation*}
 where $H$ is given by a series expansion of the form 
\begin{equation*}
 H(z) = \sum_{m=0}^\infty h(m) z^m,
 \quad z\in \R,
\end{equation*}
 and satisfies $H'(z) \ne 0$ for all $z\in \R$. 
 Then,
 \begin{enumerate}[i)]
 \item
   the Hamilton--Jacobi equation~\eqref{HJ}-\eqref{HJ-0} 
   admits the solution
   \begin{equation}\label{bi-gf}
      \widehat{G}(s,x) =  - s d \left| H'(\langle y(s,x) \rangle) \right|^2 + H(\langle y(s,x) \rangle), 
\end{equation}
\item the recursion 
   \eqref{eqn-13-2}-\eqref{eqn-13-2-0} admits the solution 
\begin{align}
\label{A-hat-0}
  \widehat{A}_k(\alpha )
 & = \frac{(2d)^k}{\alpha!(k+1)!}
  \frac{\partial^{k+|\alpha|-1}}{\partial z^{k+|\alpha|-1}}\left[ H'(z) \right]^{k+1}_{\mid z = 0} 
  \\
  \nonumber
  & =
  (2d)^k\frac{(k+|\alpha|-1)!}{\alpha!(k+1)!} \sum_{
      m_1+\cdots m_{k+1} = k+|\alpha|-1 \atop 
      m_1, \ldots , m_{k+1} \geq 0
 } \prod_{j=1}^{k+1}
  \big( (1 + m_j ) h(1 + m_j ) \big),
\end{align} 
 $(\alpha , k ) \in \N_0^d\times \N_0$, 
where the above summation is over integer compositions.
 \end{enumerate}
\end{lemma}
\begin{Proof}
 $i)$ By the Hopf--Lax formula \cite[$\S$3.3]{Eva10},
 the Hamilton--Jacobi equation~\eqref{HJ} admits a unique weak
 solution implicitly given by 
\begin{equation*}
  \widehat{G}(s,x) = \min_{y\in\R^d} \left\{ -s L\left( \frac{y-x}{s} \right) + G_0(y) \right\} = \min_{y\in\R^d} \left\{ - \frac{|y-x|^2}{4s} + H(\langle y \rangle) \right\}, \quad s< 0. 
\end{equation*}
For $x$ in a neighborhood $U$ of $\mathbf{0}$ in $\real^d$,
the minimizer $y=y(s,x)$
of the above problem, 
 if the minimum is achievable, should solve 
 \begin{equation}
   \nonumber %
  \frac{y_i-x_i}{H'(\langle y \rangle)} = 2 s, \quad i=1,\ldots,d, 
\end{equation}
 i.e., after summation over $i=1,\ldots,d$, 
 \begin{equation}
   \nonumber %
  \frac{\langle y \rangle-\langle x \rangle}{H'(\langle y \rangle)} = 2  s d, 
\end{equation}
 from which we infer 
\begin{equation}\label{eqn-20-2}
  y_i - x_i = \frac{\langle y \rangle - \langle x \rangle}{d}, \quad i=1,\ldots,d.
\end{equation}
 As $H'\ne 0$, it follows from the Lagrange--B\"urmann inversion
 formula 
 that $\langle y \rangle = \langle y(s,x) \rangle$ is analytical
 in $s$ in an $x$-dependent interval $I_x$
 containing $0$, and admits the series expansion 
\begin{equation*}
  \langle y(s,x) \rangle = \langle x \rangle
  + \sum_{k=1}^{\infty} (2sd)^k y_k(x) ,
\end{equation*}
where
\begin{equation*}
  y_k(x) := \frac{1}{k!} \lim _{z \rightarrow \langle x \rangle} \frac{\partial^{k-1}}{\partial z^{k-1}} ( H'(z) )^k.
\end{equation*}
This yields the following candidate solution to \eqref{HJ}-\eqref{HJ-0}:
\begin{equation}
  \nonumber %
  \begin{split}
    \widehat{G}(s,x) &:= - \frac{|y(s,x)-x|^2}{4s} + H(\langle y(s,x) \rangle)
    \\
    & = - s d \left| H'(\langle y(s,x) \rangle) \right|^2 + H(\langle y(s,x) \rangle)
    \\
    &= -\frac{\langle y(s,x) \rangle-\langle x \rangle}{2}
    H'(\langle y(s,x) \rangle ) + H(\langle y(s,x) \rangle).
  \end{split}
\end{equation}
The domain of definition
$$
\{ (s,x) \in \R \times \R^d \ : \ s \in I_x, \ x\in U \}
$$
of $\widehat{G}$ is the same as that of $\langle y(\cdot ,\cdot ) \rangle$.
Then, by \eqref{eqn-20-2}, we have
\begin{align} 
\nonumber
    \frac{\partial \widehat{G}}{\pt x_i} (s,x) &=
    - \frac{1}{2s}
    \sum_{j=1}^d (y_j(s,x)-x_j)(\pt_{x_i} y_j(s,x)- \delta_{ij})
    + H'(\langle y(s,x) \rangle) \sum_{j=1}^d \pt_{x_i} y_j(s,x) \\
\nonumber
    &= - H'(\langle y(s,x) \rangle) \sum_{j=1}^d (\pt_{x_i} y_j(s,x)- \delta_{ij}) + H'(\langle y(s,x) \rangle) \sum_{j=1}^d \pt_{x_i} y_j(s,x) \\
\nonumber
    &= H'(\langle y(s,x) \rangle)
    \\
\nonumber
    &= \frac{\langle y(s,x) \rangle - \langle x \rangle}{2sd}
    \\
\nonumber
&= \frac{y_i(s,x) - x_i}{2s}
\\
    \label{eqn-21}
    &= \sum_{k=1}^\infty (2sd)^{k-1} y_k(x) ,
\end{align} 
where \eqref{eqn-21} is known as the Lax--Oleinik formula
\cite[\S~3.4.2]{Eva10},
and
\begin{align*}
      \frac{\partial \widehat{G}}{\pt s} (s,x) & = - \frac{1}{2s} \sum_{i=1}^d (y_i(s,x)-x_i) \pt_s y_i(s,x)
 + \frac{|y(s,x)-x|^2}{4s^2} + H'(\langle y(s,x) \rangle) \sum_{j=1}^d \pt_s y_j(s,x) \\
    &= \frac{|y(s,x)-x|^2}{4s^2}.
\end{align*}
 These prove that the bivariate generating function $\widehat{G}$
 in \eqref{bi-gf} solves the Hamilton--Jacobi equation~\eqref{HJ}.

 \smallskip

 \noindent
  $ii)$
  It follows from \eqref{eqn-21} that for any $\alpha\in\N_0^d$ with $\alpha_i\ne 0$ for some $i=1,\ldots,d$, we have 
  \begin{equation}
    \nonumber %
  \frac{\pt^\alpha \widehat{G}}{\pt x^\alpha} (s,x)
  = \sum_{l=1}^\infty  (2sd)^{l-1} \pt^{\alpha-\ind_i} y_l(x), 
\end{equation}
 hence 
\begin{align} 
  \nonumber %
  \widehat{A}_k(\alpha) &=
 \frac{1}{k!\alpha!}
  \frac{\pt^k \pt^\alpha \widehat{G}}{\pt s^k\pt x^\alpha} (0,0)
 \\
  \nonumber
  &
 =\frac{(2d)^k}{\alpha!} \pt^{\alpha-\ind_i} y_{k+1}(0)
  \\
  \nonumber
  &
  = \frac{(2d)^k}{\alpha!(k+1)!}
  \frac{\partial^{k+|\alpha|-1}}{\partial z^{k+|\alpha|-1}}\left[ H'(z) \right]^{k+1}_{\mid z = 0} 
  \\
  \nonumber
  &= (2d)^k \frac{(k+|\alpha|-1)!}{\alpha!(k+1)!} \sum_{
      m_1+\cdots m_{k+1} = k+|\alpha|-1 \atop 
      m_1 , \ldots , m_{k+1} \geq 0
  } \prod_{j=1}^{k+1}
  \big(
  (1 + m_j ) h(1 + m_j )
  \big),
  \quad   k\geq 0,
\end{align} 
 which yields \eqref{A-hat-0}. 
\end{Proof}
\subsection{Factorial and exponential growth for terminal data}
\label{fjklfd9211} 
\begin{lemma}[Factorial growth]
\label{fjlkdf324}
 Let $r,\theta>0$ and 
\begin{equation*}
 g (\alpha) =
 g_{\theta , r} (\alpha) :=
  \frac{\theta^{|\alpha|}}{\alpha!}
  \prod_{l=0}^{|\alpha|-1} ( r + l)
  , %
  \quad \alpha\in\N_0^d, 
\end{equation*}
 and let 
$\big( \widehat{A}_k ( \alpha ) \big)_{(\alpha , k ) \in \N_0^d \times \N_0}$
 be the sequence defined in \eqref{eqn-13-2}-\eqref{eqn-13-2-0}. 
 \begin{enumerate}[i)]
\item
  The sequence
$\big( \widehat{A}_k ( \alpha ) \big)_{(\alpha , k ) \in \N_0^d \times \N_0}$
  satisfies \eqref{dom-asmp}
  provided that \eqref{*} holds. %
  \item
  For any $\alpha \in \N_0^d$, the generating function 
\begin{equation*}
 \widehat{G}_\alpha (s) := \sum_{k=0}^\infty \widehat{A}_k(\alpha) s^k
\end{equation*}
 has convergence radius %
 \begin{equation}
    \nonumber %
    R_{\theta,r} := \frac{(r+1)^{r+1}}{2\theta^2 r (r+2)^{r+2}d}. %
\end{equation}
\item 
 We have the bounds  
\begin{equation}
\nonumber %
   \widehat{G}_{\mathbf{0}}(|s|) < \frac{1}{2}\left( \frac{r+2}{r+1} \right)^{r+1},
   \quad |s| < R_{\theta,r}, 
\end{equation}
  and 
  \begin{equation}
\nonumber %
    \widehat{G}_\alpha(|s|) \lesssim_{\theta,r,d}
    \frac{(2\theta d)^{|\alpha|}}{2^{-(r+2)}R_{\theta,r}-|s|},
    \quad
    |s| < \frac{R_{\theta,r}}{2^{r+2}},
    \quad
  |\alpha|\geq 1.
 \end{equation}
\end{enumerate}
\end{lemma}
\begin{proof}
\noindent
 $i)$
 We have
\begin{equation*}
  G_0(x) =
  \sum_{\alpha\geq \mathbf{0}}
  \frac{\theta^{|\alpha|}}{\alpha!} x^\alpha
  \prod_{l=0}^{|\alpha|-1} ( r + l)
  = \frac{1}{(1-\theta\langle x \rangle)^{r}} := H(\langle x \rangle), 
\end{equation*}
 and it follows from \eqref{A-hat-0} that  
 \begin{align}
   \nonumber 
    \widehat{A}_k(\alpha) &= \frac{(2d)^k}{\alpha!(k+1)!} \lim _{z \rightarrow 0} \frac{d^{k+|\alpha|-1}}{d z^{k+|\alpha|-1}}\left[ H'(z) \right]^{k+1}
    \\
    \nonumber
    &
    = (2d)^k \frac{(\theta r)^{k+1}}{\alpha!(k+1)!} \lim _{z \rightarrow 0} \frac{d^{k+|\alpha|-1}}{d z^{k+|\alpha|-1}} (1-\theta z)^{-(r+1)(k+1)}
    \\
    \nonumber
    &=
    (2d)^k \frac{(\theta r)^{k+1}}{\alpha!(k+1)!}
    (
    (r+1)(k+1)
  \theta )^{k+|\alpha|-1}
\\
\nonumber %
    &
    = (2d)^k r^{k+1} \frac{\theta^{2k+|\alpha|}}{\alpha!} \frac{\Gamma((r+2)k+r+|\alpha|)}{(k+1)!\Gamma((r+1)(k+1))},
    \quad
  |\alpha |\geq 1, \ k\geq 0, 
\end{align}
 hence
 \begin{equation*}
  (1 + \alpha_i )
  \frac{\widehat{A}_k ( \alpha+\ind_i )}{\widehat{A}_k ( \alpha )}
   = ( (r+2)k+r+|\alpha| ) \theta \geq r \theta,
\end{equation*}
 and $\big( \widehat{A}_k ( \alpha ) \big)_{(\alpha , k ) \in \N_0^d \times \N_0}$
 satisfies \eqref{dom-asmp}.

 \smallskip

 \noindent $ii)$
 Next, given that $\Gamma(x+r) / \Gamma(x) = O(x^r)$ as
 $x$ tends to infinity, the convergence radius of
 $\widehat{G}_\alpha$ is %
\begin{equation}
  \nonumber %
  \begin{split}
    R_{\theta,r} &:= \limsup_{k\to\infty} \left| \frac{\widehat{A}_{k-1}(\alpha )}{\widehat{A}_k(\alpha )} \right|
    \\
    &
    = \frac{1}{2\theta^2 r d}
    \limsup_{k\to\infty} \frac{\Gamma((r+2)k-2+|\alpha|)}{\Gamma((r+1)k)} \frac{(k+1)\Gamma((r+1)(k+1))}{\Gamma((r+2)k+r+|\alpha|)}
    \\
    &= \frac{(r+1)^{r+1}}{2\theta^2 r (r+2)^{r+2}d}. %
  \end{split}
\end{equation}
\noindent
  $iii)$
 By \eqref{bi-gf}, we have 
\begin{align*}
  \widehat{G}_{\mathbf{0}}(s) & = \widehat{G}(s,0)
  \\
   & = -\frac{\langle y(s,0) \rangle}{2} H'(\langle y(s,0) \rangle )
  + H(\langle y(s,0) \rangle)
  \\
   & = \frac{2-\theta (r+2) \langle y(s,0) \rangle}{2 (1-\theta \langle y(s,0) \rangle)^{r+1}}.
\end{align*}
Letting $s\in (0 , R_{\theta,r} )$, since $\langle y(s,0) \rangle$ is the real solution of $(1-\theta y)^{r+1} y = 2 \theta r sd$ that is closer to $0$, one can deduce that
$$
0<\langle y(s,0) \rangle< \frac{1}{\theta(r+2)}.
$$ %
This implies
$$
1< \widehat{G}_0(s) < \frac{1}{2}\left( \frac{r+2}{r+1} \right)^{r+1}.
$$ 
 On the other hand, from Stirling's formula $\Gamma(z) \sim \sqrt{2 \pi z} ({z} / {e} )^z$ we have 
\begin{equation*}
  y_{k+1}(0) = \frac{\Gamma((r+2)k+r+1)}{(k+1)!\Gamma((r+1)(k+1))} \theta^{2k+1} r^{k+1} 
  \lesssim_{\theta,r} \frac{1}{(2d R_{\theta,r})^{k+1}},
  \qquad
 k\geq 0.
\end{equation*}
 Hence, by the inequalities
\begin{equation*}
  \binom{s}{k} \leq (\lfloor s \rfloor+1) \binom{\lfloor s \rfloor}{k} < \frac{(s+1) 2^{s+1}}{\sqrt{\pi \max ( 1 , s-1 ) }},
  \quad k \in \N_0, \ s\geq k, %
\end{equation*}
and 
\begin{equation}\label{eqn-29}
  \binom{|\alpha|}{\alpha} < \frac{\sqrt 2 e^dd^{|\alpha|+{d} / {2}}}{(2\pi |\alpha|)^{(d-1)/2}}
  ,
  \quad \alpha \in\N_0^d,\   |\alpha|\geq 1,  
\end{equation}
 we get 
\begin{equation*}
  \begin{split}
    \widehat{A}_k (\alpha ) &= (2d)^k \theta^{|\alpha|-1} \frac{(|\alpha|-1)!}{\alpha!} \binom{(r+2)k+r+|\alpha|-1}{|\alpha|-1} y_{k+1}(0) \\
    &\lesssim_{\theta,r} (2d)^k \theta^{|\alpha|-1} \frac{(|\alpha|-1)!}{\alpha!}
    \frac{2^{(r+2)k+r+|\alpha|-1} }{(2d R_{\theta,r})^{k+1}} \\
    &\lesssim_{\theta,r,d} \left( \frac{2^{r+2}}{R_{\theta,r}} \right)^{k+1} \binom{|\alpha|}{\alpha} (2\theta)^{|\alpha|}
    \\
    &
    \lesssim_{\theta,r,d} \left( \frac{2^{r+2}}{R_{\theta,r}} \right)^{k+1} (2\theta d)^{|\alpha|},
    \quad
  |\alpha | \geq 1, \ k\geq 0,
  \end{split}
\end{equation*}
 which yields 
\begin{equation*}
  \widehat{G}_\alpha (|s|) = \sum_{k=0}^\infty \widehat{A}_k ( \alpha ) |s|^k \lesssim_{\theta,r,d} \frac{(2\theta d)^{|\alpha|}}{2^{-(r+2)} R_{\theta,r}-|s|}, \qquad
  |\alpha|\geq 1.
\end{equation*}
\end{proof}
\begin{lemma}[Exponential growth]
\label{fjlkdf324-2}
 Let $\theta>0$ %
 and
\begin{equation*}
 g ( \alpha )
 = g_\theta ( \alpha )
 := \frac{\theta^{|\alpha|}}{\alpha!}, \quad \alpha\in\N_0^d, 
\end{equation*}
 and let 
 $\big( \widehat{A}_k ( \alpha ) \big)_{(\alpha , k ) \in \N_0^d \times \N_0}$
 be the sequence defined in \eqref{eqn-13-2}-\eqref{eqn-13-2-0}. 
\begin{enumerate}[i)]
\item
 The sequence
 $\big( \widehat{A}_k ( \alpha ) \big)_{(\alpha , k ) \in \N_0^d \times \N_0}$
 satisfies \eqref{dom-asmp}
 provided that \eqref{cond-theta-2-0} holds. %
 \item 
 For any $\alpha \in \N_0^d$
 the generating function 
\begin{equation*}
  \widehat{G}_\alpha (s) := \sum_{k=0}^\infty
   \widehat{A}_k ( \alpha ) s^k 
\end{equation*}
 has convergence radius 
\begin{equation*}
 R_\theta := \frac{1}{2e\theta^2 d}.
\end{equation*}
\item 
  For %
  $|s| < R_\theta$ %
        we have the bounds 
\begin{equation}
  \label{hklf434}
    \widehat{G}_{\mathbf{0}}(s) < \frac{e}{2}
    \quad
    \mbox{and} \quad 
    \widehat{G}_\alpha (|s|) \lesssim_{\theta,d} \left( \frac{d}{\log R_\theta - \log |s| } \right)^{|\alpha|-1}, \quad |\alpha| \geq 1. 
\end{equation}
\end{enumerate}
\end{lemma}
\begin{Proof}
 \noindent
 $i)$ 
 We have
\begin{equation*}
  G_0(x) = \sum_{\alpha \in\N_0^d} \frac{\theta^{|\alpha|}}{\alpha!} x^\alpha = e^{\theta (x_1+ \cdots +x_d)} = e^{\theta \langle x \rangle}.
\end{equation*}
It follows from \eqref{A-hat-0} that for $|\alpha |\geq 1$ and $k\geq 0$,
\begin{align}
 \nonumber 
  \widehat{A}_k ( \alpha ) & = \frac{(2d)^k}{\alpha!} \pt^{\alpha-\ind_i} y_{k+1}(0)
  \\
  \nonumber
  & = \frac{(2d)^k \theta^{k+1}}{\alpha!(k+1)!}
  \frac{\partial^{k+|\alpha|-1}}{\partial z^{k+|\alpha|-1}} e^{(k+1) \theta z}_{\mid z = 0} 
  \\
  \label{A-hat-formula-2}
  & = (2d)^k \frac{\theta^{2k+|\alpha|}}{\alpha!k!} (k+1)^{k+|\alpha|-2}, \quad k\geq 0,  
\end{align} 
 hence
\begin{equation*}
  (1 + \alpha_i )
  \frac{\widehat{A}_k(\alpha+\ind_i)}{\widehat{A}_k(\alpha)} = (k+1) \theta \geq \theta
\end{equation*}
 and 
 $\big( \widehat{A}_k (\alpha) \big)_{( \alpha , k ) \in\N_0^d \times \N_0}$
 satisfies \eqref{dom-asmp}.

 \smallskip

 \noindent
  $ii)$
 The convergence radius of $\widehat{G}_\alpha$ is
\begin{equation*}
 R_\theta := \limsup_{k\to\infty} \left| \frac{\widehat{A}_{k-1} ( \alpha )}{\widehat{A}_k ( \alpha )} \right|
 = \frac{1}{2\theta^2 d} 
 \limsup_{k\to\infty} \left( \frac{k}{k+1} \right)^{k+|\alpha|-2} = \frac{1}{2e\theta^2 d}.
\end{equation*}
 $iii)$
 By \eqref{bi-gf}, we have 
\begin{equation*}
  \widehat{G}_{\mathbf{0}} (s) = \widehat{G}(s,0) =
  \left(
  1 - \frac{\theta}{2} \langle y(s,0) 
  \right) 
   e^{\theta \langle y(s,0) \rangle}.
\end{equation*}
Let $s \in ( 0 , {1} / ( {2e\theta^2 d} ) )$. Since $\langle y \rangle = \langle y(s,0) \rangle$ is the
real solution of $\langle y \rangle e^{-\theta \langle y \rangle} = 2\theta sd$
 closest to $0$ we have
 $0<\langle y(s,0) \rangle< {1} / {\theta}$,
 hence 
\begin{equation*}
  1< \widehat{G}_{\mathbf{0}}(s) < \frac{e}{2}.
\end{equation*}
By \eqref{A-hat-formula-2} and Stirling's formula, we have 
\begin{equation*}
  \widehat{A}_k(\alpha ) < \frac{(2d)^k e^{k+1}}{\sqrt{2 \pi} \alpha!} \theta^{2k+|\alpha|} (k+1)^{|\alpha|-2},
\end{equation*}
  thus
\begin{equation*}
  \widehat{G}_\alpha (|s|) = \sum_{k=0}^\infty \widehat{A}_k(\alpha) |s|^k < \frac{\theta^{|\alpha|-2}}{2d\sqrt{2 \pi} \alpha!|s|} \text{Li}_{(2-|\alpha|)} \left( \frac{|s|}{R_\theta} \right),
  \quad
  |\alpha | \geq 1,
\end{equation*}
where $\text{Li}$ is the polylogarithm function, which has the asymptotic behavior 
\begin{equation*}
  \text{Li}_{(2-|\alpha|)} \left( \frac{|s|}{R_\theta} \right) \sim (|\alpha|-2)! \left( \log \frac{R_\theta}{|s|} \right)^{1-|\alpha|}, \quad |\alpha|\to\infty.
\end{equation*}
 Using again \eqref{eqn-29} yields \eqref{hklf434}.
\end{Proof}
\subsection{Integrability of random functionals} 
\noindent 
We close this section with
the proofs of Propositions~\ref{integ-final-1}
and \ref{integ-final-2}, and 
 let
 $\widetilde{\sigma}_{\rm b} $, 
 $\widetilde{\sigma}_\circ$
 be defined in \eqref{fkl323} and \eqref{aaa1}-\eqref{aaa2}. 
\begin{Proofy} \hskip-0.14cm {\em of Proposition~\ref{integ-final-1}.}
 We note that Conditions~\eqref{*} and
 \eqref{cond-theta-2-0} are used
 for the application of Proposition~\ref{fjkldf324}
 as noted in Lemmas~\ref{fjlkdf324} and \ref{fjlkdf324-2}
 respectively, while
 Conditions~\eqref{*-2} and
 \eqref{cond-theta-2-0-2} are used
 for the application of Proposition~\ref{stoch-dom}. 
  By \eqref{eqn-28},
 Proposition~\ref{stoch-dom},
 Corollary~\ref{fjkldf324-2}
 and Lemma~\ref{fjlkdf324}, 
 for both $c = {\alpha!}^{-1} \pt^\alpha$ and $c = {\alpha!}^{-1} \pt^\alpha \circ f^{(j)}$, $j\geq 0$, 
 we have 
\begin{align*}
  \E \left[ \left| \mathcal{H}_{t,T}\big(\mathcal{X}_t^x(c)\big) \right| \right]  & \leq \E \big[ N^{\sigma_\circ , \sigma_{\rm b}}_{t, T}(c) \big]
  \\
   & 
\leq \E \big[ \widetilde{N}^{\widetilde{\sigma}_\circ , \widetilde{\sigma}_{\rm b}}_{t ,T} ( \alpha,\max ( j , 0)) \big], 
  \\
   & \leq \frac{
    e^{-\lambda(T-t)} }{\bar\rho(T) }
    \widehat{G}_\alpha
    \left( \frac{1- e^{-\lambda(T-t)} }{\rho_*(T)\bar\rho(T) } \right), 
\\ 
 & \lesssim_{\theta,r,d} \frac{(2\theta d)^{|\alpha|} e^{-\lambda(T-t)} /\bar{\rho}(T) }{2^{-(r+2)} R_{\theta,r}
    - ( 1- e^{-\lambda(T-t)} )  / (\bar{\rho}(T) \rho_*(T))
  }
\\
  &
  \lesssim_{\theta,r,d,\lambda,T} (2\theta d)^{|\alpha|}.
\end{align*}
\end{Proofy}
\noindent
\begin{Proofy} \hskip-0.14cm {\em of Proposition~\ref{integ-final-2}.}
  By \eqref{eqn-28},
  Proposition~\ref{stoch-dom}, 
 Corollary~\ref{fjkldf324-2}
 and Lemma~\ref{fjlkdf324-2}, 
 for both
 $c = {\alpha!}^{-1} \pt^\alpha$ and $c = {\alpha!}^{-1} \pt^\alpha \circ f^{(j)}$,
 $j\geq 0$,
 we have 
\begin{align*}
  \E \left[ \left| \mathcal{H}_{t,T}\big(\mathcal{X}_t^x(c)\big) \right| \right]
  & \leq \E \big[ N^{\sigma_\circ , \sigma_{\rm b}}_{t, T}(c) \big]
  \\
   & \leq \E \big[
    \widetilde{N}^{\widetilde{\sigma}_\circ, \widetilde{\sigma}_{\rm b}}_{t ,T} ( \alpha,
    \max ( j , 0 ) ) \big]
  \\
   & \leq \frac{
    e^{-\lambda(T-t)} }{\bar\rho(T) }
    \widehat{G}_\alpha
    \left( \frac{1- e^{-\lambda(T-t)} }{\rho_*(T)\bar\rho(T) } \right), 
\\ 
&
 \lesssim_{\theta,d,\lambda,T}
 \left( \frac{d}{\log
       \big( R_\theta \rho_*(T)\bar\rho(T) /
       (1- e^{-\lambda T} ) \big)} \right)^{|\alpha|-1}, \quad (t,x) \in [0,T]\times \R^d. 
\end{align*}
\end{Proofy}
\section{Branching representation of PDE systems} 
\label{s11}
\noindent
The goal of Proposition~\ref{unf-intg-2} below is
to provide sufficient conditions for the 
consistency condition $u_f = f(u_{\id})$ to be
satisfied in Theorem~\ref{main-visc}, %
which allows us to remove the existence assumption
for the solution of the PDE~\eqref{nl-heat} in Theorem~\ref{thm-1}. 
 To this end, we build a branching process representation for the
 solutions of the PDE system~\eqref{pde-system}.
\begin{definition}
 We let $\widehat{\C} (f)$ denote the code set 
\begin{align*}
 & \widehat{\C} (f) := \C_f
    \cup \widehat{\C}_\partial
    \cup \widehat{\C}_f
    \\
    &:= \left\{ \frac{1}{\alpha!} \pt^\alpha \circ f^{(j)} 
    \right\}_{( \alpha , j ) \in \N_0^d \times \N_0}
    \bigcup \left\{ \frac{\alpha'!}{(\alpha' + \alpha'')!} \pt^{\alpha''} u^{}_{\frac{\pt^{\alpha'}}{\alpha'!} } \right\}_{\alpha', \alpha''\in\N_0^d
      \atop \alpha'+ \alpha''> \mathbf{0}}
    \bigcup \left\{ \frac{\alpha'!}{(\alpha' + \alpha'')!} \pt^{\alpha''} u^{}_{\frac{\pt^{\alpha'} \circ f^{(j)}}{\alpha'!} } \right\}_{\alpha', \alpha''\in\N_0^d
      \atop j \in\N_0}.
\end{align*}
\end{definition}
\noindent 
  We also define a mechanism on the
  new code set $\widehat{\C}(f)$.
\begin{definition}
 Let 
  $$\widehat{\M}:\widehat{\C} (f) \to
 ( \R \times \widehat{\C} (f) )
\bigcup
( \R \times \widehat{\C} (f) \times \widehat{\C} (f) )
$$
 be the mechanism defined as follows. 
\begin{enumerate}[i)]
 \item
   For $\hat{c}\in \C_f$, let 
    \begin{align*}
   & \widehat{\M} \left( \frac{1}{\alpha!} \pt^\alpha \circ f^{(j)} \right) := \left\{ \left( 1,\ \frac{1}{(\alpha-\beta)!} \pt^{\alpha-\beta} u_f,\ \frac{1}{\beta!} \pt^\beta \circ f^{(j+1)} \right), \right. \\
  & \left.
      \left( -\frac{1}{2} ( 1 + \beta_i )( 1 + \alpha_i-\beta_i ),\ \frac{1}{(\alpha-\beta+\ind_i)!} \pt^{\alpha-\beta+\ind_i} u_{\id} ,\ \frac{1}{(\beta+\ind_i)!} \pt^{\beta+\ind_i} \circ f^{(j+1)} \right)\right\}_{\mathbf{0}\leq \beta\leq \alpha
        \atop i=1,\ldots,d}.
    \end{align*}
  \item
    For $\hat{c}\in \widehat{\C}_\partial$, let 
    \begin{align*}
  &   \widehat{\M} \left( \frac{\alpha'!}{(\alpha' + \alpha'')!} \pt^{\alpha''} u^{}_{\frac{\pt^{\alpha'}}{\alpha'!} } \right) := \left\{ \left( 1,\ \frac{\alpha'!}{(\alpha' + \alpha'')!} \pt^{\alpha''} u^{}_{\frac{\pt^{\alpha'} \circ f}{\alpha'!} } \right) \right\}. 
 \end{align*}
  \item
For $\hat{c}\in \widehat{\C}_f$, let 
    \begin{align*}
 &  \widehat{\M} \left( \frac{\alpha'!}{(\alpha' + \alpha'')!} \pt^{\alpha''} u^{}_{\frac{\pt^{\alpha'} \circ f^{(j)}}{\alpha'!} } \right) := \left\{ \left( \frac{\binom{\alpha-\beta}{\alpha'-\beta'} \binom{\beta}{\beta'}}{\binom{\alpha}{\alpha'}},\ \frac{(\alpha'-\beta')!}{(\alpha-\beta)!} \pt^{\alpha''-\beta''} u^{}_{\frac{\pt^{\alpha'-\beta'} \circ f}{(\alpha'-\beta')!} },\ \frac{\beta'!}{\beta!} \pt^{\beta''} u_{\frac{\pt^{\beta'} \circ f^{(j+1)}}{\beta'!} } \right), \right. \\
    & \left( -\frac{( 1 + \beta_i )( 1 + \alpha_i-\beta_i ) \binom{\alpha-\beta}{\alpha'-\beta'} \binom{\beta}{\beta'}}{2\binom{\alpha}{\alpha'}} ,\ \frac{(\alpha'-\beta'+\ind_i)!}{(\alpha-\beta+\ind_i)!} \pt^{\alpha''-\beta''} u^{}_{\frac{\pt^{\alpha'-\beta'+\ind_i}}{(\alpha'-\beta'+\ind_i)!} } ,\ \right. \\
      & \qquad
      \quad
      \qquad \qquad \qquad \qquad \qquad \qquad \qquad \qquad \qquad 
   \frac{(\beta'+\ind_i)!}{(\beta+\ind_i)!} \pt^{\beta''} u^{}_{\frac{\pt^{\beta'+\ind_i} \circ f^{(j+1)}}{(\beta'+\ind_i)!} } \Bigg)\Bigg\}_{{
    \alpha = \alpha'+\alpha'', \beta = \beta'+\beta'', \atop \mathbf{0}\leq \beta'\leq \alpha', \mathbf{0}\leq \beta''\leq \alpha''} \atop i=1,\ldots,d}.
\end{align*}
\end{enumerate}
\end{definition}
 \noindent 
 We let $\iota : \widehat{\C}(f) \to \C(f)$
 denote the natural projection defined by 
 $$
\begin{cases}
  \displaystyle
  \iota \left( \frac{1}{\alpha!} \pt^\alpha \circ f^{(j)} \right)
  : = \frac{1}{\alpha!} \pt^\alpha \circ f^{(j)}, 
 & 
  \medskip
  \\
  \displaystyle
 \iota \left( \frac{\alpha'!}{(\alpha' + \alpha'')!} \pt^{\alpha''} u^{}_{\frac{\pt^{\alpha'}}{\alpha'!} } \right) := \frac{1}{(\alpha' + \alpha'')!} \pt^{\alpha' + \alpha''},
 & 
  \medskip
    \\
  \displaystyle
    \iota \left( \frac{\alpha'!}{(\alpha' + \alpha'')!} \pt^{\alpha''} u^{}_{\frac{\pt^{\alpha'} \circ f^{(j)}}{\alpha'!} } \right) := \frac{1}{(\alpha' + \alpha'')!} \pt^{\alpha' + \alpha''} \circ f^{(j)},
  & 
  \end{cases}
$$ 
and note that the restriction of $\iota$ to $\C_f$ is the identity
map.

\medskip

The proof of the following lemma
proceeds similarly to that of
Lemma~\ref{classical-sol}.
\begin{lemma}
\label{classical-sol-2}
 Assume that the PDE system~\eqref{pde-system} admits
 a classical solution $\{u_c\}_{c\in\C(f)} \subset {\cal C}^{1,\infty}_b([0,T]\times \R^d)$.
 Then, the family
 $\big( \widehat{u}_{\hat{c}}\big)_{\hat{c} \in \widehat{\C}(f)}
 $
 defined as
 $$
 \widehat{u}_{\hat{c}}
 :=
  \begin{cases}
    \hat{c}(u_\id), &\hat{c}\in \C_f, \\
    \hat{c}, &\hat{c}\in \widehat{\C}_f \cup \widehat{\C}_\partial , 
  \end{cases}
$$ 
 is a classical solution of the PDE system 
 \begin{equation}
   \label{pde-system-consist}
   \left(\frac{\partial}{\partial t} + \frac{1}{2} \Delta
   \right) \widehat{u}_{\hat{c}} + \sum_{\widehat{z} \in \widehat{\mathcal{M}}(\hat{c})} \widehat{z}_1 \prod_{i=2}^{|\widehat{z}|} \widehat{u}_{\widehat{z}_i} =0, \quad \widehat{u}_{\hat{c}}(T) = \iota(\hat{c})(\phi ). 
\end{equation}
\end{lemma}
\begin{Proof}
 For any test function $F\in {\cal C}^\infty(\R )$   
 we apply the operator $\displaystyle
 \frac{\partial}{\partial t} +\frac{1}{2} \Delta$
 to $F(u_{\id})$, and use 
\begin{equation*}
 \frac{\partial u_{\id}}{\partial t}  + \frac{1}{2} \Delta u_{\id} + u_f =0
\end{equation*}
 with $u_{\id}\in {\cal C}_b^{1,\infty}$, to get 
\begin{align*}
  \left( \frac{\partial}{\partial t} +\frac{1}{2} \Delta \right) F(u_\id) & = F' (u_\id) \left(
  \frac{\partial u_{\id}}{\partial t}
  +\frac{1}{2} \Delta u_\id \right) + \frac{1}{2} \sum_{i=1}^d \pt_i[F' (u_\id)] \pt_i u_\id
  \\
   & = - F' (u_\id) u_f + \frac{1}{2} \sum_{i=1}^d \pt_i[F' (u_\id)] \pt_i u_\id. 
\end{align*}
 Also, $\pt^\alpha [F(u_{\id})] \in {\cal C}_b^{1,\infty}$ and
\begin{align*} 
  \left( \frac{\partial}{\partial t} +\frac{1}{2} \Delta \right) \pt^\alpha [F (u_\id)] & = - \sum_{\mathbf{0}\leq \beta\leq \alpha} \binom{\alpha}{\beta} \pt^\beta [F' (u_\id)] \pt^{\alpha-\beta} u_f
  \\
   & \quad + \frac{1}{2} \sum_{i=1}^d \sum_{\mathbf{0}\leq \beta\leq \alpha} \binom{\alpha}{\beta} \pt^{\beta+\ind_i} [F' (u_\id)] \pt^{\alpha-\beta+\ind_i} u_{\id}.
  \end{align*}
   Moreover, we have
\begin{equation*}
  \left( \frac{\partial}{\partial t} +\frac{1}{2} \Delta \right) \pt^{\alpha''} u^{}_{\frac{\pt^{\alpha'}}{\alpha'!} } = \pt^{\alpha''} \left( \frac{\partial}{\partial t} +\frac{1}{2} \Delta \right) u^{}_{\frac{\pt^{\alpha'}}{\alpha'!} } = -\pt^{\alpha''} u^{}_{\frac{\pt^{\alpha'} \circ f}{\alpha'!} }.
\end{equation*}
and
\begin{align*}
  & \left( \frac{\partial}{\partial t} +\frac{1}{2} \Delta \right) \pt^{\alpha''} u^{}_{\frac{\pt^{\alpha'} \circ f^{(j)}}{\alpha'!} }
   = - \sum_{\mathbf{0}\leq \beta''\leq \alpha''} \sum_{\mathbf{0}\leq \beta'\leq \alpha'} \binom{\alpha''}{\beta''} \pt^{\beta''} u^{}_{\frac{\pt^{\beta'} \circ f^{(j+1)}}{\beta'!} } \pt^{\alpha''-\beta''} u^{}_{\frac{\pt^{\alpha'-\beta'} \circ f}{(\alpha'-\beta')!} } \\
   & \quad +
   \frac{1}{2}
   \sum_{i=1}^d \sum_{\mathbf{0}\leq \beta''\leq \alpha''} \sum_{\mathbf{0}\leq \beta'\leq \alpha'} ( 1 + \beta'_i )( 1 + \alpha'_i-\beta'_i ) \binom{\alpha''}{\beta''} \pt^{\beta''} u^{}_{\frac{\pt^{\beta'+\ind_i} \circ f^{(j+1)}}{(\beta'+\ind_i)!} } \pt^{\alpha''-\beta''} u^{}_{\frac{\pt^{\alpha'-\beta'+\ind_i}}{(\alpha'-\beta'+\ind_i)!} }.
\end{align*}
\end{Proof}
\noindent
In Proposition~\ref{unf-intg-2}
we obtain the probabilistic representation
of the derivatives of the solution 
of the PDE system~\eqref{pde-system} under 
 uniform integrability assumptions on random functionals. 
\begin{proposition}
   \label{unf-intg-2}
  Assume that
  \begin{itemize} %
  \item
    the PDE system~\eqref{pde-system-consist} admits a classical solution
    $\big( \widehat{u}_{\hat{c}}\big)_{\hat{c} \in \widehat{\C}(f)}
    \subset {\cal C}^{1,\infty}_b([0,T]\times \R^d)$,
  and %
  that
\item
  for each $(t,x)\in[0,T]\times\R^d$ and $c\in \C(f)$ the sequence 
  of random variables
  \begin{equation*}
    \widehat{\mathcal{H}}_{t,T,n}^{x,\hat{c}} := \prod_{{\mathbf{k}} \in \cup_{i=0}^n \mathcal{K}^{{\rm b},i}_{t,T}(c)} \frac{c^{{\mathbf{k}}} (\phi ) \big(X_T^{{\mathbf{k}}}\big)}{\bar\rho(T-T_{{\mathbf{k}}^{-}})} \prod_{{\mathbf{k}} \in \cup_{i=0}^n \mathcal{K}^{\circ,i}_{t,T}(c)} \frac{I_1(c^{{\mathbf{k}}})}{\rho (\tau_{\mathbf{k}}) q_{c^{{\mathbf{k}}}}(I(c^{{\mathbf{k}}}))} \prod_{{\mathbf{k}} \in \mathcal{K}^{n+1}_{t,T}(c)} \widehat{u}_{\hat{c}^{{\mathbf{k}}}}
    \big(T_{{\mathbf{k}}^{-}}, X_{T_{{\mathbf{k}}^{-}}}^{{\mathbf{k}}}\big) 
  \end{equation*}
  is uniformly integrable,
  where
  \begin{equation*}
    \hat{c} :=
    \begin{cases}
      c, & c\in \C_f,
      \medskip
      \\
      \displaystyle
      \frac{1}{\alpha!} \pt^\alpha u_\id, &
      \displaystyle
      c = \frac{1}{\alpha!} \pt^\alpha \in \C_\partial.
    \end{cases}
  \end{equation*}
  \end{itemize} %
   Then, we have
$$
  c(u_\id(t,x)) = u_c (t,x)
  = \E \left[ \mathcal{H}_{t,T}(
  \mathcal{X}_t^x(c)) \right],
  \quad
  (t,x)\in[0,T]\times\R^d,
  \ c\in\C(f).
  $$ 
 In particular, taking $c := f$ yields
 the consistency condition %
 in Theorem~\ref{main-visc}, %
 i.e. 
$$
 f(u_\id(t,x)) =
 u_f (t,x) =
 \E \left[ \mathcal{H}_{t,T}(
    \mathcal{X}_t^x(f)) \right],
  \quad
  (t,x)\in[0,T]\times\R^d,
  $$ 
  and uniqueness of mild solutions holds for the PDE
  system~\eqref{pde-system-consist}. 
\end{proposition}
\begin{proof}
\noindent
$i)$
 We extend $\iota : \widehat{\C}(f) \to \C(f)$
 into a projection
 from $(\widehat{\C}(f), \widehat{\M})$ onto $(\C(f), \M)$,
 as follows. 
\begin{enumerate}[\textbullet]
\item
  For $\hat{c}\in \C_f$ %
  and
$\widehat{z} = (\widehat{z}_1, \widehat{z}_2, \widehat{z}_3)
\in \widehat{\M}(\hat{c})$, we let 
\begin{equation*}
  \iota(\widehat{z}) := (\widehat{z}_1, \iota(\widehat{z}_2),
  \widehat{z}_3 ).
\end{equation*}
  \item
For $\hat{c}\in \widehat{\C}_\partial $ and
$\widehat{z} = (\widehat{z}_1, \widehat{z}_2)
\in \widehat{\M}(\hat{c})$, we let 
\begin{equation*}
  \iota(\widehat{z}) := (\widehat{z}_1, \iota(\widehat{z}_2)).
\end{equation*}
\item
  For $\hat{c}\in \widehat{\C}_f$, and $\widehat{z}
= (\widehat{z}_1, \widehat{z}_2, \widehat{z}_3)
  \in \widehat{\M}(\hat{c})$, we let 
  \begin{align*}
    &
    \hskip-0.4cm
    \iota(\widehat{z})
  := ( \widehat{z}_1 , \iota(\widehat{z}_2), \iota ( \widehat{z}_3 ) )
  \\
  &
      \hskip-0.45cm
 = \begin{cases}
      \displaystyle
      \left( 1,\ \frac{1}{(\alpha-\beta)!} \pt^{\alpha-\beta} \circ f,\ \frac{1}{\beta!} \pt^\beta \circ f^{(j+1)} \right), &
      \displaystyle
      \widehat{z}_3 = \frac{\beta'!}{\beta!} \pt^{\beta''} u_{\frac{\pt^{\beta'} \circ f^{(j+1)}}{\beta'!}}
      \medskip
      \\
    \displaystyle
    \left( -\frac{1}{2} ( 1 + \beta_i )( 1 + \alpha_i-\beta_i ),\
    \frac{\pt^{\alpha-\beta+\ind_i}}{(\alpha-\beta+\ind_i)!} ,\
    \frac{\pt^{\beta+\ind_i} \circ f^{(j+1)} }{(\beta+\ind_i)!} \right), &
    \displaystyle
    \widehat{z}_3 = \frac{(\beta'+\ind_i)!}{(\beta+\ind_i)!} \pt^{\beta''} u^{}_{\frac{\pt^{\beta'+\ind_i} \circ f^{(j+1)}}{(\beta'+\ind_i)!} }.
\end{cases}
\end{align*}
\end{enumerate}
We note that
\begin{enumerate}[\textbullet]
\item
  when $\hat{c}\in \C_f \cup \widehat{\C}_\partial $, $z\in \M(\iota(\hat{c}))$
  uniquely determines a $\widehat{z}\in \widehat{\M}(\hat{c})$ such that $z = \iota(\widehat{z})$;
\item
  when $\hat{c} =
{\alpha!}^{-1}{\alpha'!} \pt^{\alpha''} u^{}_{{\alpha'!}^{-1} {\pt^{\alpha'} \circ f^{(j)}} } \in \widehat{\C}_f$ with $\alpha = \alpha' + \alpha''$, $\M(\iota(\hat{c})) = \M\big({\alpha!}^{-1} \pt^\alpha \circ f^{(j)}\big)$, we have
\begin{align*}
  & \iota^{-1}\left( 1,\ \frac{1}{(\alpha-\beta)!} \pt^{\alpha-\beta} \circ f,\ \frac{1}{\beta!} \pt^\beta \circ f^{(j+1)} \right)
  \\
  &
  \quad = \left\{ \left( \frac{\binom{\alpha-\beta}{\alpha'-\beta'} \binom{\beta}{\beta'}}{\binom{\alpha}{\alpha'}},\ \frac{(\alpha'-\beta')!}{(\alpha-\beta)!} \pt^{\alpha''-\beta''} u^{}_{\frac{\pt^{\alpha'-\beta'} \circ f}{(\alpha'-\beta')!} }, \right. \right.
  \frac{\beta'!}{\beta!} \pt^{\beta''} u_{\frac{\pt^{\beta'} \circ f^{(j+1)}}{\beta'!} } \Bigg) \Bigg\}
  _{\beta = \beta'+\beta''
    \atop {\mathbf{0}\leq \beta'\leq \alpha' \atop
      \mathbf{0}\leq \beta''\leq \alpha''}},
\end{align*}
 and
\begin{align*}
  & \iota^{-1}\left( -\frac{1}{2} ( 1 + \beta_i )( 1 + \alpha_i-\beta_i ),\ \frac{1}{(\alpha-\beta+\ind_i)!} \pt^{\alpha-\beta+\ind_i},\ \frac{1}{(\beta+\ind_i)!} \pt^{\beta+\ind_i} \circ f^{(j+1)} \right) \\
  & = \left\{ \left( -\frac{( 1 + \beta_i )( 1 + \alpha_i-\beta_i) \binom{\alpha-\beta}{\alpha'-\beta'} \binom{\beta}{\beta'}}{2\binom{\alpha}{\alpha'}} ,\ \frac{(\alpha'-\beta'+\ind_i)!}{(\alpha-\beta+\ind_i)!} \pt^{\alpha''-\beta''} u^{}_{\frac{\pt^{\alpha'-\beta'+\ind_i}}{(\alpha'-\beta'+\ind_i)!} } ,
\right. \right. 
  \\
  &
  \left.
  \left.
  \qquad
  \qquad
  \qquad\qquad\qquad\qquad\qquad\qquad\qquad
  \frac{(\beta'+\ind_i)!}{(\beta+\ind_i)!} \pt^{\beta''} u^{}_{\frac{\pt^{\beta'+\ind_i} \circ f^{(j+1)}}{(\beta'+\ind_i)!} } \right)\right\}_{
    {
      \alpha = \alpha'+\alpha'', \beta = \beta'+\beta''
      \atop \mathbf{0}\leq \beta'\leq \alpha'
      \mathbf{0}\leq \beta''\leq \alpha''}
        \atop i=1,\ldots,d}. 
  \end{align*}
\end{enumerate}
\noindent
$ii)$
 Next, we recode the original branching tree $\mathcal{X}_t^x(c)$
using the coding mechanism $(\widehat{\C}(f), \widehat{\M})$,
as follows. Let
 $\hat{c}\in \widehat{\C}(f)$ with $c = \iota(\hat{c})$ be the initial code. %
\begin{itemize}
\item If $\hat{c} \in \C_f \cup \widehat{\C}_\partial $, the initial
   branch has $|I(c)|-1$ offsprings coded originally by $I_{k+1}(c)$, $k=1,\ldots,|I(c)|-1$, where $I(c)$ is a random variable in $\M(c)$ with probability $q_c$. As $I(c)$ can uniquely determine a random variable $\hat{I}(\hat{c})$ in $\widehat{\M}(\hat{c})$ with $I(c) = \iota(\hat{I}(\hat{c}))$, we now recode the offsprings by $\hat{I}_{k+1}(\hat{c})$, $k=1,\ldots,|I(c)|-1$. Thus, the new offspring distribution, denoted by $\widehat{q}_{\hat{c}}$, is given by $\widehat{q}_{\hat{c}} (\widehat{z}) = q_{\iota(\hat{c})} (\iota(\widehat{z}))$.
 \item If $\hat{c} = \frac{\alpha'!}{\alpha!} \pt^{\alpha''} u^{}_{\frac{1}{\alpha'!} \pt^{\alpha'} \circ f^{(j)}} \in \widehat{\C}_f$ with $\alpha = \alpha' + \alpha''$, since $c = \iota(\hat{c}) = \frac{1}{\alpha!} \pt^\alpha \circ f^{(j)} \in \C_f$, the initial
    branch has two offsprings coded originally by $I_{k+1}(c)$, $k=1,2$, where $I(c)$ is a random variable in $\M(\frac{1}{\alpha!} \pt^\alpha \circ f^{(j)})$ with probability $q_c$. We sample a new random variable $\hat{I}(\hat{c})$ in $\iota^{-1}(z)$ conditionally to $\{I(c)=z\}$, $z \in \M(\frac{1}{\alpha!} \pt^\alpha \circ f^{(j)})$, by putting
  \begin{equation*}
    \P\big( \hat{I}(\hat{c}) = \widehat{z} \mid I(c) = z \big) = \frac{\binom{\alpha-\beta}{\alpha'-\beta'} \binom{\beta}{\beta'}}{\binom{\alpha}{\alpha'}} = \frac{\binom{\alpha''}{\beta''} \binom{\alpha‘}{\beta’}}{\binom{\alpha}{\beta}},
  \end{equation*}
  for $z = \iota(\widehat{z})$ with
  $$
  z_1 = \frac{\binom{\alpha-\beta}{\alpha'-\beta'} \binom{\beta}{\beta'}}{\binom{\alpha}{\alpha'}}
  \quad
  \mbox{or}
  \quad
  z_1 = -\frac{( 1 + \beta_i )( 1 + \alpha_i-\beta_i) \binom{\alpha-\beta}{\alpha'-\beta'} \binom{\beta}{\beta'}}{2\binom{\alpha}{\alpha'}},
  $$
   where $\beta = \beta' + \beta''$. This is a well-defined conditioning probability since
   $$\sum_{
     \beta' + \beta'' = \beta \atop
            { \mathbf{0}\leq \beta'\leq \alpha'
              \atop
              \mathbf{0}\leq \beta''\leq \alpha'' }}
   \binom{\alpha''}{\beta''} \binom{\alpha'}{\beta'} = \binom{\alpha}{\beta}.$$
  Thus, the new offspring distribution, denoted by $\widehat{q}_{\hat{c}}$, is given by
\begin{equation*}
  \widehat{q}_{\hat{c}}(\widehat{z}) = \frac{\binom{\alpha''}{\beta''} \binom{\alpha'}{\beta'}}{\binom{\alpha}{\beta}} q_c(z).
\end{equation*}
  \item Repeat the above steps recursively.
\end{itemize}
The above recoding procedure yields a new coded branching tree
$\widehat{\mathcal{X}}_t^x (\hat{c})$
with coding mechanism $(\widehat{\C}(f), \widehat{\M})$ and new offspring distributions $(\widehat{q}_{\hat{c}}: \hat{c}\in \widehat{\C}(f))$,
which shares the branching structure of the original branching tree $\mathcal{X}_t^x(c)$, but with different codes. Denoting the new code borne by the branch ${\mathbf{k}}$ by $\hat{c}^{\mathbf{k}}$,
 the relation of the original and new codes is given by
\begin{equation}\label{branching-old-new}
  c^{\mathbf{k}} = \iota\big(\hat{c}^{\mathbf{k}}\big), 
\end{equation}
 and by construction we have 
\begin{equation}\label{eqn-18}
  \frac{\widehat{z}_1}{\widehat{q}_{\hat{c}}(\widehat{z})} = \frac{\iota(\widehat{z})_1}{q_{\iota(\hat{c})}(\iota(\widehat{z}))}.
\end{equation}
\noindent
$iii)$
 Finally, we apply the arguments of the proof of
 Proposition~\ref{unf-intg} by replacing the use of
 Lemma~\ref{classical-sol} by that of Lemma~\ref{classical-sol-2},
 under the uniform integrability of the sequence 
\begin{equation*}
\widehat{\mathcal{H}}_{t,T,n}^{x,\hat{c}} = \prod_{{\mathbf{k}} \in \cup_{i=0}^n \mathcal{K}^{{\rm b},i}_{t,T}(c)} \frac{\iota \big(\hat{c}^{{\mathbf{k}}}\big)(\phi ) \big(X_T^{{\mathbf{k}}}\big)}{\bar\rho(T-T_{{\mathbf{k}}^{-}})} \prod_{{\mathbf{k}} \in \cup_{i=0}^n \mathcal{K}^{\circ,i}_{t,T}(c)} \frac{\hat{I}_1(\hat{c}^{\mathbf{k}})}{\rho (\tau_{\mathbf{k}}) \widehat{q}_{\hat{c}^{{\mathbf{k}}}}\big(\hat{I}(\hat{c}^{\mathbf{k}})\big)} \prod_{{\mathbf{k}} \in \mathcal{K}^{n+1}_{t,T}(c)} \widehat{u}_{\hat{c}^{{\mathbf{k}}}} \big(T_{{\mathbf{k}}^{-}}, X_{T_{{\mathbf{k}}^{-}}}^{{\mathbf{k}}}\big),
\end{equation*}
where the above equality follows from
 \eqref{branching-old-new}-\eqref{eqn-18}.
\end{proof}

\appendix

\section{Appendix}
\noindent
This appendix contains auxiliary lemmas which admit
independent proofs, as well as remarks of independent
interest.
\subsubsection*{Auxiliary lemmas}
\noindent 
 In Lemma~\ref{lA3} we check that 
 \eqref{offsp-dist}-\eqref{offsp-dist-2} yield a probability distribution. 
\begin{lemma}
\label{lA3}
For all $c\in \C (f)$, the family $\{q_c(z)\}_{z\in \M(c)}$
defined in \eqref{offsp-dist}-\eqref{offsp-dist-2} satisfies 
$$
 \sum_{z\in \M(c)} q_c(z) = 1. 
$$
\end{lemma}
\begin{Proof}
  It is sufficient to consider the case
  $c = {\alpha!}^{-1} \pt^\alpha \circ f^{(j)}$.
  We have 
$$ 
\sum_{\mathbf{0}\leq \beta\leq \alpha} 1 = \sum_{\beta_1 = 0}^{\alpha_1} \cdots \sum_{\beta_d = 0}^{\alpha_d} 1 = \prod_{k=1}^d ( 1+ \alpha_k )
$$
and
\begin{align*}
\sum_{\mathbf{0}\leq \beta \leq \alpha} ( 1 + \beta_i)( 1+ \alpha_i-\beta_i )
& =
\sum_{l = 0}^{\alpha_i} ( 1+ l )( 1+ \alpha_i-l )
\prod_{k=1\atop k\ne i}^d ( 1+ \alpha_k ) 
\\
 & = \frac{(2+ \alpha_i )(3+ \alpha_i )}{6} \prod_{k=1}^d ( 1+ \alpha_k ),
\end{align*}
 where the latter follows from the generating function calculation 
\begin{align*}
  \sum_{m=0}^\infty w^m \sum_{k=0}^m (k+1) (m-k+1) 
  & = \left( \sum_{k=0}^\infty (k+1) w^k \right)^2
  \\
  &
  = \frac{1}{(1-w)^4}
  \\
   & = \sum_{k \geq 0} \binom{k+3}{k} w^k.
\end{align*}
\end{Proof}
\begin{lemma}
  \label{polylog-stirling}
  The polylogarithm function $\mathrm{Li}_s (z)$ satisfies 
  \begin{equation}
    \mathrm{Li}_{-m} \left(\frac{1}{2} \right)
    = 2 \sum_{k=1}^m k! S(m, k),
    \quad m \geq 1.
    \end{equation} 
\end{lemma}
\begin{Proof}
 From \cite[Eq.~(6.4)]{Woo92}, we have 
\begin{equation*}
  \mathrm{Li}_{-m} \left(\frac{1}{2} \right) = \sum_{k=0}^m k! S(m+1, k+1).
\end{equation*}
 Recall that the Stirling numbers of the second kind satisfy
\begin{equation*}
  S(m,1) = S(m,m) = 1, \quad m\geq 1, 
\end{equation*}
and the recursion
\begin{equation*}
  S(m+1,k) = kS(m,k) + S(m,k-1), \quad m\geq k\geq 2.
\end{equation*}
Thus, we have
  \begin{equation*}
    \begin{split}
      \mathrm{Li}_{-m} \left(\frac{1}{2} \right) &= S(m+1, 1) + m! S(m+1, m+1) + \sum_{k=1}^{m-1} \left[ (k+1)! S(m, k+1) + k! S(m, k) \right] \\
      &= 1 + m! + \sum_{k=2}^m k! S(m, k) + \sum_{k=1}^{m-1} k! S(m, k) \\
      &= 2 \sum_{k=1}^m k! S(m, k).
    \end{split}
  \end{equation*}
\end{Proof} 
\begin{lemma} %
\label{jklfds19a} 
The conditions
\begin{equation} 
\label{weight-asmpt-red-1}
 g(\alpha-\beta) g(\beta) \frac{\sigma_\circ^{(0)}(\alpha , j)}{g(\alpha)}
 \geq
 \frac{1}{\bar{\rho} (T)} %
\end{equation}
and
\begin{equation}  
\label{weight-asmpt-red-2}
  g(\alpha-\beta+\ind_i)
  g(\beta+\ind_i)
  \frac{\sigma_\circ^{(i)}(\alpha , j)}{g(\alpha)}
  \geq
 \frac{1}{\bar{\rho} (T)} 
, 
\end{equation} 
$\mathbf{0}\leq \beta\leq \alpha$, $\alpha\in\N_0^d$,
 $i=1,\ldots , d$, $j\geq 0$, are satisfied by
\begin{enumerate}[i)]
\item
 the function 
\begin{equation*}
  g(\alpha) = g_{\theta , r} (\alpha) :=
  \frac{\theta^{|\alpha|}}{\alpha!}
  \prod_{l=0}^{|\alpha|-1} ( r + l)
  , %
  \quad \alpha\in\N_0^d, 
\end{equation*}
in \eqref{gthetar},
provided that $r,\theta>0$ satisfy \eqref{*-2},
\item
 the function %
\begin{equation*}
  g(\alpha) = g_\theta (\alpha)
  := \frac{\theta^{|\alpha|}}{\alpha!}, \quad \alpha\in\N_0^d, 
\end{equation*}
in \eqref{gtheta},
provided that $\theta > 0$ satisfies \eqref{cond-theta-2-0-2}. 
\end{enumerate}
\end{lemma}
\begin{Proof}
$i)$
  When $\alpha \in \N_0^d$ is fixed, the functions 
$$
  \beta \mapsto g(\alpha-\beta) \frac{g(\beta)}{g(\alpha)}
  \quad
  \mbox{and}
  \quad
  \beta \mapsto g(\alpha-\beta+\ind_i) \frac{g(\beta+\ind_i)}{g(\alpha)}
$$
  achieve their respective
  minima $1$ and $\theta^2 {(|\alpha|+r)r} / {(1 + \alpha_i )}$
  at $\beta=\mathbf{0}$ and $\beta=\alpha$.   
  Thus, for any $\mathbf{0}\leq \beta\leq \alpha$ we have 
$$ 
  g(\alpha-\beta) \frac{g(\beta)}{g(\alpha)}
  \prod_{k=1}^d (1 + \alpha_k )
   \geq \prod_{k=1}^d (1 + \alpha_k )
   \geq 1, 
$$ 
 which shows \eqref{weight-asmpt-red-1} from \eqref{s2}. 
 We also have 
 \begin{align*} 
   &
   g(\alpha-\beta+\ind_i) \frac{g(\beta+\ind_i)}{g(\alpha)}
   \frac{(2 + \alpha_i )(3 + \alpha_i )}{12} 
   \prod_{k=1}^d (1 + \alpha_k ) \\
   & \quad \geq \frac{\theta^2}{12} r (|\alpha|+r) (2 + \alpha_i )(3+ \alpha_i ) \prod_{k=1\atop
   k \ne i
   }^d (1 + \alpha_k )
   \\
   & \quad
   \geq \frac{\theta^2}{2} r^2
      \\
   & \quad
    \geq \frac{1}{d+1}, 
\end{align*}
 which shows \eqref{weight-asmpt-red-2} from \eqref{s3}. 

 \smallskip
 
 \noindent
 $ii)$
 For any $\mathbf{0}\leq \beta\leq \alpha$, we have 
$$ 
g(\alpha-\beta) \frac{g(\beta)}{g(\alpha)}
\prod_{k=1}^d (1 + \alpha_k )
  = 
\binom{\alpha}{\beta}
\prod_{k=1}^d (1 + \alpha_k )
 \geq 1, 
$$ 
 which shows \eqref{weight-asmpt-red-1} from \eqref{s2}, and 
\begin{align*}
  & 
  g(\alpha-\beta+\ind_i) \frac{g(\beta+\ind_i)}{g(\alpha)}
    \frac{(2+ \alpha_i)(3+ \alpha_i)}{12} 
    \prod_{k=1}^d (1 + \alpha_k )
  \\
  & \quad = \frac{\theta^2}{12} (3 + \alpha_i)
  \binom{\alpha+2 \ind_i}{\beta+ \ind_i}
  \prod_{k=1\atop k\ne i}^d (1 + \alpha_k )
  \\
  & \quad 
  \geq \frac{\theta^2}{2}
    \\
  & \quad 
  \geq \frac{1}{d+1}, 
\end{align*}
 which shows \eqref{weight-asmpt-red-2} from \eqref{s3}. 
\end{Proof}
\noindent 
 Lemma~\ref{progeny} is a classical result which is consequence of 
 e.g. \cite[Eq. (8) page 3]{kendall1948},
 \cite[Example 13.2 page 112]{harris1963}, 
 or \cite[Example 5 page 109]{athreya}. % ,
% see also Proposition~3.1 in \cite{huangprivault1}. 
\begin{lemma} 
\label{progeny}
 The total progeny
 of the branching chain 
 $\widetilde{\cal X}_t$ 
 introduced in Section~\ref{s5}
 has distribution 
 \begin{equation}
 \label{recursion-total-sol}
        \P \big(
       \widetilde{X}_{t,T}
       = n \big)
  = 
  \begin{cases}
    e^{-\lambda (T-t)} \big(1-e^{-\lambda (T-t)}\big)^m, & n=2m+1, %
    \\
    0, & \text{otherwise}, \ m \geq 0, 
  \end{cases}
\end{equation}
 $t\in [0,T]$. 
\end{lemma}
\subsubsection*{Combinatorial remarks}
\noindent
 Next, we make some comments on the relationship
 between the above analysis and classical combinatorics. 
\begin{remark}
$i)$ Taking 
\begin{equation*}
 g (\alpha) =
 g_{\theta , r} (\alpha) :=
  \frac{\theta^{|\alpha|}}{\alpha!}
  \prod_{l=0}^{|\alpha|-1} ( r + l)
  , %
  \quad \alpha\in\N_0^d, 
\end{equation*}
 for $r,\theta>0$, the sequence
 $\big\{ \widehat{A}_k ( \alpha ) \big\}_{(\alpha , k ) \in \N_0^d \times \N_0}$
 defined in \eqref{eqn-13-2}-\eqref{eqn-13-2-0} satisfies
 the relation 
\begin{align*}
  \widehat{A}_k(\alpha ) & = -(2d)^k \theta^{2k+|\alpha|} r^{k+1} F_k(r+2, r+1)
  \\
   & = (-1)^k (2d)^k \theta^{2k+|\alpha|} r^{k+1} F_k(-(r+1),-(r+1))
\end{align*}
 for $|\alpha|=1$ and $k\geq 0$, 
 where $F_k(p, r)$ is the Fuss--Catalan number \cite{Mlo10} defined by
\begin{equation*}
  F_k(p, r)=\frac{r}{k}\binom{k p+r-1}{k-1},
 \quad p,k,r>0.
\end{equation*}
$ii)$ 
  Taking
\begin{equation*}
 g ( \alpha )
 = g_\theta ( \alpha )
 := \frac{\theta^{|\alpha|}}{\alpha!}, \quad \alpha\in\N_0^d, 
\end{equation*}
 and plugging the formula \eqref{A-hat-formula-2} for
 $\big\{ \widehat{A}_k ( \alpha ) \big\}_{( \alpha , k ) \in\N_0^d \times \N_0 }$
 defined in \eqref{eqn-13-2}-\eqref{eqn-13-2-0}
 back into the recursion \eqref{eqn-13-2}-\eqref{eqn-13-2-0}, 
 yields the combinatorial identity 
\begin{equation*}
  \sum_{l=0}^k \binom{k}{l} (l+1)^{l-1} (k-l+1)^{k-l-1} = 2(k+2)^{k-1}, 
\end{equation*}
 which can be verified using generating functions, as follows:
\begin{align*}
 \sum_{l=0}^\infty x^l \sum_{k=0}^l \frac{(k+1)^{k-1} (l-k+1)^{l-k-1}}{k!(l-k)!}
 &= \left( \sum_{k=0}^\infty \frac{(k+1)^{k-1}}{k!} x^k \right)^2
  \\
  &
  = \left( \frac{W(-x)}{x} \right)^2
  \\
  &
  = 2 \sum_{k=0}^{\infty} (k+2)^{k-1} \frac{x^k}{k!},
\end{align*}
where $W$ is the Lambert function.
\end{remark}

\section*{Conclusion}
\noindent
In this paper, we have have derived sufficient growth conditions on coefficients that ensure the validity of an extension of the classical Feynman--Kac representation to a class of semilinear parabolic PDE systems. This approach allows us to represent the solution of the PDE system as the expectation of a functional of a coded branching diffusion process, providing a probabilistic interpretation and computational method for solving such PDEs. Numerical experiments are presented in dimension up to 1000. 

\paragraph{Acknowledgement.}
The work of Q. Huang is supported by
the National Natural Science Foundation of China under Grant No. 12501241,
the Basic Research Program of Jiangsu under Grant No. BK20251280,
the Zhishan Young Scholar Program of Southeast University, 
the Start-Up Research Fund of Southeast University under Grant No. RF1028624194, and
the Jiangsu Provincial Scientific Research Center of Applied Mathematics under Grant No. BK20233002.

\footnotesize

\newcommand{\etalchar}[1]{$^{#1}$}
\def\cprime{$'$} \def\polhk#1{\setbox0=\hbox{#1}{\ooalign{\hidewidth
  \lower1.5ex\hbox{`}\hidewidth\crcr\unhbox0}}}
  \def\polhk#1{\setbox0=\hbox{#1}{\ooalign{\hidewidth
  \lower1.5ex\hbox{`}\hidewidth\crcr\unhbox0}}} \def\cprime{$'$}

\end{document}